\newcommand{\newcom}{\newcommand}
\newcom{\al}{\alpha}
\newcom{\be}{\beta}
\newcom{\eps}{\epsilon}
\newcom{\veps}{\varepsilon}
\newcom{\e}{\varepsilon}
\newcom{\ga}{\gamma}
\newcom{\Ga}{\Gamma}
\newcom{\ka}{\kappa}
\newcom{\Lam}{\Lambda}
\newcom{\lam}{\lambda}
\newcom{\Om}{\Omega}
\newcom{\om}{\omega}
\newcom{\Si}{\Sigma}
\newcom{\si}{\sigma}
\newcom{\tht}{\theta}
\newcom{\dtri}{\nabla}
\newcom{\tri}{\triangle}
\newcom{\oo}{\infty}
\newcom{\vphi}{\varphi}
\newcom{\cB}{{\mathcal B}}
\newcom{\cC}{{\mathcal C}}
\newcom{\cD}{{\mathcal D}}
\newcom{\cF}{{\mathcal F}}
\newcom{\cH}{{\mathcal H}}
\newcom{\cL}{{\mathcal L}}
\newcom{\cM}{{\mathcal M}}
\newcom{\cN}{{\mathcal N}}
\newcom{\cP}{{\mathcal P}}
\newcom{\cS}{{\mathcal S}}
\newcom{\cQ}{{\mathcal Q}}
\newcom{\cT}{{\mathcal T}}
\newcom{\cY}{{\mathcal Y}}
\newcom{\cZ}{{\mathcal Z}}
\newcom{\R}{\mathbb R}
\newcom{\T}{\mathbb T}
\newcom{\BT}{{\mathbb{T}^2}}
\newcom{\Z}{\mathbb Z}
\newcom{\C}{\mathbb C}
\newcom{\E}{\mathbb E}
\newcommand{\vc}[1]{{\bf #1}}
\newcom{\ve}{\vc{e}}
\newcom{\vN}{\vc{N}}
\newcom{\vn}{\vc{n}}
\newcom{\vG}{\vc{G}}
\newcom{\vF}{\vc{F}}
\newcom{\vf}{\vc{f}}
\newcom{\vg}{\vc{g}}
\newcom{\vq}{\vc{q}}
\newcom{\vu}{\vc{u}}
\newcom{\vv}{\vc{v}}
\newcom{\vw}{\vc{w}}
\newcom{\vb}{\vc{b}}
\newcom{\vh}{\vc{h}}
\newcom{\vz}{\vc{z}}
\newcom{\vup}{\vu^{+}}
\newcom{\vum}{\vu^{-}}
\newcom{\vvp}{\vv^{+}}
\newcom{\vvm}{\vv^{-}}
\newcom{\vbp}{\vb^{+}}
\newcom{\vbm}{\vb^{-}}
\newcom{\vhp}{\vh^{+}}
\newcom{\vhm}{\vh^{-}}
\newcom{\Omp}{{\Om^+}}
\newcom{\Omm}{{\Om^-}}
\newcom{\vupm}{{\vu^{\pm}}}
\newcom{\vvpm}{{\vv^{\pm}}}
\newcom{\vbpm}{{\vb^{\pm}}}
\newcom{\vhpm}{{\vh^{\pm}}}
\newcom{\vwp}{{\vc{w}^+}}
\newcom{\vwm}{{\vc{w}^-}}
\newcom{\vwpm}{{\vc{w}^{\pm}}}
\newcom{\Ompm}{{\Omega^{\pm}}}
\newcom{\vom}{\boldsymbol{\omega}}
\newcom{\vvap}{\boldsymbol{\varpi}}
\newcom{\vop}{\vom^{+}}
\newcom{\vnu}{\boldsymbol{\nu}}
\newcom{\vopm}{\vom^{\pm}}
\newcom{\vjp}{\vj^+}
\newcom{\vjm}{\vj^-}
\newcom{\vjpm}{\vj^{\pm}}
\newcom{\vj}{\boldsymbol{\xi}}
\newcom{\Ds} {\langle\nabla\rangle^{s-\f12}}
\newcommand{\dx}{{\rm d} {x}}
\newcom{\ds}{{\rm d} s}
\newcom{\f}{\frac}
\newcom{\di}{\displaystyle\int}
\newcom{\dl}{\displaystyle\lim}
\newcom{\ov}{\overline}
\newcom{\sset}{\subset}
\newcom{\wt}{\widetilde}
\newcom{\pa}{\partial}
\newcom{\p}{\partial}
\newcom\na{\nabla}
\newcom{\suml}{\sum\limits}
\newcom{\supl}{\sup\limits}
\newcom{\intl}{\int\limits}
\newcom{\infl}{\inf\limits}
\newcom{\disp}{\displaystyle}
\newcom{\non}{\nonumber}
\newcom{\no}{\noindent}
\newcom{\QED}{$\square$}
\def\div{\mathop{\rm div}\nolimits}
\def\curl{\mathop{\rm curl}\nolimits}
\def\eqdefa{\buildrel\hbox{\footnotesize def}\over =}
\newtheorem{athm}{\bf \t}[section]
\newenvironment{thm} [1] {\def\t{#1}\begin{athm} \bf \rm} {\end{athm}}
\newcom{\bthm}{\begin{thm}}\newcom{\ethm}{\end{thm}}
\newtheorem{theorem}{Theorem}[section]
\newtheorem{lemma}{Lemma}[section]
\newtheorem{remark}{Remark}[section]
\newtheorem{definition}{Definition}[section]
\newtheorem{proposition}{Proposition}[section]
\newcom{\beq}{\begin{equation}}
\newcom{\eeq}{\end{equation}}
\newcom{\ben}{\begin{eqnarray}}
\newcom{\een}{\end{eqnarray}}
\newcom{\beno}{\begin{eqnarray*}}
\newcom{\eeno}{\end{eqnarray*}}
\newcom{\bali}{\begin{aligned}}
\newcom{\eali}{\end{aligned}}
\numberwithin{equation}{section}
\begin{document}

\title[Nonlinear stability of current-vortex sheet]
{Nonlinear stability of current-vortex sheet to the incompressible MHD equations}

\author{Yongzhong Sun}
\address{Department of Mathematics, Nanjing University, 210093, Nanjing, P. R. China}
\email{sunyz@nju.edu.cn}

\author{Wei Wang}

\address{School of Mathematical Sciences, Zhejiang University, 310027, Hangzhou, P. R. China}
\email{wangw07@zju.edu.cn}

\author{Zhifei Zhang}
\address{School of Mathematical Sciences, Peking University, 100871, P. R. China}
\email{zfzhang@math.pku.edu.cn}

\date{\today}

\maketitle

\begin{abstract}
In this paper, we solve a long-standing open problem: nonlinear stability of current-vortex sheet in the ideal incompressible Magneto-Hydrodynamics under the linear stability condition.
This result gives a first rigorous confirmation of the stabilizing effect of the magnetic field on Kelvin-Helmholtz instability.
\end{abstract}

\section{Introduction}\label{prob}

\subsection{Presentation of the problem}

In this paper, we consider the idea incompressible
Magneto-Hydrodynamics(MHD). Let $\vu=(u^1, u^2, u^3)$ be the velocity
and $\vh=(h^1, h^2, h^3)$ be the magnetic field.
The incompressible MHD system reads as follows
\beq\label{mhd}\left\{\begin{split}
& \p_t \vu + \vu\cdot\nabla\vu - \vh\cdot\nabla\vh + \nabla p= 0, \\
& \div \vu = 0,\quad \div\vh=0, \\
& \p_t \vh + \vu\cdot\nabla\vh - \vh\cdot\nabla\vu = 0.
\end{split}\right.\eeq

We denote
\[
Q_{T} \triangleq \cup_{t\in (0,T)} \{t\} \times \Om_t\subset (0,\infty)\times \mathbb{R}^3.
\]
Let $\vu,\vh$ be a weak solution of the MHD system (\ref{mhd}). A current-vortex sheet is a moving surface $\Gamma(t)\subset Q_T$ such that for
\[
\Om_t=\Om^+_t\cup \Gamma(t) \cup \Om^-_t, \quad Q^{\pm}_T = \cup_{t\in (0,T)} \{t\} \times \Om^{\pm}_t,
\]
the solution
\[
\vupm := \vu|_{\Om^{\pm}_t}, \quad \vhpm := \vh|_{\Om^{\pm}_t}, \quad p^{\pm} := p|_{\Om^{\pm}_t}
\]
are smooth in $Q^{\pm}_T$ and satisfy
\beq\label{cvs}\left\{\begin{split}
& \p_t \vupm + \vupm\cdot\nabla\vupm - \vhpm\cdot\nabla\vhpm + \nabla p^{\pm} = 0\quad \text{ in }\quad Q^{\pm}_T,  \\
& \div \vupm = 0,\,\div \vhpm = 0\quad\text{ in }\quad Q^{\pm}_T,  \\
& \p_t \vhpm + \vupm\cdot\nabla\vhpm - \vhpm\cdot\nabla\vupm = 0\quad\text{ in }\quad Q^{\pm}_T,  \\
&\vupm\cdot\vn =V(t,x), \quad \vhpm\cdot\vn = 0\quad\text{ on }\quad\Gamma_t,
\end{split}\right.\eeq
with jump condition for the pressure
\begin{align}
[p]\eqdefa p^+-p^-=0\quad \text{on}\quad \Gamma_t.
\end{align}
Here $\vn$ is the outward unit normal to $\p\Om^-_t$ and $V(t,x)$ is the normal velocity of $\Gamma_t$.

The system \eqref{cvs} is  supplemented with initial data
\beq\label{cvs0}
\vupm(0,x) = \vu^\pm_{0}(x), \quad \vhpm(0,x)=\vh^\pm_{0}(x)\quad \text{in}\quad\Om^\pm_0,
\eeq
where the initial data satisfies
\beq\label{cvsi}
\left\{\begin{split}
&\div \vu^\pm_{0}=0, \quad\div\vh^\pm_{0}=0\quad\text{ in}\quad \Om^\pm_0,\\
&\vup_{0}\cdot\vn_0 = \vum_{0}\cdot\vn_0, \quad \vh^\pm_{0}\cdot\vn_0 = 0\quad\text{on}\quad\Gamma_0.
\end{split}\right.
\eeq

The system (\ref{cvs})-(\ref{cvs0}) is so called the current-vortex sheet problem. The goal of this paper is to
study the well-posedness of this system under suitable stability condition on the initial data.

For simplicity, we consider
\[
\Om = \mathbb{T}^2 \times (-1,1), \quad \Gamma_0=\big\{(x',x_3)| x_3 = f_0(x'), \, x'=(x_1,x_2)\in \mathbb{T}^2\big\},
\]
and $\Gamma(t)$ is a graph:
\[
\Gamma_t=\left\{x\in \Om | x_3=f(t,x'), x'=(x_1,x_2)\in \mathbb{T}^2\right\}
\]
such that
\[
\Om^{+}_t=\big\{ x \in \Om(t)| x_3 > f(t,x')\big\}, \quad \Om^{-}_t=\big\{ x \in \Om(t)| x_3 < f(t,x')\big\}.
\]

All functions(and vectors) are assumed to be periodic in $x'$. On the artificial boundary $\Gamma^{\pm}= \mathbb{T}^2 \times \{\pm 1\}$,
we impose the following boundary conditions on $\vupm,\vhpm$:
\beq\label{cvsb2}
u_3^{\pm} =0,\quad h_3^{\pm} = 0\quad\text{on}\quad \Gamma^{\pm}.
\eeq
Under this setting, the boundary condition on $\Gamma(t)$ in (\ref{cvs}) is transformed into
\beq\label{cvsb1}
[p]=0, \quad \vupm\cdot\vN = \p_t f, \quad \vhpm\cdot\vN = 0\quad\text{on}\quad\Gamma_t,
\eeq
where
\[
\vN=(-\p_1f, -\p_2 f, 1), \quad \vn=\frac{\vN}{|\vN|}.
\]

Let us remark that the divergence free restriction on $\vhpm$ is automatically satisfied if $\div\vh^\pm_0=0$, because of
\[
\p_t\div\vhpm + \vupm\cdot\nabla\div\vhpm=0.
\]
Similar argument can be also applied to yield $\vhpm\cdot\vN = 0$ if $\vh^\pm_{0}\cdot\vn_0 = 0$.

\subsection{Backgrounds}

A velocity discontinuity in an inviscid flow is called a vortex sheet. A vortex sheet has vorticity concentrated as a measure(delta function)
in a set of codimension one, a curve or a surface for two dimensional flow or three dimensional flow respectively.
For the 2-D incompressible Euler equations, the evolution of the vortex sheet can be described by Birkhoff-Rott(BR) equation.
The linear stability analysis of BR equation shows that the $k$th Fourier mode of the solution grows like $e^{{|k|t}}$. This instability is so called Kelvin-Helmholtz instability. We refer to \cite{Maj} for more introductions.

In a series of important works \cite{Cou1, Cou2, Cou3}, Coulombel and Secchi proved the nonlinear stability of supersonic compressible vortex sheets for the 2-D isentropic Euler equations. This is a nonlinear hyperbolic equations with free boundary. Moreover, the free boundary is characteristic and the Kreiss-Lopatinskii condition holds only in a weak sense, which yields losses of derivatives. For this, they proved the existence of the solution by using Nash-Moser iteration. On the other hand,
for the 2-D compressible Euler equation with the Mach number $M<\sqrt{2}$ or 3-D compressible Euler equations, the vortex sheet is
violently unstable.

Trakhinin \cite{Tra1} first found a sufficient condition for the neutral stability of planar compressible current-vortex sheet for
a general case of the unperturbed flow. Furthermore, he also proved an {\it a priori} estimate for the linear variable coefficients linearized problem,
which is a key step towards nonlinear problem. Again in this case, the Kreiss-Lopatinskii condition holds only in a weak sense.
The existence of compressible current-vortex sheet was solved independently by Chen-Wang \cite{Chen} and Trakhinin \cite{Tra2} by using Nash-Moser iteration. Recently, Secchi and Trakhinin \cite{Sec} also proved the well-posedness of the plasma-vacuum interface problem in ideal compressible MHD equations. Wang and Yu \cite{WY} analyzed the linear stability of 2-D compressible current-vortex sheet.

The necessary and sufficient condition for the planar(constant coefficients) incompressible
current-vortex sheet was found by Syrovatskii \cite{Sy} and Axford \cite{Ax} for a long time ago.
The linear stability condition reads as follows
\ben
&\big|[\vu]\big|^2\le 2\big(|\vh^+|^2+|\vh^-|^2\big),\label{stabiliy-1}\\
&\big|[\vu]\times\vh^+\big|^2+\big|[\vu]\times\vh^-\big|^2\le 2\big|\vh^+\times\vh^-\big|^2.\label{stabiliy-2}
\een
In particular, if $\vh^+\times\vh^-\neq 0$ and
\begin{align}
\big|[\vu]\times\vh^+\big|^2+\big|[\vu]\times\vh^-\big|^2 < 2\big|\vh^+\times\vh^-\big|^2,\label{stability-weak}
\end{align}
the condition (\ref{stabiliy-1}) is automatically satisfied.  For the current sheet(i.e., $[\vu]=0$ and $\vh^+\times\vh^-\neq 0$),
the condition (\ref{stability-weak}) holds always.

Under the condition (\ref{stability-weak}), Morando, Trakhinin and Trebeschi \cite{Mo1} proved an {\it a priori} estimate with a loss of three derivatives for the linearized system. Under strong stability condition
\ben\label{stability-strong}
\max\Big(\big|[\vu]\times\vh^+\big|, \big|[\vu]\times\vh^-\big|\Big)<\big|\vh^+\times\vh^-\big|,
\een
Trakhinin \cite{Tra-in1} proved an {\it a priori} estimate without loss of derivative from data for the linearized system with variable coefficients.

In a recent work \cite{CMST}, Coulombel, Morando,  Secchi and Trebeschi  proved an {\it a priori estimate} without loss of derivatives for nonlinear current-vortex sheet problem under the strong stability condition (\ref{stability-strong}). This important progress gives some hope for the existence of the solution. However, unlike usual existence theory of the PDE problem,  it is usually highly nontrivial for a free boundary problem  to conclude the existence of the solution from uniform {\it a priori estimates} .

Nonlinear stability of the incompressible current-vortex sheet problem has been an open question, even under the strong stability condition \cite{Tra-r}.
Compared with compressible current-vortex sheet problem, one of main difficulties is that  the incompressible current-vortex sheet problem is not a hyperbolic problem, since the pressure is an unknown determined by
an elliptic equation.

\subsection{Main result}

This paper is devoted to proving nonlinear stability of the system (\ref{cvs})-(\ref{cvsi})
under the weak stability condition
\begin{align*}
\big|[\vu]\times\vh^+\big|^2+\big|[\vu]\times\vh^-\big|^2 < 2\big|\vh^+\times\vh^-\big|^2\quad \text{on}\quad\Gamma_t.
\end{align*}
By Lemma \ref{lem:stability}, weak stability condition implies that
\begin{align}\nonumber
\Lambda(\vh^\pm,[\vu])&\eqdefa\sup_{x\in\Gamma_t}\sup_{\varphi_1^2+\varphi_2^2=1}(h_1^+\varphi_1+h_2^+\varphi_2)^2+(h_1^-\varphi_1+h_2^-\varphi_2)^2
-2(v_1\varphi_1+v_2\varphi_2)^2\\
&\ge c_0
\end{align}
for some $c_0>0$, where $[\vu]=2(v_1,v_2,v_3)$.

\medskip

Now, let us state our main result.

\begin{theorem}
Let $s\ge 3$ be an integer. Assume that
\begin{align*}
f_0\in H^{s+\f12}(\BT),\quad  \vu_0^\pm,\,\vh_0^\pm\in H^{s}(\Omega_{0}^\pm).
\end{align*}
Furthermore assume that there exists $c_0>0$ so that
\begin{itemize}
\item[1.] $-(1-2c_0)\le f_0\le (1-2c_0)$;

\item[2.] $\Lambda(\vh_0^\pm,[\vu_0])\ge 2c_0$.
\end{itemize}
Then there exists $T>0$ such that the system (\ref{cvs})-(\ref{cvsi}) admits a unique solution $(f, \vu, \vh)$ in $[0,T]$ satisfying
\begin{itemize}
\item[1.] $f\in L^\infty([0,T), H^{s+\f12}(\BT))$;

\item[2.] $\vu^\pm,\,\vh^\pm\in L^\infty\big(0,T;H^{s}(\Omega^\pm_t)\big)$;

\item[3.] $-(1-c_0)\le f\le (1-c_0)$;

\item[4.] $\Lambda(\vh^\pm,[\vu])\ge c_0$.
\end{itemize}
\end{theorem}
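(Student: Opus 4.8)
The plan is to prove this local well-posedness result by the following scheme. First I would reformulate the two-phase free-boundary problem on the fixed domain $\BT\times(-1,1)$, splitting it into $\Om^\pm$ fixed reference domains, by means of a harmonic (or $x_3$-graph) change of variables $\Psi(t,\cdot)$ pulling back $\Om^\pm_t$ to $\Om^\pm$; the unknowns become $(f,\vu^\pm,\vh^\pm)$ where now $\vu^\pm,\vh^\pm$ are the pullbacks, satisfying a transformed MHD system with a nonlocal pressure determined by an elliptic transmission problem $\div_\Psi(\text{stuff})=0$ with jump condition $[p]=0$ and Neumann-type data from the interface. The linear stability condition $\Lambda(\vh^\pm,[\vu])\ge c_0$ will be used precisely to guarantee that a certain symbol governing the evolution of $f$ (the "Syrovatskii symbol" of the form $\sum_\pm(\vh^\pm\cdot\xi')^2-\tfrac12\sum_\pm((\vu^\pm-\vu^\mp)\cdot\xi')^2$ up to the normalization in the statement) is elliptic of order two in the tangential frequency $\xi'$, so that $\pa_t f$ and $\pa_t^2 f$ gain regularity from $\vu^\pm$ in a way that closes the estimates.

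Second, I would set up a nonlinear iteration / approximate scheme: either a Nash–Moser scheme following the compressible analogues, or — since the goal is no loss of derivatives from the data — a direct fixed-point / parabolic-regularization or Picard iteration built on a good linearized problem. The heart of the matter is the linearized transmission problem around a state $(\bar f,\bar\vu^\pm,\bar\vh^\pm)$ satisfying the stability condition; for this one proves an a priori estimate $\|\vu^\pm\|_{H^s}+\|\vh^\pm\|_{H^s}+\|f\|_{H^{s+1/2}}\le C(\text{data})$ uniformly. The key tools are: (i) div-curl estimates for $\vu^\pm,\vh^\pm$ in $\Om^\pm$ controlling full $H^s$ norms by tangential derivatives, divergence, curl, and the normal trace on the interface; (ii) an energy estimate for the tangential derivatives $\pa'^\alpha$ of $(\vu^\pm,\vh^\pm)$, in which the interface terms are handled by integrating by parts along $\Gamma_t$ and using the boundary conditions $\vhpm\cdot\vN=0$, $\vupm\cdot\vN=\pa_t f$, together with the pressure jump $[p]=0$; (iii) the crucial positivity: the worst boundary commutator, after using the equation for $\pa_t f$, produces a quadratic form in $\nabla' f$ times exactly $\Lambda(\vh^\pm,[\vu])$, which by hypothesis is $\ge c_0>0$, giving control of $\|\Lambda^{1/2}\langle\nabla'\rangle^{1/2}\pa'^\alpha f\|_{L^2}$ and hence of $\|f\|_{H^{s+1/2}}$ (the half-derivative gain). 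The vorticity $\vom^\pm=\curl\vu^\pm$ and current $\vj^\pm=\curl\vh^\pm$ satisfy transport(-type) equations along $\vu^\pm$, so their $H^{s-1}$ norms are propagated by standard commutator estimates; the divergences stay zero.

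Third, with the uniform linear estimate in hand, I would run the iteration: at each step solve the linear transmission elliptic problem for the pressure (solvability from the transmission Lax–Milgram / Neumann compatibility $\int V\,dS=0$ coming from $\div\vu=0$ and the fixed total volume), solve the transport equations for $\vu,\vh$, and update $f$ by the kinematic condition $\pa_t f=\vu^+\cdot\vN$. One checks the nonlinear terms are tame in the $H^s$ topology (Moser/Kato–Ponce product and commutator estimates, trace theorems), that the constraints ($\div=0$, $\vh\cdot\vN=0$, the geometric bounds $|f|\le 1-c_0$ and $\Lambda\ge c_0$) are preserved on a time interval $[0,T]$ with $T$ depending only on the data norms and $c_0$ — this is where Lemma~\ref{lem:stability} and continuity in time are invoked — and that the iteration is contractive in a lower-norm (e.g. $H^{s-1}$) space, yielding convergence to a solution; uniqueness follows from the same lower-norm difference estimate.

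The main obstacle I expect is item (iii): isolating, in the tangential energy estimate for the linearized problem, the boundary quadratic form and showing it is controlled below by $c_0\langle\nabla'\rangle$. This requires carefully tracking how the pressure (a nonlocal operator of order $+1$ in $\nabla'$ acting on the interface data) interacts with the jump relation, writing the normal derivative of $p$ on $\Gamma_t$ via the elliptic problem, and combining the $\vu$-energy and $\vh$-energy identities with the right weights so that the indefinite cross terms cancel and the remainder is exactly (a localization of) $\Lambda(\vh^\pm,[\vu])|\nabla' f|^2$; all lower-order commutators must be absorbed or closed by Gronwall. A secondary difficulty is the characteristic nature of the interface for the $\vh$-component ($\vh\cdot\vN=0$), meaning one does not directly control $\pa_3\vh^\pm$ near $\Gamma_t$ from the equations and must recover it from $\div\vh=0$ and $\curl\vh$ — the div-curl lemma in $\Om^\pm$ is what makes this work, but it has to be applied with care to the pulled-back (variable-coefficient) operators.
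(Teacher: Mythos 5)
Your proposal correctly identifies several ingredients that the paper also uses: a harmonic (graph) change of variables to a reference strip, div-curl estimates to recover $\vu^\pm,\vh^\pm$ from vorticity/current/normal trace, transport estimates for $\vom^\pm=\curl\vu^\pm$ and $\vj^\pm=\curl\vh^\pm$, a direct Picard-type iteration with contraction in a lower Sobolev norm, and above all the role of $\Lambda(\vh^\pm,[\vu])$ as the quadratic form whose positivity buys the half-derivative gain on $f$. But your proposal diverges from the paper in the part you yourself flag as the ``main obstacle,'' and this is exactly where the paper's novelty lies. You propose to extract the $\Lambda$-coercivity from \emph{tangential energy estimates on $(\vu^\pm,\vh^\pm)$ in the bulk}, combining the $\vu$-energy and $\vh$-energy with weights so that indefinite cross-terms cancel and leave $\Lambda(\vh^\pm,[\vu])\lvert\nabla' f\rvert^2$ on the boundary. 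This is precisely the strategy of Coulombel--Morando--Secchi--Trebeschi, who obtained such an a priori estimate but only under the \emph{strong} stability condition (\ref{stability-strong}); you give no mechanism that makes the cancellation work under the weak condition, and there is no reason to believe the cross-terms organize themselves so favorably in that regime. The paper instead reformulates the interface dynamics as a closed second-order evolution for the height function: defining $\theta=\vu^\pm\cdot\vN_f$, one derives (Section~\ref{Sec:Reform}) the system $\pa_t f=\theta$, $\pa_t\theta=\mathcal{A}f+\mathfrak{g}$ with $\mathcal{A}$ a second-order operator whose principal symbol is $(v_i\xi_i)^2-\tfrac12((h_i^+\xi_i)^2+(h_i^-\xi_i)^2)$, strictly negative under the weak stability condition. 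Thus $f$ obeys a \emph{strictly hyperbolic} wave-type equation with $D_t=\pa_t+w_i\pa_i$, and the energy functional $E_s(\pa_t f, f)=\lVert D_t\langle\na\rangle^{s-1/2}f\rVert^2-\lVert v_i\pa_i\langle\na\rangle^{s-1/2}f\rVert^2+\tfrac12\lVert h_i^+\pa_i\langle\na\rangle^{s-1/2}f\rVert^2+\tfrac12\lVert h_i^-\pa_i\langle\na\rangle^{s-1/2}f\rVert^2$ is \emph{automatically} coercive, with no weight-matching or miraculous cancellation required; all bulk information enters only through the lower-order term $\mathfrak{g}$ via the paralinearized Dirichlet--Neumann operator and the pressure ellipticity. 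This decoupling (interface: hyperbolic wave; bulk: transport) is what lets the proof close under the weak condition.

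Two further gaps worth flagging. First, you do not account for the kernel of the div-curl system in a periodic slab: without prescribing the tangential averages $\beta_i^\pm(t)=\int_{\T^2}u_i^\pm(\cdot,\pm1)\,dx'$ and $\gamma_i^\pm(t)=\int_{\T^2}h_i^\pm(\cdot,\pm1)\,dx'$ (which the paper evolves via separate ODEs and feeds into Proposition~\ref{prop:div-curl}), the reconstruction of $\vu^\pm,\vh^\pm$ from $(\vom^\pm,\vj^\pm,\pa_t f)$ is not unique and the iteration map is ill-defined. Second, ``yielding convergence to a solution'' is not the end of the argument: because the scheme linearizes and projects (e.g.\ $\cN_f^{-1}$ is applied after removing the mean, the pressure is defined through a decomposition, the divergence-free condition on the vorticity is enforced by $P_f^{\div}$), one must verify a posteriori that the fixed point of the iteration actually solves the original system (\ref{cvs}); the paper devotes Section~10 to checking $\curl\vu^\pm=\vom^\pm$, reconstructing $p^\pm$, and showing $\pa_t\vu^\pm+\vu^\pm\cdot\na\vu^\pm-\vh^\pm\cdot\na\vh^\pm+\na p^\pm=0$ and $\pa_t\vh^\pm-\vh^\pm\cdot\na\vu^\pm+\vu^\pm\cdot\na\vh^\pm=0$ by a div-curl uniqueness argument. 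Your proposal does not address either of these points.
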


Now let us present main ideas of our proof, which are motivated by recent important progress  on the well-posedness for the water-wave problem \cite{Wu1, Wu2, Amb, Lan, Lin, CSH, ZZ}, especially \cite{SZ1, SZ2, SZ3}.

A key idea is to consider the evolution of the unknowns(the free surface, the normal velocity etc.) defined on the free surface and the motion of the fluid in the interior simultaneously. For this end, we will derive  an important evolution equation of the scaled normal velocity defined by
\begin{align*}
\theta(t,x')\eqdefa\vu^\pm(t,x',f(t,x'))\cdot\vN(t,x'),
\end{align*}
which satisfies
\begin{align}\label{eq:surface}
\left\{
\begin{array}{l}
\partial_tf=\theta,\\
\partial_t\theta=\mathcal{A}f+\mathfrak{g},
\end{array}\right.
\end{align}
where $\mathfrak{g}$ denotes the lower order nonlinear terms and
\beno
\mathcal{A}f=-2(w_1\partial_1\theta+w_2\partial_2\theta)+\sum_{i,j=1,2}(-w_iw_j-v_iv_j+\f12h^+_ih^+_j+\f12h^-_ih^-_j)\partial_i\partial_jf,
\eeno
with $(w_1, w_2, w_3)=\f12(\vu^++\vu^-)|_{\Gamma(t)}$ and $(v_1,v_2,v_3)=\f12(\vu^+-\vu^-)=\f12[\vu]$.

The most important finding of this work is that
the system (\ref{eq:surface}) is strictly hyperbolic under the weak stability condition ({\ref{stability-weak}}) in the following sense:  $f$ satisfies a second order
equation in the form
\begin{align*}
D_t^2f=\f12\sum_{i=1,2}(-2v_iv_j+h^+_ih^+_j+h^-_ih^-_j)\partial_i\partial_jf+\cdots,
\end{align*}
where $D_t=\partial_t+w_1\partial_1+w_2\partial_2$, and the principal symbol of the operator is
\begin{align*}
(v_i\xi_i)^2-\f12\big((h^+_i\xi_i)^2+(h^-_i\xi_i)^2\big),
\end{align*}
which is strictly negative under (\ref{stability-weak}).

The motion of the fluid will be described by the vorticity equations. With the vorticity and current, the velocity and magnetic field are recovered by solving the
div-curl system in a finite strip. To ensure the existence and uniqueness of the solution of the div-curl system, we need to introduce the suitable compatiblity conditions on the vorticity, and prescribe a value on the average of the tangential components on the fixed boundary.

To estimate the nonlinear term $\mathfrak{g}$, we  need to study the estimates in Sobolev spaces of the Dirichlet-Neumann(DN) operator. Motivated by \cite{AM}, we will use the paradifferential operator tools to give the precise estimate for the DN operator,  especially on the dependence of regularity of the free surface.

The construction of the approximate solution is completed by introducing the suitable linearization of the system and the iteration map.
We proved that the approximate solution sequence is a Cauchy sequence in the lower order Sobolev spaces. Thus, we can obtain a limit system.
The question of whether the limit is equivalent to the origin system is also highly nontrivial.

\medskip
We believe that our method can be applied to solve the plasma-vacuum interface problem in ideal incompressible MHD and the other related free boundary problems(see \cite{HL} for example). The well-posedness of the linearized plasma-vacuum interface problem has been proved by Morando, Trakhinin and Trebeschi \cite{Mo2}.

\vspace{0.2cm}

The rest of this paper is organized as follows. In section 2, we will introduce the reference domain and harmonic coordinate used in this paper.
In section 3, we introduce the Dirichlet-Neumann operator and present the estimates in Sobolev spaces. In section 4, we solve the div-curl system.
In section 5, we reformulate the system into a new formulation. In section 6, we reformulate the weak stability condition and study linear stability. In section 7, we present the uniform estimates for the linearized system. Section 8-Section 10 are devoted to the existence and uniqueness of the solution.
Section 11 is an appendix, in which we introduce the paradifferential operator and present an elliptic estimate in a strip.

\section{Reference domain and harmonic coordinate}

Motivated by  \cite{SZ1},  we introduce a fixed reference domain in order to
solve the free boundary problem. Let $\Gamma_*$ be a fixed graph given by
\begin{align*}
\Gamma_*=\Big\{(y_1,y_2,y_3):y_3=f_*(y_1,y_2)\Big\}.
\end{align*}
The reference domain $\Om_*^{\pm}$ is given by
\begin{align*}
\Om_*=\mathbb{T}^2\times(-1,1),\quad\Om_*^{\pm}=\Big\{ y \in \Om_*| y_3 \gtrless f_*(y')\Big\}.
\end{align*}

We will seek the free boundary which lies in a neighborhood of the reference domain. For this, we define
\begin{align*}
\Upsilon(\delta,k)&\eqdefa\Big\{f\in H^k(\mathbb{T}^2): \|f-f_*\|_{H^k(\mathbb{T}^2)}\le \delta \Big\}.
\end{align*}
For $f\in \Upsilon(\delta,k)$, we can define the graph $\Gamma_f$ as
\[
\Gamma_f\eqdefa\left\{x\in \Om_t| x_3=f(t,x'), \int_{\mathbb{T}^2}f(t,x')\dx'=0 \right\}.
\]
The graph $\Gamma_f$ separates $\Omega_t$ into two parts:
\[
\Om_f^{+}=\Big\{ x \in \Om_t| x_3 > f(t,x')\Big\}, \quad \Om_f^{-}=\Big\{ x \in \Om_t| x_3 < f(t,x')\Big\}.
\]
We denote
\[\vN_f\triangleq(-\partial_1f, -\partial_2f, 1),\quad \vn_f\triangleq\vN_f/\sqrt{1+|\nabla f|^2}.\]
That is, $\vN_f$ is the outward normal vector of $\Om_f^-$.\medskip

Now we introduce the harmonic coordinate. Given $f\in \Upsilon(\delta,k)$, we define a map $\Phi_f^\pm$ from $\Omega_*^\pm$ to $\Omega_f^\pm$
by harmonic extension:
\beq\left\{\begin{split}
&\Delta_y \Phi_f^\pm=0,\qquad \text{for } y\in \Omega_*^\pm,\\
&\Phi_f^\pm(y',f_*(y'))=(y',f(y')), \quad y'\in\mathbb{T}^2,\\
&\Phi_f^\pm(y',\pm1)=(y',\pm1), \quad y'\in\mathbb{T}^2.\\
\end{split}\right.\eeq

Given $\Gamma_*$, there exists $\delta_0=\delta_0(\|f_*\|_{W^{1,\infty}})>0$ so that $\Phi_f^\pm$ is a bijection when $\delta\le \delta_0$.
Then we can define an inverse map $\Phi_f^{\pm-1}$ from $\Omega_f^\pm$ to $\Omega_*^\pm$ such that
\beno
 \Phi_f^{\pm-1}\circ\Phi_f^\pm=\Phi_f^\pm\circ\Phi_f^{\pm-1}=\mathrm{Id}.
\eeno

Let us state some basic inequalities in different coordinates. The proof is standard, thus we omit it.

\begin{lemma}\label{lem:basic}
Let $f\in \Upsilon(\delta_0,s-\f12)$ for $s\ge 3$. Then there exists a constant $C$ depending only on $\delta_0$ and  $\|f_*\|_{H^{s-\f12}}$ so that

\begin{itemize}
\item[1.] If $u\in H^{\sigma}(\Om_f^\pm)$ for $\sigma\in [0,s]$, then
\beno
&&\|u\circ\Phi_f^\pm\|_{H^\sigma(\Om^\pm_*)}\le C\|u\|_{H^\sigma(\Om_f^\pm)}.
\eeno
\item[2.] If $u\in H^{\sigma}(\Om_*^\pm)$ for $\sigma\in [0,s]$, then
\beno
\|u\circ\Phi_f^{\pm-1}\|_{H^{\sigma}(\Om_f^\pm)}\le C\|u\|_{H^\sigma(\Om_*^\pm)}.
\eeno

\item[3.] If $u, v\in H^{\sigma}(\Om_*^\pm)$ for $\sigma\in [2,s]$, then
\beno
\|uv\|_{H^\sigma(\Omega_f^\pm)}\le C\|u\|_{H^\sigma(\Omega_f^\pm)}\|v\|_{H^\sigma(\Omega_f^\pm)}.
\eeno
\end{itemize}
\end{lemma}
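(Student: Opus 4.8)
The plan is to reduce all three inequalities to a single ingredient: the harmonic corrector $\psi_f^\pm\eqdefa\Phi_f^\pm-\mathrm{Id}$ is a uniformly small perturbation of zero in $H^s(\Om_*^\pm)$; from there everything follows from classical change-of-variables and tame-product estimates. To obtain this ingredient, note that $\psi_f^\pm$ is harmonic in $\Om_*^\pm$, equals $(0,0,f-f_*)$ on $\Gamma_*$, and vanishes on $\BT\times\{\pm1\}$. Flattening $\Gamma_*$ by the fixed diffeomorphism $(y',y_3)\mapsto(y',y_3-f_*(y'))$ — whose cost depends only on $\|f_*\|_{H^{s-\f12}}$ and is absorbed into the constant — turns this into a variable-coefficient elliptic Dirichlet problem on a flat strip with data $f-f_*\in H^{s-\f12}(\BT)$ on one face and $0$ on the other, so the elliptic estimate in a strip established in the appendix yields $\|\psi_f^\pm\|_{H^s(\Om_*^\pm)}\le C\|f-f_*\|_{H^{s-\f12}(\BT)}\le C\delta_0$ with $C=C(\delta_0,\|f_*\|_{H^{s-\f12}})$. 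Since $s\ge3$ forces $H^{s-1}(\Om_*^\pm)\hookrightarrow L^\infty$, shrinking $\delta_0$ makes $\|\nabla\psi_f^\pm\|_{L^\infty}$ as small as we wish; hence $\nabla\Phi_f^\pm=\mathrm{Id}+\nabla\psi_f^\pm$ is uniformly invertible with $\det\nabla\Phi_f^\pm$ bounded above and below — which is also why $\Phi_f^\pm$ is the bijection claimed just before the lemma.

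Next I would establish an abstract composition bound: for any map $\Phi=\mathrm{Id}+\psi$ between two graph domains whose boundary Lipschitz constants are uniformly controlled (true here, since $\|f\|_{W^{1,\infty}}\le\|f_*\|_{H^{s-\f12}}+\delta_0$) with $\|\psi\|_{H^s}\le C\delta_0$, the operator $u\mapsto u\circ\Phi$ is bounded $H^\sigma\to H^\sigma$ for every $0\le\sigma\le s$, with constant depending only on $\delta_0$ and $\|f_*\|_{H^{s-\f12}}$. For $\sigma=0$ this is the Jacobian change of variables together with the two-sided determinant bound. For integer $\sigma\in[1,s]$ one invokes the multivariate Fa\`a di Bruno formula: for $|\alpha|=\sigma$, $D^\alpha(u\circ\Phi)$ is a finite sum of terms $((D^\beta u)\circ\Phi)\prod_{j=1}^{|\beta|}D^{\gamma_j}\Phi$ with $1\le|\beta|\le\sigma$, $|\gamma_j|\ge1$ and $\sum_j|\gamma_j|=\sigma$, and one bounds each in $L^2$ by distributing regularity with the tame product inequality (after extending $u$ via a Stein operator of uniformly bounded norm and working on $\R^3$), the induction hypothesis $(D^\beta u)\circ\Phi\in H^{\sigma-|\beta|}$, and $\|\psi\|_{H^s}\le C\delta_0$. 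The one term carrying $s$ derivatives of $\Phi$, namely $((\nabla u)\circ\Phi)\,D^\alpha\Phi$, is controlled by $\|(\nabla u)\circ\Phi\|_{L^\infty}\|D^\alpha\Phi\|_{L^2}$ using $\nabla u\in H^{s-1}\hookrightarrow L^\infty$ — exactly where $s\ge3$ enters. Non-integer $\sigma$ follows by interpolation between consecutive integers.

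With the composition bound in hand (including its routine Lipschitz variant $\|u\circ(\mathrm{Id}+h_1)-u\circ(\mathrm{Id}+h_2)\|_{H^{\sigma-1}}\le C\|u\|_{H^\sigma}\|h_1-h_2\|_{H^{\sigma-1}}$), the inverse map is handled by solving $\tilde\psi^\pm=-\psi_f^\pm\circ(\mathrm{Id}+\tilde\psi^\pm)$ by contraction in a small $H^s(\Om_f^\pm)$-ball — the composition bound keeps the ball invariant and the Lipschitz variant makes the map an $H^{s-1}$-contraction — so $\Phi_f^{\pm-1}=\mathrm{Id}+\tilde\psi^\pm$ with $\|\tilde\psi^\pm\|_{H^s(\Om_f^\pm)}\le C\delta_0$. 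Then Item~1 is the composition bound for $\Phi=\Phi_f^\pm:\Om_*^\pm\to\Om_f^\pm$, and Item~2 is the composition bound for $\Phi=\Phi_f^{\pm-1}:\Om_f^\pm\to\Om_*^\pm$. For Item~3, recall that on the fixed graph domain $\Om_*^\pm$ the space $H^\sigma$ is a Banach algebra for $\sigma>3/2$, hence for $\sigma\in[2,s]$; writing $uv=\big[(u\circ\Phi_f^\pm)(v\circ\Phi_f^\pm)\big]\circ\Phi_f^{\pm-1}$ and applying in turn Item~2, this algebra property, and Item~1 gives $\|uv\|_{H^\sigma(\Om_f^\pm)}\le C\|u\|_{H^\sigma(\Om_f^\pm)}\|v\|_{H^\sigma(\Om_f^\pm)}$.

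I expect the only genuinely delicate point to be the top-order accounting in the composition bound: one must verify that no Fa\`a di Bruno term forces more than $s$ derivatives onto $\Phi_f^\pm$ — which would be false, since $\Phi_f^\pm$ is merely $H^s$ — and that whenever a factor has to be placed in $L^\infty$ the requisite Sobolev embedding is available; both are guaranteed by the hypothesis $s\ge3$. Everything else — the elliptic estimate in the strip, the Jacobian bounds and bijectivity, the contraction for the inverse map, the interpolation step, and the algebra property of $H^\sigma$ on a Lipschitz domain — is entirely standard, which is why the paper omits it.
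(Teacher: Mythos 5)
Your proposal is correct and is precisely the standard argument the paper has in mind when it writes ``the proof is standard, thus we omit it'': an $H^s$ bound on the harmonic corrector $\Phi_f^\pm-\mathrm{Id}$ via the elliptic estimate in a strip, Fa\`a di Bruno plus tame products for the composition operator, a contraction (or inverse-function) argument for $\Phi_f^{\pm-1}$, and the algebra property of $H^\sigma$, $\sigma\ge 2$, for the product estimate. The only points worth flagging are cosmetic: the appendix estimate (Proposition \ref{prop:elliptic-c}) is stated for mixed Dirichlet--Neumann data whereas your corrector problem is Dirichlet--Dirichlet (the same proof applies), and you have implicitly corrected the typo in item 3 of the statement, where $u,v\in H^\sigma(\Om_*^\pm)$ should read $u,v\in H^\sigma(\Om_f^\pm)$.
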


Let us conclude this section by introducing some notations.
\vspace{0.1cm}

We will use $x=(x_1,x_2,x_3)$ or $y=(y_1,y_2,y_3)$ to denote the coordinates in the fluid region,
and use $x'=(x_1,x_2)$ or $y'=(y_1, y_2)$ to denote the natural coordinates on the interface or on the top/bottom boundary.

For a function $g:\Omega\to\mathbb{R}$, we denote $\nabla g=(\partial_1g,\partial_2g,\partial_3g)$, and for a function $\eta:\BT\to\mathbb{R}$,
$\nabla\eta=(\partial_1\eta,\partial_2\eta)$. For a function $g:\Omega_f^\pm\to\mathbb{R}$, we can define its trace on $\Gamma_f$, which is denoted by $\underline{g}(x')$. Thus, for $i=1,2$,
\beno
\partial_{i}\underline{g}(x')=\partial_ig(x',f(x'))+\partial_3g(x',f(x'))\partial_if(x').
\eeno

We denote by $\|\cdot\|_{H^s(\Om)}$ the Sobolev norm in $\Om$, and by $\|\cdot\|_{H^s}$  the Sobolev norm in $\T^2$.
We define
\beno
H_0^s(\T^2)\eqdefa H^s(\T^2)\cap\Big\{\phi\in L^2(\T^2):\int_{\T^2}\phi(x')dx'=0\Big\}.
\eeno

\section{Dirichlet-Neumann(DN) operator}

In the sequel, we always assume that there exists a constant $c_0$ so that
\ben\label{ass:f}
&-(1-c_0)\le f(x')\le (1-c_0)\quad \textrm{for any }x'\in \T^2.
\een

\subsection{Definition of DN operator}

For any $g(x')=g(x_1,x_2)\in H^k(\mathbb{T}^2)$, we can view $g$ as a function on $\Gamma_f$ and
then denote by $\mathcal{H}_f^\pm g$ the harmonic extension to $\Omega^\pm_f$, i.e.,
\beq\left\{\begin{split}
&\Delta \mathcal{H}_f^\pm g =0,\qquad \text{for } x\in \Omega_f^\pm,\\
&(\mathcal{H}_f^\pm g)(x',f(x'))=g(x'), \quad x'\in\mathbb{T}^2,\\
&\partial_3\mathcal{H}_f^\pm g(x',\pm1)=0, \quad x'\in\mathbb{T}^2.
\end{split}\right.\eeq
The Dirichlet-Neumann operator is defined by
\begin{align}
  \mathcal{N}^\pm_fg\eqdefa\mp\vN_f\cdot(\nabla\mathcal{H}^\pm_fg)\big|_{\Gamma_f}.
\end{align}
We also define
\begin{align}
\widetilde{\mathcal{N}}_fg\eqdefa\mathcal{N}^+_fg+\mathcal{N}^-_fg.
\end{align}

The Dirichlet-Neumann operator has the following basic properties(see \cite{Lan} for example).

\begin{lemma}
\label{lem:DN}
It holds that
\begin{itemize}

\item[1.] $\mathcal{N}^\pm_f$ is a self-adjoint operator:
\[
(\mathcal{N}^\pm_f\psi,\phi)=(\psi,\mathcal{N}^\pm_f\phi),\quad\forall \phi,\, \psi\in H^\f12(\T^2);
\]

\item[2.] $\mathcal{N}^\pm_f$ is a positive operator:
\[
(\mathcal{N}^\pm_f\phi,\phi)=\|\na\mathcal{H}_f^\pm\phi\|_{L^2(\Omega_f)}^2\ge 0,\quad \forall \phi\in H^\f12(\T^2);
\]
Especially, if $\int_{\T^2}\phi(x')dx'=0$, there exists $c>0$ depending on $c_0, \|f\|_{W^{1,\infty}}$ such that
\[
(\mathcal{N}^\pm_f\phi,\phi)\ge c\|\mathcal{H}_f^\pm\phi\|_{H^1(\Omega_f)}^2\ge c\|\phi\|_{H^\f12}^2.
\]

\item[3.] $\mathcal{N}^\pm_f$ is a bijection from $H^{k+1}_0(\T^2)$ to $H^{k}_0(\T^2)$ for $k\ge 0$.
\end{itemize}
\end{lemma}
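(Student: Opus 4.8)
\emph{Proof sketch.} I would hang all three statements on one identity: for $\phi,\psi\in H^{\f12}(\T^2)$,
\begin{align*}
(\mathcal{N}^\pm_f\phi,\psi)_{L^2(\T^2)}=\int_{\Omega_f^\pm}\nabla\mathcal{H}^\pm_f\phi\cdot\nabla\mathcal{H}^\pm_f\psi\,dx.
\end{align*}
This is Green's formula: with $u=\mathcal{H}^\pm_f\phi$, $v=\mathcal{H}^\pm_f\psi$, harmonicity of $u$ gives $\int_{\Omega_f^\pm}\nabla u\cdot\nabla v\,dx=\int_{\partial\Omega_f^\pm}(\partial_\nu u)\,v\,dS$; the flat part $\Gamma^\pm\subset\partial\Omega_f^\pm$ contributes nothing because $\partial_\nu u=0$ there, and on $\Gamma_f$ the unnormalized conormal $\vN_f$ absorbs the surface measure ($dS=|\vN_f|\,dx'$), so the remaining term is exactly $\int_{\T^2}(\mathcal{N}^\pm_f\phi)\,\psi\,dx'$. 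Symmetry of the right-hand side in $(\phi,\psi)$ gives (1), and taking $\psi=\phi$ gives $(\mathcal{N}^\pm_f\phi,\phi)=\|\nabla\mathcal{H}^\pm_f\phi\|_{L^2(\Omega_f^\pm)}^2\ge0$, the first half of (2).

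For the coercive part of (2) I would use that \eqref{ass:f} makes $\Omega_f^\pm$ uniformly Lipschitz, with Lipschitz character---hence Poincar\'e and trace constants---controlled by $c_0$ and $\|f\|_{W^{1,\infty}}$. The crux is a Poincar\'e-type inequality $\|\mathcal{H}^\pm_f\phi\|_{L^2(\Omega_f^\pm)}\lesssim\|\nabla\mathcal{H}^\pm_f\phi\|_{L^2(\Omega_f^\pm)}$ for $\phi\in H^{\f12}_0(\T^2)$; granting it, the trace theorem gives $\|\phi\|_{H^{\f12}(\T^2)}\lesssim\|\mathcal{H}^\pm_f\phi\|_{H^1(\Omega_f^\pm)}\lesssim\|\nabla\mathcal{H}^\pm_f\phi\|_{L^2(\Omega_f^\pm)}$, and with the identity above this is the chain $(\mathcal{N}^\pm_f\phi,\phi)=\|\nabla\mathcal{H}^\pm_f\phi\|_{L^2(\Omega_f^\pm)}^2\gtrsim\|\mathcal{H}^\pm_f\phi\|_{H^1(\Omega_f^\pm)}^2\gtrsim\|\phi\|_{H^{\f12}}^2$. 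I would prove the Poincar\'e inequality by contradiction: if it failed there would be $\phi_n\in H^{\f12}_0$ with $\|\mathcal{H}^\pm_f\phi_n\|_{L^2(\Omega_f^\pm)}=1$ and $\|\nabla\mathcal{H}^\pm_f\phi_n\|_{L^2(\Omega_f^\pm)}\to0$; then $\mathcal{H}^\pm_f\phi_n$ is bounded in $H^1(\Omega_f^\pm)$, hence convergent in $L^2$ by Rellich, and since the gradients vanish in the limit the limit is a constant and the convergence is strong in $H^1$; by the trace theorem $\phi_n\to\text{const}$ in $H^{\f12}(\T^2)$, which $\int_{\T^2}\phi_n=0$ forces to be $0$, contradicting $\|\mathcal{H}^\pm_f\phi_n\|_{L^2(\Omega_f^\pm)}=1$. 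To keep the constant dependent only on $(c_0,\|f\|_{W^{1,\infty}})$ one runs this argument uniformly over $f$ in the class \eqref{ass:f} with $\|f\|_{W^{1,\infty}}$ bounded.

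For (3): the image of $H^{k+1}_0$ lies in $H^k_0$ because $\mathcal{N}^\pm_f$ loses exactly one derivative, $\mathcal{N}^\pm_f:H^{k+1}(\T^2)\to H^k(\T^2)$ (elliptic regularity for the harmonic extension in the strip; this is the standard DN estimate, cf.\ \cite{Lan}, sharpened in Section 3 and the appendix, and $f\in\Upsilon(\delta_0,s-\f12)$ with $s\ge3$ is amply regular for the range of $k$ used), and because $\int_{\T^2}\mathcal{N}^\pm_f\phi\,dx'=0$ by the divergence theorem (the $\Gamma^\pm$ fluxes again vanish). Injectivity on $H^{k+1}_0$ follows from (2): $\mathcal{N}^\pm_f\phi=0\Rightarrow\nabla\mathcal{H}^\pm_f\phi\equiv0\Rightarrow\phi\equiv\text{const}\Rightarrow\phi=0$. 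For surjectivity I would solve $\mathcal{N}^\pm_f\phi=\psi$, $\psi\in H^k_0$, by Lax--Milgram on $H^{\f12}_0(\T^2)$: the bilinear form $(\phi,\chi)\mapsto\int_{\Omega_f^\pm}\nabla\mathcal{H}^\pm_f\phi\cdot\nabla\mathcal{H}^\pm_f\chi\,dx$ is bounded (extension estimate) and coercive (by (2)), while $\chi\mapsto(\psi,\chi)_{L^2(\T^2)}$ is a bounded functional, so there is a unique $\phi\in H^{\f12}_0$ with $(\mathcal{N}^\pm_f\phi,\chi)=(\psi,\chi)$ for all $\chi$, i.e.\ $\mathcal{N}^\pm_f\phi=\psi$; reading this as a Neumann problem for $\mathcal{H}^\pm_f\phi$ with boundary data $\psi\in H^k(\Gamma_f)$ and applying the strip elliptic estimate bootstraps $\phi$ into $H^{k+1}_0(\T^2)$.

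Items (1) and the positivity in (2) are routine. The only real work is the Poincar\'e/coercivity bound in (2)---in particular tracking that its constant depends only on $c_0$ and $\|f\|_{W^{1,\infty}}$, which is exactly what the uniform compactness argument delivers---together with the ``loss of exactly one derivative'' elliptic regularity underlying (3), whose $f$-regularity-explicit form is developed in Section 3 and the appendix; at this stage it suffices to invoke the classical DN-operator theory.
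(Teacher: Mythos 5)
The paper does not prove this lemma; it simply cites \cite{Lan}, and your sketch is a faithful reconstruction of the standard argument from that reference: the Green's-formula identity $(\mathcal{N}^\pm_f\phi,\psi)=\int_{\Omega_f^\pm}\nabla\mathcal{H}^\pm_f\phi\cdot\nabla\mathcal{H}^\pm_f\psi\,dx$ (with the homogeneous Neumann data on $\Gamma^\pm$ killing the extra flux, and the unnormalized conormal $\vN_f$ absorbing the $|\vN_f|\,dx'$ surface measure so that $\mathcal{N}^\pm_f$ is exactly the outward conormal derivative in both half-strips) gives (1) and the first half of (2); a Poincar\'e inequality on the strip gives coercivity; and Lax--Milgram on $H^{1/2}_0$ plus the one-derivative-loss elliptic regularity gives (3), with the zero-mean of the image following from the divergence theorem as you note. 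Your reasoning is correct and complete modulo the one spot you flag yourself: the uniformity in $f$ of the Poincar\'e constant. The contradiction/compactness argument run "uniformly over the class" requires a diagonal extraction in both $\phi_n$ and $f_n$ and is a bit delicate; a cleaner route, and one more in the spirit of the reference-domain machinery of Section 2, is to pull $\mathcal{H}^\pm_f\phi$ back to the fixed strip $\Omega_*^\pm$ by the bi-Lipschitz map $\Phi_f^\pm$, whose Jacobian and inverse-Jacobian bounds depend only on $c_0$ and $\|f\|_{W^{1,\infty}}$; the Poincar\'e constant on $\Omega_f^\pm$ then inherits exactly that dependence from the fixed-domain constant, with no compactness needed.
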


\subsection{Paralinearization of DN operator}

Motivated by \cite{AM}, we use the paradifferential operator to study the DN operator.
In this subsection, we will frequently use notations introduced in the appendix.

In terms of $\Psi$, the DN operator $\mathcal{N}^-_f\psi$ can be written as
\beno
\mathcal{N}^-_f\psi=\Big(\f{1+|\nabla \rho_\delta|^2}{\pa_z \rho_\delta}\pa_z\Psi-\nabla\rho_\delta \cdot \nabla\Psi\Big)\Big|_{z=0}.
\eeno
We denote
\beno
\zeta_1(x)\triangleq\f{1+|\nabla \rho_\delta|^2}{\pa_z \rho_\delta}\big|_{z=0}=\f {1+|\na f|^2} {\pa_z\rho_\delta|_{z=0}},\quad \zeta_2(x)\triangleq\na\rho_\delta\big|_{z=0}=\na f(x).
\eeno
It is easy to show that for $s>\f32$,
\ben\label{eq:DN-zeta-Hs}
\|\zeta_1-1\|_{H^{s-\f12}}+\|\zeta_2\|_{H^{s-\f12}}\le C\big(c_0, \|f\|_{H^{s+\f12}}\big).
\een

Using Bony's decomposition (\ref{Bony}), we can decompose $\mathcal{N}^-_f$ as
\begin{align*}
\mathcal{N}^-_f\psi=&\pa_z\Psi+T_{\zeta_1-1}\pa_z\Psi+T_{\pa_z\Psi}(\zeta_1-1)+R(\zeta_1-1,\pa_z\Psi)-T_{i\zeta_2\cdot \xi}\Psi\\
&-T_{\nabla\Psi}\cdot\zeta_2-R(\zeta_2,\nabla\Psi)\big|_{z=0}.
\end{align*}
Replacing $\pa_z\Psi$ by $T_A\Psi$, we get
\ben\label{eq:DN-para-m}
\mathcal{N}^-_f\psi=T_{\lambda}\psi+R_f^-\psi,
\een
where the symbol $\lambda(x,\xi)$ of the leading term is given by
\beno
\lambda(x,\xi)=\zeta_1A-i\zeta_2\cdot \xi\big|_{z=0}=\sqrt{(1+|\na f|^2)|\xi|^2-(\na f\cdot\xi)^2}.
\eeno
Obviously, $\lambda\in \Gamma_{\e}^{1}(\T^2)$ with the bound
\ben\label{eq:lambda-est}
\|\lambda\|_{M_\e^1}\le C\big(c_0, \|f\|_{H^{s+\f12}}\big)\quad \text{for any}\quad \e\in \big(0,s-\f32\big).
\een

The remainder $R_f^-$ of the DN operator is given by
\begin{align}
R_f^-\psi
=&\Big[\big(T_{\zeta_1}T_A-T_{\zeta_1 A}\big)\Psi-T_{\zeta_1}(\pa_z-T_A)\Psi\nonumber\\
 &+\big(S_2(\pa_z\Psi)+T_{\pa_z\Psi}(\zeta_1-1)+R(\zeta_1-1,\pa_z\Psi)-T_{\nabla \Psi}\cdot\zeta_2-R(\nabla\Psi,\zeta_2)\big)\Big]\bigg|_{z=0}\nonumber\\
\triangleq & R_{1,f}^-\Psi+R_{2,f}^-\Psi+R_{3,f}^-\Psi.\label{eq:DN-R}
\end{align}

Similarly, we have
\ben\label{eq:DN-para-p}
\mathcal{N}^+_f\psi=T_{\lambda}\psi+R_f^+\psi,
\een
where $R_f^+$ has a similar representation as $R_f^-$.

\subsection{Sobolev estimates of DN operator}

Let us first prove the following Sobolev estimate for the remainder.
In the sequel, we denote by $K_{s,f}$  a constant depending on $c_0$
and $\|f\|_{H^{s}}$, which may be different from line to line.

\begin{lemma}\label{lem:remainder}
If $f\in H^{s+\f12}(\T^2)$ for $s>\f52$, then it holds that for any $\sigma\in \big[\f12,s-\f12\big]$,
\beno
\|R_f^\pm\psi\|_{H^\sigma}\le K_{s+\f12,f}\|\psi\|_{H^\sigma}.
\eeno
\end{lemma}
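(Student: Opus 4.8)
The plan is to estimate each of the three pieces $R_{1,f}^-$, $R_{2,f}^-$, $R_{3,f}^-$ in \eqref{eq:DN-R} separately, relying on the symbolic calculus for paradifferential operators collected in the appendix and on the elliptic estimates in the strip for the harmonic extension $\Psi$ (equivalently $\mathcal H_f^-\psi$). Throughout I will use that, by the elliptic estimate in a strip, $\Psi$ obeys weighted estimates of the form $\|\Lambda_z^{m}\pa_z^k\Psi\|_{X^{\sigma}}\lesssim K_{s+\f12,f}\|\psi\|_{H^\sigma}$ on $z\in(-1,0)$ in the relevant anisotropic Sobolev spaces, and in particular the trace $\pa_z\Psi|_{z=0}$ and $\nabla\Psi|_{z=0}$ lie in $H^{\sigma}(\T^2)$ with norm controlled by $K_{s+\f12,f}\|\psi\|_{H^\sigma}$ for $\sigma\in[\f12,s-\f12]$; this is the technical input that lets one trade interior elliptic regularity for boundary regularity. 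The bound on $\|\lambda\|_{M^1_\e}$ in \eqref{eq:lambda-est} and the bound \eqref{eq:DN-zeta-Hs} on $\zeta_1-1,\zeta_2$ in $H^{s-\f12}$ are the coefficient estimates that feed the symbolic calculus.

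For $R_{3,f}^-$, which consists of the paraproduct remainders $S_2(\pa_z\Psi)$, $T_{\pa_z\Psi}(\zeta_1-1)$, $R(\zeta_1-1,\pa_z\Psi)$, $T_{\nabla\Psi}\cdot\zeta_2$, $R(\nabla\Psi,\zeta_2)$ evaluated at $z=0$, I would simply invoke the standard paraproduct estimates from the appendix: $R(a,b)$ maps $H^{\alpha}\times H^{\beta}\to H^{\alpha+\beta-\f{d}2}$ when $\alpha+\beta>0$ (here $d=2$), and $T_ab$ with $a\in L^\infty$ (or $a\in H^{s-\f12}\hookrightarrow L^\infty$) is bounded on $H^\sigma$. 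Since $\zeta_1-1,\zeta_2\in H^{s-\f12}$ with $s-\f12>2$ and $\pa_z\Psi|_{z=0},\nabla\Psi|_{z=0}\in H^{\sigma}$ with the right bound, every term lands in $H^{\sigma}$ with norm $\le K_{s+\f12,f}\|\psi\|_{H^{\sigma}}$, using $\sigma+(s-\f12)-1>0$ for the remainder terms and $\sigma\le s-\f12$ for $S_2$. For $R_{2,f}^-\Psi=-T_{\zeta_1}(\pa_z-T_A)\Psi|_{z=0}$, the key is that $(\pa_z-T_A)\Psi$ is \emph{not} merely the residual of a paralinearization but gains regularity: by the construction of the approximate factorization $\pa_z^2 = (\pa_z - T_A)(\pa_z + T_A) + $ (smoothing), one shows $(\pa_z-T_A)\Psi$ solves a forced transport-type equation whose source is of order $-1$ relative to $\Psi$, so that its trace at $z=0$ is bounded in $H^{\sigma+1}$, hence a fortiori $T_{\zeta_1}$ applied to it is bounded in $H^\sigma$ (in fact with room to spare). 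This is where one must be careful to track that the parabolic/elliptic smoothing really does produce one extra derivative uniformly down to the boundary $z=0$, using the elliptic-estimate-in-a-strip of the appendix.

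For $R_{1,f}^-\Psi=(T_{\zeta_1}T_A - T_{\zeta_1 A})\Psi|_{z=0}$, I would use the symbolic calculus bound for the composition of two paradifferential operators: $T_aT_b - T_{ab} = T_{(\text{lower order})} + $ (a $\rho$-smoothing operator), where the gain is one derivative at the price of one derivative of the symbols. Since $A$ is (to leading order) $|\xi|$ times a function built from $\na f$, i.e. a symbol of order $1$ with coefficients in $H^{s-\f12}$, and $\zeta_1\in H^{s-\f12}$, the composition error is an operator of order $0$ with operator norm on $H^\sigma$ controlled by $\|\zeta_1\|_{H^{s-\f12}}\|A\|_{M^1_\e}\le K_{s+\f12,f}$; composing with the trace bound on $\Psi$ gives the claim. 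The argument for $R_f^+$ is identical after replacing the strip $z\in(-1,0)$ by the corresponding strip for $\Omega_*^+$ and using the matching boundary condition at $z=+1$.

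\textbf{Main obstacle.} The routine parts are the paraproduct estimates in $R_{3,f}^-$ and the symbolic-calculus composition estimate in $R_{1,f}^-$; these are bookkeeping once the appendix is in place. The genuine difficulty is $R_{2,f}^-$: one must prove that the paralinearization residual $(\pa_z-T_A)\Psi$ gains a full derivative \emph{uniformly up to the boundary} $z=0$, which requires a careful elliptic (or backward-parabolic) estimate in the strip for the first-order equation satisfied by $(\pa_z-T_A)\Psi$, with source terms that themselves depend on $\na f\in H^{s-\f12}$, and with the low-regularity endpoint $\sigma=\f12$ handled without losing the positivity/coercivity that makes the strip estimate work. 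Getting the dependence on $\|f\|_{H^{s+\f12}}$ (rather than a higher norm) out of this step is the crux, and it is precisely where the sharp anisotropic elliptic estimate of the appendix, together with \eqref{eq:DN-zeta-Hs}, must be used with care.
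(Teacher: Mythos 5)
Your decomposition into $R_{1,f}^\pm, R_{2,f}^\pm, R_{3,f}^\pm$ and the assignment of tools to each piece (symbolic calculus for $R_1$, the parabolic decoupling estimate for $R_2$, Bony estimates for $R_3$) is exactly the paper's proof, and you correctly single out $R_2$ as the piece that needs the extra smoothing coming from the factorization $(\pa_z-T_a)(\pa_z-T_A)$. The one thing to watch is a consistent off-by-one error in your trace regularity claims. Proposition~\ref{prop:elliptic} gives $\|\nabla_{x,z}\Psi\|_{X^{\sigma-1}(I)}\le K_{s+\f12,f}\|\psi\|_{H^{\sigma}}$, so the traces $\pa_z\Psi(\cdot,0)$ and $\nabla\Psi(\cdot,0)$ lie in $H^{\sigma-1}$, \emph{not} $H^{\sigma}$ as you state; this is forced by the fact that $\mathcal N_f^\pm$ is first order, so its output must lose exactly one derivative from $\psi$. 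Likewise (\ref{eq:W-est}) with $\e=1$ gives $(\pa_z-T_A)\Psi(\cdot,0)\in H^{\sigma}$ (one derivative better than the naive $H^{\sigma-1}$), not $H^{\sigma+1}$. Your conclusion still follows because your intermediate claims are stronger than needed rather than weaker, but the corrected indices show the $R_3$ estimate is tight: by Lemma~\ref{lem:bony}, $\|R(\zeta_1-1,\pa_z\Psi|_{z=0})\|_{H^\sigma}\lesssim\|\zeta_1-1\|_{H^{s-\f12}}\|\pa_z\Psi|_{z=0}\|_{H^{\sigma+\f32-s}}$, and $\sigma-1\ge\sigma+\f32-s$ holds iff $s\ge\f52$, which is precisely the hypothesis — there is no room to spare, in contrast to what your accounting suggests.
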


\begin{proof}
Due to $s>\f52$, we know from (\ref{eq:lambda-est}) that
$A\in \Gamma_1^1(\T^2)$. Then it follows from Proposition \ref{prop:symcal} and Proposition \ref{prop:elliptic} that
\beno
\|R_{1,f}^-\psi\|_{H^\sigma}\le K_{s+\f12,f}\|\psi\|_{H^\sigma}.
\eeno
Thanks to $s>\f52$ and $\sigma\in \big[\f12,s-\f12\big]$, we infer from Lemma \ref{lem:bony} and Proposition \ref{prop:elliptic} that
\begin{align*}
\|R_{3,f}^-\psi\|_{H^\sigma}\le& K_{s+\f12,f}\|\na_{x,z}\Psi(x,0)\|_{H^{\sigma-1}}\le K_{s+\f12,f}\|\psi\|_{H^\sigma}.
\end{align*}
Due to $s>\f52$ and $\sigma\ge \f12$, we may apply (\ref{eq:W-est}) with $\e=1$ to obtain
\begin{align*}
\|R_{2,f}^-\psi\|_{H^\sigma}\le K_{s-\f12,f}\|(\pa_z-T_A)\Psi(x,0)\|_{H^{\sigma}}\le K_{s+\f12,f}\|\psi\|_{H^\sigma}.
\end{align*}
The proof is the same for $R_f^+$.
\end{proof}

\begin{proposition}\label{prop:DN-Hs}
If $f\in H^{s+\f12}(\T^2)$ for $s>\f52$, then it holds that for any $\sigma\in \big[-\f12,s-\f12\big]$,
\beno
&&\|\mathcal{N}^\pm_f\psi\|_{H^{\sigma}}\le  K_{s+\f12,f}\|\psi\|_{H^{\sigma+1}}.
\eeno
Moreover, it holds that for any $\sigma\in \big[\f12,s-\f12\big]$,
\beno
\|\big(\mathcal{N}^+_f-\mathcal{N}^-_f\big)\psi\|_{H^\sigma}\le K_{s+\f12,f}\|\psi\|_{H^\sigma}.
\eeno
\end{proposition}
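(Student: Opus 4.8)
The plan is to read off both estimates from the paralinearization formulas \eqref{eq:DN-para-m}, \eqref{eq:DN-para-p} together with the symbol bound \eqref{eq:lambda-est} and the remainder estimate of Lemma~\ref{lem:remainder}. For the first estimate, I would write $\mathcal{N}^\pm_f\psi=T_\lambda\psi+R_f^\pm\psi$ and treat the two pieces separately. Since $\lambda\in\Gamma^1_\e(\T^2)$ with $\|\lambda\|_{M^1_\e}\le K_{s+\f12,f}$ for $\e\in(0,s-\f32)$, the standard action of paradifferential operators of order one on Sobolev spaces (Proposition~\ref{prop:symcal} in the appendix) gives $\|T_\lambda\psi\|_{H^\sigma}\le K_{s+\f12,f}\|\psi\|_{H^{\sigma+1}}$ for every real $\sigma$; in particular for $\sigma\in[-\f12,s-\f12]$. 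For the remainder, Lemma~\ref{lem:remainder} already gives $\|R_f^\pm\psi\|_{H^\sigma}\le K_{s+\f12,f}\|\psi\|_{H^\sigma}$ for $\sigma\in[\f12,s-\f12]$, which is certainly bounded by $K_{s+\f12,f}\|\psi\|_{H^{\sigma+1}}$. To cover the remaining range $\sigma\in[-\f12,\f12)$ of the proposition, I would either extend Lemma~\ref{lem:remainder} downward (the same proof applies, since the paraproduct and elliptic estimates used there are valid for $\sigma\ge-\f12$ once $s>\f52$), or, more cheaply, use the self-adjointness of $\mathcal{N}^\pm_f$ from Lemma~\ref{lem:DN}(1) together with a duality argument: for $\sigma\in[-\f12,\f12]$ and $\phi\in H^{-\sigma}$,
\[
(\mathcal{N}^\pm_f\psi,\phi)=(\psi,\mathcal{N}^\pm_f\phi)\le\|\psi\|_{H^{\sigma+1}}\|\mathcal{N}^\pm_f\phi\|_{H^{-\sigma-1}}\le K_{s+\f12,f}\|\psi\|_{H^{\sigma+1}}\|\phi\|_{H^{-\sigma}},
\]
where the estimate of $\mathcal{N}^\pm_f\phi$ in $H^{-\sigma-1}$ uses the already-established bound in the range $-\sigma\in[\f12,s-\f12]$. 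Taking the supremum over $\phi$ gives the claim for $\sigma\in[-\f12,\f12]$, and combined with the direct argument for $\sigma\in[\f12,s-\f12]$ this covers the whole interval.

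\textbf{The difference estimate.} For the second assertion, the key point is that the principal symbol $\lambda$ is the \emph{same} for $\mathcal{N}^+_f$ and $\mathcal{N}^-_f$, so subtracting \eqref{eq:DN-para-m} from \eqref{eq:DN-para-p} cancels the order-one operator $T_\lambda$ entirely:
\[
\big(\mathcal{N}^+_f-\mathcal{N}^-_f\big)\psi=R_f^+\psi-R_f^-\psi.
\]
Now Lemma~\ref{lem:remainder} applies to both terms: for $\sigma\in[\f12,s-\f12]$ we get $\|R_f^\pm\psi\|_{H^\sigma}\le K_{s+\f12,f}\|\psi\|_{H^\sigma}$, hence $\|(\mathcal{N}^+_f-\mathcal{N}^-_f)\psi\|_{H^\sigma}\le K_{s+\f12,f}\|\psi\|_{H^\sigma}$, which is exactly the stated gain of one derivative over the naive bound. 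I should double-check that the two remainders $R_f^\pm$ are genuinely built from the same pieces $R_{1,f}^\pm+R_{2,f}^\pm+R_{3,f}^\pm$ up to the sign of the harmonic extension, so that Lemma~\ref{lem:remainder} — which is proved for $R_f^-$ and asserted verbatim for $R_f^+$ — covers both; the excerpt says "$R_f^+$ has a similar representation", so this is in hand.

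\textbf{Main obstacle.} The only nontrivial point is the low-regularity end $\sigma\in[-\f12,\f12)$ in the first estimate, where Lemma~\ref{lem:remainder} as stated does not directly apply. The duality route sketched above is clean but requires that the $H^{-\sigma-1}$-bound on $\mathcal{N}^\pm_f\phi$ hold for $-\sigma-1\in[-\f12,0)$, i.e. exactly the range we are trying to establish — so one must first nail down $\sigma\in[0,\f12)$ by a direct (extended Lemma~\ref{lem:remainder}) argument and then bootstrap, or simply observe that extending the proof of Lemma~\ref{lem:remainder} to all $\sigma\in[-\f12,s-\f12]$ is routine: the bounds on $R_{1,f}^-$ via Proposition~\ref{prop:symcal}, on $R_{3,f}^-$ via Lemma~\ref{lem:bony}, and on $R_{2,f}^-$ via the elliptic estimate in the strip (Proposition~\ref{prop:elliptic}) are all stable down to $\sigma=-\f12$ when $s>\f52$. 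I would state this extension explicitly and then the proposition follows immediately by summing the $T_\lambda$ and remainder contributions.
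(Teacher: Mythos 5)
Your proposal reproduces the paper's approach for the range $\sigma\in[\f12,s-\f12]$ (paralinearization $\mathcal{N}^\pm_f=T_\lambda+R_f^\pm$, Proposition~\ref{prop:symcal} for $T_\lambda$, Lemma~\ref{lem:remainder} for $R_f^\pm$) and for the difference estimate (cancellation of the common principal symbol), so those parts are fine. The divergence is in how you propose to close the gap $\sigma\in[-\f12,\f12)$ of the first estimate. The paper does not touch Lemma~\ref{lem:remainder} at all there: it observes that the single endpoint $\sigma=-\f12$ follows from the variational characterization in Lemma~\ref{lem:DN}, namely $(\mathcal{N}^\pm_f\psi,\phi)=\int_{\Om_f^\pm}\nabla\mathcal{H}_f^\pm\psi\cdot\nabla\mathcal{H}_f^\pm\phi\,dx\le C\|\psi\|_{H^{1/2}}\|\phi\|_{H^{1/2}}$, and then interpolates between $\sigma=-\f12$ and $\sigma=\f12$. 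This is shorter and needs nothing beyond Lemma~\ref{lem:DN}.

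Both of your alternatives for $\sigma\in[-\f12,\f12)$ have genuine issues. The duality argument as written is circular: with $\sigma\in[-\f12,\f12)$, the exponent at which you invoke the already-proved bound is $-\sigma-1\in(-\f32,-\f12]$, which lies entirely below the established range $[\f12,s-\f12]$ (your stated range $-\sigma-1\in[-\f12,0)$ is a slip), so the argument does not bootstrap as claimed. The ``routine extension of Lemma~\ref{lem:remainder}'' is also not as routine as you suggest: the treatment of the piece $R_{2,f}^-$ in the paper's proof of Lemma~\ref{lem:remainder} explicitly uses the estimate~\eqref{eq:W-est} with $\e=1$, which requires $\sigma\ge\f12$; to push below $\f12$ one must take $\e<\sigma+\f12$ and thereby loses a fraction of a derivative, which is acceptable here only because Proposition~\ref{prop:DN-Hs} asks for one full derivative of gain rather than the zero-order bound of Lemma~\ref{lem:remainder}, and the endpoint $\sigma=-\f12$ itself still needs a separate argument. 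The paper's route through Lemma~\ref{lem:DN} plus interpolation avoids all of this and is what you should use.
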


\begin{proof}
In the case of $\sigma\in \big[\f12,s-\f12\big]$, the first inequality of the lemma follows from Proposition \ref{prop:symcal} and Lemma \ref{lem:remainder}.
The case of $\sigma=-\f12$ follows from Lemma \ref{lem:DN}. The other cases can be deduced by the interpolation.

Using the formula
\beno
\big(\mathcal{N}^+_f-\mathcal{N}^-_f\big)\psi=\big(R^+_f-R^-_f\big)\psi,
\eeno
the second inequality  follows easily from Lemma \ref{lem:remainder}.
\end{proof}

Next we study the inverse of $\mathcal{N}_f^\pm$.

\begin{proposition}\label{prop:DN-inverse}
If $f\in H^{s+\f12}(\T^2)$ for $s>\f52$, then it holds that for any $\sigma\in \big[-\f12,s-\f12\big]$,
\beno
&&\|\mathcal{G}^\pm_f\psi\|_{H^{\sigma+1}}\le  K_{s+\f12,f}\|\psi\|_{H^{\sigma}},
\eeno
where $\mathcal{G}^\pm_f\triangleq\big(\mathcal{N}^\pm_f\big)^{-1}$.
\end{proposition}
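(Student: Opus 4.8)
The plan is to treat $\mathcal{N}^\pm_f$ as an elliptic paradifferential operator of order one and to invert it by a parametrix, using the positivity of Lemma \ref{lem:DN} at the lowest regularity and then bootstrapping one derivative at a time. Set $u\eqdefa\mathcal{G}^\pm_f\psi$; by Lemma \ref{lem:DN}, item 3, we may assume $\psi$ has zero mean, so that $u$ has zero mean and $\mathcal{N}^\pm_f u=\psi$. It is enough to prove the a priori bound $\|u\|_{H^{\sigma+1}}\le K_{s+\f12,f}\|\psi\|_{H^{\sigma}}$ for smooth zero‑mean $u$, the statement then following by density. By the paralinearization \eqref{eq:DN-para-m} (resp. \eqref{eq:DN-para-p}), $T_\lambda u=\psi-R^\pm_f u$. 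For the endpoint $\sigma=-\f12$, Lemma \ref{lem:DN}, item 2, together with the pairing $(\mathcal{N}^\pm_f u,u)=(\psi,u)$, gives $c\,\|u\|_{H^\f12}^2\le\|\psi\|_{H^{-\f12}}\|u\|_{H^\f12}$, hence $\|u\|_{H^\f12}\le K_{s+\f12,f}\|\psi\|_{H^{-\f12}}$.

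The parametrix is built from $\lambda^{-1}$. Since $s>\f52$, \eqref{eq:lambda-est} gives $\lambda\in\Gamma_1^1(\T^2)$ with norm $\le K_{s+\f12,f}$; moreover $\lambda(x,\xi)=\sqrt{(1+|\na f|^2)|\xi|^2-(\na f\cdot\xi)^2}$ satisfies $|\xi|\le\lambda(x,\xi)\le C|\xi|$, so $\lambda$ is elliptic and $\lambda^{-1}\in\Gamma_1^{-1}(\T^2)$ with norm $\le K_{s+\f12,f}$. By the symbolic calculus (Proposition \ref{prop:symcal}), $T_{\lambda^{-1}}T_\lambda=\mathrm{Id}+E$ with $E$ of order $-1$, so applying $T_{\lambda^{-1}}$ to $T_\lambda u=\psi-R^\pm_f u$,
\[
u=T_{\lambda^{-1}}\psi-T_{\lambda^{-1}}R^\pm_f u-Eu .
\]
For $\sigma\in\big[\f12,s-\f12\big]$, taking $H^{\sigma+1}$ norms and using that $T_{\lambda^{-1}}$ is of order $-1$, that $E$ is of order $-1$, and Lemma \ref{lem:remainder},
\[
\|u\|_{H^{\sigma+1}}\le K_{s+\f12,f}\big(\|\psi\|_{H^\sigma}+\|R^\pm_f u\|_{H^\sigma}+\|u\|_{H^\sigma}\big)\le K_{s+\f12,f}\big(\|\psi\|_{H^\sigma}+\|u\|_{H^\sigma}\big).
\]

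It remains to close the estimate. Taking $\sigma=\f12$ and inserting the endpoint bound yields $\|u\|_{H^{\f32}}\le K_{s+\f12,f}\big(\|\psi\|_{H^{\f12}}+\|\psi\|_{H^{-\f12}}\big)\le K_{s+\f12,f}\|\psi\|_{H^{\f12}}$; interpolating the bounds at $\sigma=-\f12$ and $\sigma=\f12$ covers $\sigma\in(-\f12,\f12)$. For $\sigma\in(\f12,s-\f12]$ one proceeds by induction in unit steps: $\sigma-1\in(-\f12,s-\f32]$ is already controlled, so $\|u\|_{H^{\sigma}}\le K_{s+\f12,f}\|\psi\|_{H^{\sigma-1}}\le K_{s+\f12,f}\|\psi\|_{H^{\sigma}}$, and the last display gives $\|u\|_{H^{\sigma+1}}\le K_{s+\f12,f}\|\psi\|_{H^{\sigma}}$; finitely many steps reach $s-\f12$, and the argument for $\mathcal{G}^+_f$ and $\mathcal{G}^-_f$ is identical. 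I expect the delicate part to be the range $\sigma\in[-\f12,\f12)$, where Lemma \ref{lem:remainder} does not apply and one has to route through the positivity estimate and interpolation, together with making sure the parametrix remainder $E$ genuinely has order $\le-1$ (which uses $s>\f52$, so that $\lambda\in\Gamma_1^1$, and the ellipticity $\lambda\simeq|\xi|$), so that each bootstrap step gains a full derivative and the induction terminates.
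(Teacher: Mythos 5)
Your proof is correct and takes essentially the same route as the paper: both paralinearize $\mathcal{N}^\pm_f$ via \eqref{eq:DN-para-m}/\eqref{eq:DN-para-p}, invert $T_\lambda$ with the parametrix $T_{\lambda^{-1}}$, control the parametrix error and $R^\pm_f$ by Proposition~\ref{prop:symcal} and Lemma~\ref{lem:remainder}, and anchor the low end at $\sigma=-\f12$ using the coercivity from Lemma~\ref{lem:DN}. The only cosmetic difference is how the resulting inequality $\|u\|_{H^{\sigma+1}}\le K(\|\psi\|_{H^\sigma}+\|u\|_{H^\sigma})$ is closed: the paper absorbs $\|u\|_{H^\sigma}$ in one shot via the interpolation inequality $\|u\|_{H^\sigma}\le \e\|u\|_{H^{\sigma+1}}+C_\e\|u\|_{H^{1/2}}$, while you bootstrap in unit steps from $\sigma=\f12$; both are standard and equivalent.
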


\begin{proof}
Let $\psi=\mathcal{N}^+_f\phi\in H^{\sigma}_0(\T^2)$, i.e., $\phi=\mathcal{G}^+_f\psi$.
Then we have
\beno
\psi=T_\lambda\phi+R_f^+\phi,
\eeno
which gives
\beno
\phi=T_{\lambda^{-1}}\psi-\big(T_{\lambda^{-1}}T_{\lambda}-1)\phi-T_{\lambda^{-1}}R^+_f\phi.
\eeno
Then it follows from Proposition \ref{prop:symcal} and Lemma \ref{lem:remainder} that for $\sigma\ge \f12$,
\begin{align*}
\|\phi\|_{H^{\sigma+1}}\le  K_{s-\f12,f}\|\psi\|_{H^\sigma}+K_{s+\f12,f}\|\phi\|_{H^\sigma}.
\end{align*}
On the other hand, Lemma \ref{lem:DN}  implies that
\beno
\|\phi\|_{H^\f12}\le C\big(c_0, \|f\|_{W^{1,\infty}}\big)\|\psi\|_{H^{-\f12}}.
\eeno
Thus, we get by the interpolation that
 \begin{align*}
\|\phi\|_{H^{\sigma+1}}\le&  K_{s-\f12,f}\|\psi\|_{H^\sigma}+K_{s+\f12,f}\|\phi\|_{H^\f12}+\f12\|\phi\|_{H^{\sigma+1}}\\
\le& K_{s+\f12,f}\|\psi\|_{H^\sigma}+\f12\|\phi\|_{H^{\sigma+1}},
\end{align*}
which implies the desired inequality for $\sigma\in [\f12,s-\f12]$. The case of $\sigma\in [-\f12,\f12)$
can be proved by the interpolation.
\end{proof}

\subsection{Commutator estimates of DN operator}
We present some commutator estimates of DN operator.
Although they will not be used in this paper, they are independent of interest and may be useful for the zero-surface tension limit problem.

\begin{proposition}\label{prop:DN-com}
If $f\in H^{s+\f12}(\T^2)$ for $s>\f52$, then it holds that for any $\sigma\in \big[\f32,s-\f12\big]$,
\beno
&&\big\|[\pa_i,\mathcal{N}^\pm_f]\psi\big\|_{H^{\sigma-1}}\le  K_{s+\f12,f}\|\psi\|_{H^{\sigma}}.
\eeno
For any $\sigma\in \big(1,s-\f12\big]$, we have
\beno
\big\|[a,\mathcal{N}^\pm_f]\psi\big\|_{H^{\sigma}}\le   K_{s+\f12,f}\|a\|_{H^{\sigma+1}}\|\psi\|_{H^{\sigma}}.
\eeno
\end{proposition}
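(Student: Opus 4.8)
The plan is to prove both commutator estimates by exploiting the paralinearization $\mathcal{N}^\pm_f\psi=T_\lambda\psi+R_f^\pm\psi$ from \eqref{eq:DN-para-m}–\eqref{eq:DN-para-p}, reducing each commutator to a commutator with the paradifferential operator $T_\lambda$ plus a commutator with the lower-order remainder $R_f^\pm$. For the first estimate, I would write
\[
[\pa_i,\mathcal{N}^\pm_f]\psi=[\pa_i,T_\lambda]\psi+[\pa_i,R_f^\pm]\psi.
\]
Since $\pa_i$ has symbol $i\xi_i$, which is a Fourier multiplier (its symbol is independent of $x$), the symbolic calculus of Proposition \ref{prop:symcal} shows that $[\pa_i,T_\lambda]=T_{i\xi_i}T_\lambda-T_\lambda T_{i\xi_i}$ has principal symbol given by the Poisson bracket, which here reduces to $T_{\pa_i\lambda(x,\xi)}$ modulo an operator of order $0$; since $\pa_i\lambda\in\Gamma^1_\e$ with $M^1_\e$-norm controlled by $\|f\|_{H^{s+1/2}}$ (differentiating $\lambda=\sqrt{(1+|\na f|^2)|\xi|^2-(\na f\cdot\xi)^2}$ in $x_i$ costs one derivative on $f$, hence $K_{s+\f12,f}$), this term maps $H^\sigma\to H^{\sigma-1}$ with the claimed bound. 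For $[\pa_i,R_f^\pm]$ I would commute $\pa_i$ through the representation \eqref{eq:DN-R}: each factor in $R_{1,f}^\pm,R_{2,f}^\pm,R_{3,f}^\pm$ either involves the symbols $\zeta_1,\zeta_2,A$ (whose $x$-derivatives cost one more derivative of $f$, i.e.\ $H^{s-1/2}$ regularity, still absorbed in $K_{s+\f12,f}$) or the harmonic extension $\Psi$ (where $\pa_i$ can be exchanged for an interior derivative using the elliptic estimate of Proposition \ref{prop:elliptic}, at the cost of the same constant), so the net effect is to upgrade each bound in Lemma \ref{lem:remainder} by one $f$-derivative and shift the target space down by one, giving $\|[\pa_i,R_f^\pm]\psi\|_{H^{\sigma-1}}\le K_{s+\f12,f}\|\psi\|_{H^\sigma}$.

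For the second estimate, with $a\in H^{\sigma+1}(\T^2)$ a multiplier, I would again split
\[
[a,\mathcal{N}^\pm_f]\psi=[a,T_\lambda]\psi+[a,R_f^\pm]\psi
\]
and further decompose $a\,(\cdot)=T_a(\cdot)+T_{(\cdot)}a+R(a,\cdot)$ by Bony's decomposition \eqref{Bony}. The paraproduct-paraproduct piece $[T_a,T_\lambda]$ is, by symbolic calculus, of order $0$ with operator norm $\lesssim\|a\|_{H^{\sigma+1}}\|\lambda\|_{M^1_\e}$ — here one must be slightly careful: the gain of one derivative in the commutator requires $\sigma+1$ derivatives on $a$ and $\e<\sigma-1$ regularity of the symbol in $x$, which is exactly why the hypothesis $\sigma>1$ appears. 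The terms $T_{(\cdot)}a$ and $R(a,\cdot)$, being already smoothing or of the "bad frequency" type, are handled directly by the paraproduct/remainder estimates of Lemma \ref{lem:bony} together with the mapping property of $\mathcal{N}^\pm_f$ from Proposition \ref{prop:DN-Hs}, and they contribute $H^\sigma$ bounds of the form $\|a\|_{H^{\sigma+1}}\|\psi\|_{H^\sigma}$. Finally $[a,R_f^\pm]$ is controlled by combining Lemma \ref{lem:remainder} with the product estimate $\|a\,g\|_{H^\sigma}\lesssim\|a\|_{H^{\sigma+1}}\|g\|_{H^\sigma}$ valid for $\sigma>1$ on $\T^2$ (this again pins down the range $\sigma>1$).

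\textbf{Main obstacle.} The delicate point is tracking the exact regularity budget: one must check that commuting $\pa_i$ (resp.\ $a$) past the paradifferential operator $T_\lambda$ genuinely gains a full derivative, which needs the symbol $\lambda$ to be slightly better than $\Gamma^1_0$ — precisely $\Gamma^1_\e$ with $\e>0$ — and this is where the margin between $s>\f52$ (so $\e\in(0,s-\f32)$ can be taken positive) and the required $\sigma$-range is consumed. Equally, in the remainder $R_f^\pm$ the terms $R^\pm_{1,f}$ and $R^\pm_{2,f}$ involve the operator $\pa_z-T_A$ and the composition defect $T_{\zeta_1}T_A-T_{\zeta_1A}$; commuting $\pa_i$ or $a$ through these requires re-examining the elliptic estimates in the strip (Proposition \ref{prop:elliptic}) applied to $\pa_i\Psi$ or to the "good unknown", and verifying that no derivative is lost there. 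I expect the bookkeeping for $R^\pm_{2,f}$ — showing that $(\pa_z-T_A)\Psi$ commutes with $\pa_i$ up to terms of acceptable order — to be the most technical step, but it is structurally identical to the estimate already carried out in the proof of Lemma \ref{lem:remainder}, just with all indices shifted, so no genuinely new idea is needed.
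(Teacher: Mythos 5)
Your overall approach — split $\mathcal{N}^\pm_f=T_\lambda+R_f^\pm$ and handle the two pieces separately — is exactly the paper's, and for the second estimate your decomposition of $[a,T_\lambda]$ via $[T_a,T_\lambda]$ plus paraproduct remainders, together with the $H^\sigma$-algebra property for $\sigma>1$, matches the paper's proof essentially step for step. The main divergence, and where you take an unnecessarily long detour, is the remainder part of the first estimate.

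For $[\pa_i,R_f^\pm]\psi$ you propose to differentiate the full representation \eqref{eq:DN-R}, tracking one extra derivative landing on $\zeta_1,\zeta_2,A$ or on $\Psi$ and re-running the elliptic estimates. This can be made to close, but it is a re-derivation of a fact already available: the paper simply notes that $R_f^\pm$ is an order-zero operator with $\|R_f^\pm\phi\|_{H^\mu}\le K_{s+\f12,f}\|\phi\|_{H^\mu}$ for all $\mu\in[\f12,s-\f12]$ (Lemma \ref{lem:remainder}), so there is no need to exploit any cancellation in the commutator — one bounds the two halves separately, $\|\pa_iR_f^\pm\psi\|_{H^{\sigma-1}}\le\|R_f^\pm\psi\|_{H^\sigma}$ and $\|R_f^\pm\pa_i\psi\|_{H^{\sigma-1}}\le K_{s+\f12,f}\|\pa_i\psi\|_{H^{\sigma-1}}$. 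This also explains, which your write-up does not, the precise lower endpoint $\sigma\ge\f32$ in the first estimate: it is exactly the condition $\sigma-1\ge\f12$ needed to apply Lemma \ref{lem:remainder} to $\pa_i\psi$. (For $[\pa_i,T_\lambda]$, note that the cleanest route is the exact identity $[\pa_i,T_\lambda]=T_{\pa_i\lambda}$, available because $\pa_i$ is a constant-coefficient Fourier multiplier; your symbolic-calculus Poisson-bracket argument also works here since $s>\f52$ allows $\lambda\in\Gamma^1_1$, but the identity makes the regularity bookkeeping trivial.) In short: no mathematical gap, but the paper's treatment of the remainder is significantly more economical and you should recognize that Lemma \ref{lem:remainder} already contains everything you need there, rather than re-examining the parabolic estimates for $R^\pm_{2,f}$.
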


\begin{proof}
Let us first prove the first commutator estimate. We get by (\ref{eq:DN-para-m}) and (\ref{eq:DN-para-p}) that
\beno
[\pa_i,\mathcal{N}^\pm_f]\psi=[\pa_i,T_\lambda]\psi+\pa_iR_f^\pm\psi-R_f^\pm\pa_i\psi,
\eeno
which together with Proposition \ref{prop:symcal} and Lemma \ref{lem:remainder} gives the first inequality of the lemma.

By (\ref{eq:DN-para-m}) and (\ref{eq:DN-para-p}) again, we have
\beno
[a,\mathcal{N}^\pm_f]\psi=[a, T_\lambda]\psi+aR_f^\pm\psi-R^\pm_f(a\psi).
\eeno
For $\sigma>1$, $H^\sigma(\T^2)$ is an algebra. Thus, we infer from Lemma \ref{lem:remainder} that
\ben\label{eq:a-com}
\|aR_f^\pm\psi\|_{H^\sigma}+\|R^\pm_f(a\psi)\|_{H^\sigma}\le K_{s+\f12,f}\|a\|_{H^\sigma}\|\psi\|_{H^{\sigma}}.
\een
We write
\begin{align*}
[a, T_\lambda]\psi=[T_a, T_\lambda]\psi+\big(a-T_a)T_\lambda\psi-T_\lambda(a-T_a)\psi.
\end{align*}
By Proposition \ref{prop:symcal} and Sobolev embedding, we get
\begin{align*}
\|[T_a, T_\lambda]\psi\|_{H^\sigma}\le  K_{s+\f12,f}\|a\|_{W^{1,\infty}}\|\psi\|_{H^{\sigma}}\le K_{s+\f12,f}\|a\|_{H^{\sigma+1}}\|\psi\|_{H^{\sigma}}.
\end{align*}
Using Bony's decomposition (\ref{Bony}) and Lemma \ref{lem:bony}, we have
\begin{align*}
\|(a-T_a)T_\lambda\psi\|_{H^\sigma}\le C\|a\|_{H^{\sigma+1}}\|T_\lambda\psi\|_{H^{\sigma-1}}\le K_{s-\f12,f}\|a\|_{H^{\sigma+1}}\|\psi\|_{H^{\sigma}}.
\end{align*}
Similarly, we have
\begin{align*}
\|T_\lambda(a-T_a)\psi\|_{H^\sigma}\le K_{s-\f12,f}\|(a-T_a)\psi\|_{H^{\sigma+1}}\le  K_{s-\f12,f}\|a\|_{H^{\sigma+1}}\|\psi\|_{H^{\sigma}}.
\end{align*}
This shows that
\begin{align*}
\big\|[a, T_\lambda]\psi\big\|_{H^\sigma}\le K_{s+\f12,f}\|a\|_{H^{\sigma+1}}\|\psi\|_{H^{\sigma}},
\end{align*}
which along with (\ref{eq:a-com}) gives the second commutator estimate.
\end{proof}

Finally, we study the commutator estimate between the DN operator and the time derivative.

\begin{proposition}\label{prop:DN-com-time}
If $f\in H^{s+\f12}(\T^2)$ and $\pa_tf\in H^{s-\f12}(\T^2)$ for $s>\f52$, then it holds that for any $\sigma\in \big[\f32,s-\f12\big]$,
\beno
&&\big\|[\pa_t,\mathcal{N}^\pm_f]\psi\big\|_{H^{\sigma-1}}\le  C\big(c_0, \|f\|_{H^{s+\f12}}, \|\pa_tf\|_{H^{s-\f12}}\big)\|\psi\|_{H^{\sigma}}.
\eeno
\end{proposition}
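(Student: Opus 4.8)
The plan is to differentiate the paralinearized formula $\mathcal{N}^-_f\psi=T_\lambda\psi+R_f^-\psi$ (and its $+$ analogue) in time and estimate each resulting term. Differentiating gives
\[
[\pa_t,\mathcal{N}^\pm_f]\psi=(\pa_t T_\lambda)\psi+[\pa_t,R_f^\pm]\psi,
\]
where $(\pa_t T_\lambda)\psi\eqdefa T_{\pa_t\lambda}\psi$ is the paradifferential operator with symbol $\pa_t\lambda$. The first step is to control $\pa_t\lambda$. Since $\lambda(x,\xi)=\sqrt{(1+|\na f|^2)|\xi|^2-(\na f\cdot\xi)^2}$ is smooth in its dependence on $\na f$, the chain rule expresses $\pa_t\lambda$ as a smooth function of $\na f$ times $\na\pa_tf$; hence $\pa_t\lambda\in\Gamma^1_\e(\T^2)$ with
\[
\|\pa_t\lambda\|_{M^1_\e}\le C\big(c_0,\|f\|_{H^{s+\f12}}\big)\|\pa_tf\|_{H^{s-\f12}},
\]
using $s>\f52$ so that $\na\pa_tf\in H^{s-\f32}$ with $s-\f32>1$ controls the required $W^{1,\infty}$-type seminorms of the symbol (this is the analogue of \eqref{eq:lambda-est}). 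By Proposition \ref{prop:symcal} this yields $\|T_{\pa_t\lambda}\psi\|_{H^{\sigma-1}}\le C\big(c_0,\|f\|_{H^{s+\f12}},\|\pa_tf\|_{H^{s-\f12}}\big)\|\psi\|_{H^\sigma}$ for $\sigma\in[\f32,s-\f12]$.

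The second step is the remainder term $[\pa_t,R_f^\pm]\psi$. Here I would avoid differentiating the somewhat intricate formula \eqref{eq:DN-R} directly, and instead exploit the structure already available: write $R_f^\pm=\mathcal{N}^\pm_f-T_\lambda$, so $[\pa_t,R_f^\pm]\psi=[\pa_t,\mathcal{N}^\pm_f]\psi-T_{\pa_t\lambda}\psi$ — which is circular — so one must in fact reopen \eqref{eq:DN-R}. The cleaner route is to differentiate in time the elliptic problem defining $\Psi$ (the harmonic extension of $\psi$ in the flattened $z$-variable, via $\rho_\delta$): the coefficients of the elliptic operator depend on $\na\rho_\delta$ and $\pa_z\rho_\delta$, hence on $f$, so $\pa_t\Psi$ solves the same elliptic equation with a right-hand side that is linear in $\pa_t f$ (through $\pa_t\rho_\delta$) and in $\na_{x,z}\Psi$. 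Applying the elliptic estimate Proposition \ref{prop:elliptic} to this equation bounds $\|\na_{x,z}\pa_t\Psi(\cdot,0)\|_{H^{\sigma-1}}$ by $C\big(c_0,\|f\|_{H^{s+\f12}},\|\pa_tf\|_{H^{s-\f12}}\big)\|\psi\|_{H^\sigma}$. Then each of the three pieces $R^\pm_{1,f},R^\pm_{2,f},R^\pm_{3,f}$ in \eqref{eq:DN-R}, after applying $\pa_t$, becomes a sum of terms of two types: (a) one of the original multilinear expressions in $\zeta_1-1,\zeta_2,\na_{x,z}\Psi$ with one factor replaced by its time derivative ($\pa_t\zeta_i$, again smooth in $\na f$ times $\na\pa_t f$, controlled in $H^{s-\f32}$; or $\pa_t\na_{x,z}\Psi$, controlled as just described); and (b) commutator-type terms $[\pa_t,T_{\zeta_1}]$, $[\pa_t,T_A]$, $[\pa_t,S_2]$, i.e. paraproducts/symbol-operators whose symbol has been differentiated in time — handled exactly as in the proof of Lemma \ref{lem:remainder} but with the extra $\|\pa_tf\|_{H^{s-\f12}}$ factor. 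Summing, $\|[\pa_t,R_f^\pm]\psi\|_{H^{\sigma-1}}\le C\big(c_0,\|f\|_{H^{s+\f12}},\|\pa_tf\|_{H^{s-\f12}}\big)\|\psi\|_{H^\sigma}$ for $\sigma\in[\f32,s-\f12]$, and combining with the first step finishes the proof.

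The main obstacle is the bookkeeping in the second step: one must verify that differentiating the elliptic problem for $\Psi$ in time really produces only a forcing that is \emph{linear} in $\pa_t f$ with coefficients of acceptable regularity, so that no derivative is lost — in particular that $\pa_t\rho_\delta$ (the time derivative of the regularizing extension of $f$) sits in the right space, and that the boundary data for $\pa_t\Psi$ on $z=0$ is $\pa_t\psi$, which does not appear because we only want the commutator acting on a \emph{fixed} $\psi$. Equivalently: treat $\psi$ as time-independent, so $[\pa_t,\mathcal{N}^\pm_f]$ sees only the $f$-dependence of the operator. Once that reduction is made, every estimate is a direct rerun of Lemma \ref{lem:remainder} and Proposition \ref{prop:symcal} with symbols and extensions replaced by their time derivatives, the loss of exactly one derivative being absorbed by the gap between $H^{s+\f12}\ni f$ and the index $\sigma\le s-\f12$. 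I do not expect any genuinely new difficulty beyond this structural check, which is why the statement is placed here rather than deferred.
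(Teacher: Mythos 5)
Your proposal is essentially the paper's proof: you paralinearize, write $[\pa_t,\mathcal{N}^\pm_f]\psi=T_{\pa_t\lambda}\psi+[\pa_t,R_f^\pm]\psi$, bound $T_{\pa_t\lambda}$ via the symbol norm $M^1_\e(\pa_t\lambda)\lesssim \|\pa_tf\|_{H^{s-\f12}}$, and reduce the remainder to an elliptic estimate on the time-differentiated harmonic extension plus reruns of Lemma~\ref{lem:remainder} with one time-differentiated factor. The one place where your route diverges slightly from the paper's is in how the time-differentiated extension is set up. The paper introduces $\delta\Phi=\mathcal{H}_f^-(\pa_t\psi)-\pa_t\mathcal{H}_f^-(\psi)$, which is \emph{harmonic} in the moving domain with nonzero Dirichlet data $\mp\pa_tf\,\pa_y\Phi|_{\Gamma_f}$; this lets Proposition~\ref{prop:elliptic} be applied verbatim, and the flattening is then accounted for by the explicit decomposition $\pa_t\Psi-\Psi^t=(\Psi_t-\Psi^t)+\tfrac{\pa_t\rho_\delta}{\pa_z\rho_\delta}\pa_z\Psi$. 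You instead differentiate the flattened equation \eqref{eq:elliptic-flat} directly, which keeps zero boundary data but produces an inhomogeneous source $-(\pa_t\alpha)\Delta\Psi-(\pa_t\beta)\cdot\na\pa_z\Psi+(\pa_t\gamma)\pa_z\Psi$ --- Proposition~\ref{prop:elliptic} as stated treats only the homogeneous problem, so strictly speaking you'd need the source-term variant, which the forward/backward parabolic machinery in its proof (Lemma~\ref{lem:parabolic} with $g\in Y^\sigma$) does supply. Either formulation leads to the same multilinear bookkeeping and the same conclusion, and your ``freeze $\psi$'' reduction is a correct way of saying that the commutator isolates the $f$-dependence of the operator --- the paper's $\Psi^t$/$\Psi_t$ pair is just the explicit version of that reduction when $\psi$ is allowed to move.

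Two small cautions worth recording. First, your intermediate claim that the elliptic estimate yields $\|\na_{x,z}\pa_t\Psi(\cdot,0)\|_{H^{\sigma-1}}\lesssim\|\psi\|_{H^\sigma}$ is slightly stronger than what Lemma~\ref{lem:ellip-td} actually delivers ($\|\na_{x,z}\delta\Psi\|_{X^{\sigma-2}(I)}\lesssim\|\psi\|_{H^\sigma}$): the loss is recouped because the paradifferential operators in \eqref{eq:DN-R} acting on $\pa_t\Psi-\Psi^t$ are of strictly negative relative order, so you don't need trace control in $H^{\sigma-1}$ --- be careful not to over-promise there. Second, your bound on $M^1_\e(\pa_t\lambda)$ should carry $\|f\|_{H^{s+\f12}}$ (or at least $\|f\|_{H^{s-\f12}}$, as the paper has) multiplicatively together with $\|\pa_tf\|_{H^{s-\f12}}$, since $\pa_t\lambda$ is a smooth function of $\na f$ times $\na\pa_tf$ and the $\xi$-dependent part of the symbol still involves $\na f$. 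Neither of these affects the validity of the argument, only the precision of the intermediate statements.
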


\begin{proof}
We first get by (\ref{eq:DN-para-p}) that
\beno
[\pa_t,\mathcal{N}^-_f]\psi=T_{\pa_t\lambda}\psi+\pa_tR_f^-\psi-
R_f^-\pa_t\psi.
\eeno
It follows from Proposition \ref{prop:symcal} that
\beno
\|T_{\pa_t\lambda}\psi\|_{H^{\sigma-1}}\le C\big(c_0, \|f\|_{H^{s-\f12}}, \|\pa_tf\|_{H^{s-\f12}}\big)\|\psi\|_{H^{\sigma}}.
\eeno

We denote
\beno
\Psi^t(t,x,z)=\mathcal{H}_f^-\big(\pa_t\psi)(t,x,\rho_\delta\big),\quad \Psi_t(t,x,z)=(\pa_t\mathcal{H}_f^-\big(\psi))(t,x,\rho_\delta\big).
\eeno
Thanks to (\ref{eq:DN-R}), we find that
\begin{align*}
[\pa_t,R_f^-]\psi=&(T_{\pa_t\zeta_1}T_A-T_{\pa_t\zeta_1A})\Psi\big|_{z=0}
+(T_{\zeta_1}T_{\pa_tA}-T_{\zeta_1\pa_tA})\Psi|_{z=0}\\
&+(T_{\zeta_1}T_A-T_{\zeta_1A})(\pa_t\Psi-\Psi^t)\big|_{z=0},
\end{align*}
where we have
\beno
\pa_t\Psi-\Psi^t=\Psi_t-\Psi^t+\f {\pa_t\rho_\delta} {\pa_z\rho_\delta}
\pa_z\Psi.
\eeno
Then we infer from Proposition \ref{prop:symcal} and Lemma \ref{lem:ellip-td} that
\beno
\big\|[\pa_t,R_{1,f}^-]\psi\big\|_{H^{\sigma-1}}\le C\big(c_0,\|f\|_{H^{s+\f12}},\|\pa_tf\|_{H^{s-\f12}}\big)\|\psi\|_{H^{\sigma}}.
\eeno

By (\ref{eq:DN-R}) again, we have
\begin{align*}
[\pa_t,R_{2,f}^-]\psi=&T_{\pa_t\zeta_1}(\pa_z-T_A)\Psi|_{z=0}-T_{\zeta_1}T_{\pa_t A}\Psi|_{z=0}\\
&+T_{\zeta_1}(\pa_z-T_A)\big(\pa_t\Psi-\Psi^t\big)|_{z=0}.
\end{align*}
Then by Lemma \ref{lem:ellip-td} and the proof of (\ref{eq:W-est}), we get
\beno
\big\|[\pa_t,R_{2,f}^-]\psi\big\|_{H^{\sigma-1}}\le C\big(c_0,\|f\|_{H^{s+\f12}},\|\pa_tf\|_{H^{s-\f12}}\big)\|\psi\|_{H^{\sigma}}.
\eeno

The estimate of $[\pa_t,R_{3,f}^-]\psi$ can be deduced from Lemma \ref{lem:ellip-td} and Lemma \ref{lem:bony}.
\end{proof}

\begin{lemma}\label{lem:ellip-td}
With the same assumptions as in Proposition \ref{prop:DN-com-time},
let $\delta\Phi(t,x,y)=\mathcal{H}_f^-\big(\pa_t\psi)-\pa_t\mathcal{H}_f^-\big(\psi)$ and $\delta\Psi=\delta\Phi(t,x,\rho_\delta)$.
Then it holds that
\beno
\|\nabla_{x,z}\delta\Psi\|_{X^{\sigma-2}(I)}
\le C\big(c_0,\|f\|_{H^{s+\f12}},\|\pa_tf\|_{H^{s-\f12}}\big)\|\psi\|_{H^{\sigma}}.
\eeno

\end{lemma}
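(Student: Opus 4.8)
\textbf{Proof proposal for Lemma \ref{lem:ellip-td}.}
The quantity $\delta\Phi=\mathcal{H}_f^-(\pa_t\psi)-\pa_t\mathcal{H}_f^-(\psi)$ is harmonic in $\Om_f^-$ (since both terms are, the first by definition and the second because $\pa_t$ commutes with the fixed-domain Laplacian $\Delta$), but its boundary trace on $\Gamma_f$ is \emph{not} zero: the first term equals $\pa_t\psi$ on $\Gamma_f$, while the trace of the second picks up an extra term from differentiating the boundary parametrization. So the plan is to first identify the boundary value of $\delta\Phi$, then run the elliptic estimate in the strip from the appendix (Proposition \ref{prop:elliptic}) for this inhomogeneous boundary-value problem, and finally use the DN/harmonic-extension bounds already established (Lemma \ref{lem:DN}, Proposition \ref{prop:DN-Hs}) together with the product estimates of Lemma \ref{lem:basic} to close.

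In detail: I would pass to the flattened strip via $z\mapsto\rho_\delta$ and write $\delta\Psi=\delta\Phi(t,x,\rho_\delta)$. On $z=0$ one computes $\delta\Phi|_{\Gamma_f}=\pa_t\psi-\big(\pa_t\underline{\mathcal{H}_f^-\psi}\big)$, and since $\underline{\mathcal{H}_f^-\psi}=\psi$ the difference comes entirely from the chain-rule term: $\delta\Phi|_{z=0}=-\pa_z(\mathcal{H}_f^-\psi)|_{z=0}\,\pa_tf$ up to the Jacobian factor, i.e. a product of $\pa_tf\in H^{s-\f12}$ with $\nabla_{x,z}\Psi(x,0)$, which by Proposition \ref{prop:elliptic} (the trace estimate for the harmonic extension) lies in $H^{\sigma-1}$ with norm $\le K_{s+\f12,f}\|\psi\|_{H^\sigma}$. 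Multiplying, and using that $H^{s-\f12}$ is an algebra acting on $H^{\sigma-1}$ for $\sigma-1\le s-\f12$ and $s-\f12>2$, gives $\|\delta\Phi|_{z=0}\|_{H^{\sigma-1}}\le C(c_0,\|f\|_{H^{s+\f12}},\|\pa_tf\|_{H^{s-\f12}})\|\psi\|_{H^\sigma}$. In the flattened variables $\delta\Psi$ solves an equation $\nabla_{x,z}\cdot(P\nabla_{x,z}\delta\Psi)=\nabla_{x,z}\cdot g$ where $P$ is the (time-independent) metric coefficient matrix and $g$ collects the commutator terms between $\pa_t$ and the change of variables acting on $\Psi$ — schematically $g\sim (\pa_tP)\nabla_{x,z}\Psi$, again a product of something controlled by $\|\pa_tf\|_{H^{s-\f12}}$ with $\nabla_{x,z}\Psi\in X^{\sigma-1}$. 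Then Proposition \ref{prop:elliptic} applied with this right-hand side $g$ and the boundary datum computed above yields the claimed bound on $\|\nabla_{x,z}\delta\Psi\|_{X^{\sigma-2}(I)}$.

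The main obstacle I anticipate is bookkeeping the regularity budget precisely: the loss of two derivatives in the target space $X^{\sigma-2}$ (versus $X^{\sigma-1}$ for $\nabla_{x,z}\Psi$ itself) is exactly what one should expect, since $\pa_t$ of the coefficients costs a derivative of $f$ and the source term is in divergence form, but one must check that every product estimate invoked stays within the allowed range $\sigma-2\ge -\f12$ (equivalently $\sigma\ge\f32$, which is the hypothesis) and that the coefficient $\pa_tP$ genuinely lies in $H^{s-\f12}$, which follows from $\pa_t\rho_\delta$ being as smooth as $\pa_tf$ after the smoothing operator in the definition of $\rho_\delta$. A secondary technical point is the low-frequency/zero-average issue: $\pa_t\psi$ need not have zero average even if $\psi$ does, so the coercive estimate from Lemma \ref{lem:DN} must be supplemented by handling the mean separately, but this is harmless since the mean of a harmonic function with Neumann data on the top is constant and contributes nothing to $\nabla_{x,z}\delta\Psi$.
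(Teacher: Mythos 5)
Your approach is essentially the same as the paper's: observe that $\delta\Phi$ is harmonic in $\Omega_f^-$, identify its Dirichlet trace on $\Gamma_f$ (which you do correctly, $\delta\Phi|_{\Gamma_f}=\pa_tf\cdot\pa_y\Phi|_{\Gamma_f}=\f{\pa_tf}{\pa_z\rho_\delta|_{z=0}}\pa_z\Psi|_{z=0}$, up to sign), estimate this trace in $H^{\sigma-1}$ by the product rule and the trace bound from Proposition \ref{prop:elliptic}, and then apply Proposition \ref{prop:elliptic} once more to $\delta\Psi$. That is exactly what the paper does.

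There is, however, a conceptual slip in your middle paragraph that you should correct. Because $\delta\Phi$ is harmonic in the \emph{physical} domain $\Omega_f^-$ (both terms are harmonic: the first by definition, the second because $\Delta_{x,y}$ is $t$-independent), its pullback $\delta\Psi=\delta\Phi(t,x,\rho_\delta)$ satisfies the \emph{same homogeneous} flattened elliptic equation (\ref{eq:elliptic-flat}) as $\Psi$, with zero right-hand side. There is no commutator source term $g\sim(\pa_tP)\nabla_{x,z}\Psi$. The $(\pa_tP)$ terms you have in mind would arise if you were deriving an equation for $\pa_t\Psi$ by differentiating the flattened equation in $t$ — but that is not what $\delta\Psi$ is: $\delta\Psi$ is $\Psi^t-\Psi_t$, the pullback of the difference $\mathcal{H}_f^-(\pa_t\psi)-\pa_t\Phi$, not $\pa_t\Psi$. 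Also, the coefficient matrix $P$ (i.e., $\alpha,\beta,\gamma$) is \emph{not} time-independent, since it depends on $\rho_\delta$ and hence on $f(t,\cdot)$; this is harmless because the elliptic estimate is applied at each fixed $t$, but the parenthetical ``(time-independent)'' is false. With the source term removed, the argument is the one the paper gives: a single application of Proposition \ref{prop:elliptic} with the boundary datum you already computed yields the bound directly, with no need to invoke any right-hand-side version of the estimate.
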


\begin{proof}
It is easy to see that $\delta\Phi$ satisfies
\beno
\left\{\begin{array}{l}
\Delta_{x,y}\delta\Phi=0\quad \text{in}\quad\Omega_f^-,\\
\delta\Phi(t,x,f(t,x))=-\pa_tf\pa_y\Phi(t,x,f(t,x)) \quad \text{for}\quad x\in\mathbb{T}^2,\\
\partial_y\delta\Phi(t,x,-1)=0\quad \text{for}\quad x\in\mathbb{T}^2.
\end{array}\right.
\eeno
It follows from Proposition \ref{prop:elliptic} that
\begin{align*}
\|\nabla_{x,z}\delta\Psi\|_{X^{\sigma-2}(I)}\leq& K_{s+\f12,f}\big\|\f {\pa_tf} {\pa_z\rho_\delta|_{z=0}}(\pa_z\Psi)(t,x,f(t,x))\big\|_{H^{\sigma-1}}\\
\le& C\big(c_0,\|f\|_{H^{s+\f12}},\|\pa_tf\|_{H^{s-\f12}}\big)\|\psi\|_{H^{\sigma}}.
\end{align*}
The proof is finished.\
\end{proof}

\section{Div-curl system}

In this section, we solve the following div-curl system
\beq\label{eq:div-curl}
\left\{\begin{split}
&\curl \vu =\vom,\quad\div \vu=g\quad \text{ in }\quad\Om_f^+,\\
&\vu\cdot\vN_f =\vartheta\quad \text{ on}\quad \Gamma_f,   \\
&\vu\cdot\ve_3 = 0,\quad \int_{\BT} u_i dx'=\alpha_i (i=1,2)\quad \text{ on}\quad\Gamma^{+}.
\end{split}\right.
\eeq

In this section, we assume that $f\in H^{s+\f12}(\T^2)$ for $s\ge 2$ and satisfies (\ref{ass:f}).
Our main result  is stated as follows.

\begin{proposition}\label{prop:div-curl}
Let $\sigma \in [2,s]$ be an integer. Given $\vom, g\in H^{\sigma-1}(\Omega_f^+)$, $\vartheta\in H^{\sigma-\frac12}(\Gamma_f)$ with the compatiblity condition:
\begin{align*}
  \int_{\Om_f^+} g dx=\int_{\Gamma_f} \vartheta ds,
\end{align*}
and $\vom$ satisfies
\begin{align*}
&\div\vom=0\quad \text{in}\quad \Omega_f^+,\quad\int_{\Gamma^+}\om_3dx'=0,
\end{align*}
Then there exists a unique $\vu\in H^{\sigma}(\Omp)$ of the div-curl system (\ref{eq:div-curl}) so that
\begin{align*}
\|\vu\|_{H^{\sigma}(\Omega_f^+)}\le C\big(c_0,\|f\|_{H^{s+\f12}}\big)\left(\|\vom\|_{H^{\sigma-1}(\Omega_f^+)}+\|g\|_{H^{\sigma-1}(\Omega_f^+)}
+\|\vartheta\|_{H^{\sigma-\frac12}(\Gamma_f)}+|\alpha_1|+|\alpha_2|\right).
\end{align*}
\end{proposition}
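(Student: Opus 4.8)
The plan is to reduce the div-curl system to a pair of scalar elliptic problems by the Hodge-type decomposition $\vu = \nabla\phi + \curl\vA + \vu_h$, where $\phi$ carries the divergence, $\curl\vA$ carries the vorticity, and $\vu_h$ is a finite-dimensional harmonic correction fixing the average of the tangential trace on $\Gamma^+$. First I would dispose of $\phi$: solve the Neumann problem $\Delta\phi = g$ in $\Om_f^+$ with $\vN_f\cdot\nabla\phi = \vartheta$ on $\Gamma_f$ and $\pa_3\phi = 0$ on $\Gamma^+$. The compatibility condition $\int_{\Om_f^+} g\,dx = \int_{\Gamma_f}\vartheta\,ds$ (note $\pa_3\phi=0$ on $\Gamma^+$ contributes no flux there) is exactly the solvability condition, and standard elliptic theory in the Lipschitz-but-$H^{s+1/2}$ domain $\Om_f^+$ — made rigorous by pulling back to $\Om_*^+$ via the harmonic map $\Phi_f^+$ of Section 2 and using Lemma 2.1 together with the elliptic estimate of the appendix — gives $\phi\in H^{\sigma+1}(\Om_f^+)$ with $\|\nabla\phi\|_{H^\sigma}\le C(c_0,\|f\|_{H^{s+1/2}})(\|g\|_{H^{\sigma-1}}+\|\vartheta\|_{H^{\sigma-1/2}})$.

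Next, replacing $\vu$ by $\vu - \nabla\phi$, I may assume $g=0$ and $\vartheta$ is replaced by $\vartheta - \vN_f\cdot\nabla\phi|_{\Gamma_f}$; so it suffices to solve the system with $\div\vu = 0$. For the vorticity part I would seek $\vu = \curl\vA$ with $\div\vA = 0$; then $\curl\vu = \vom$ becomes $-\Delta\vA = \vom$ (using $\div\vom=0$, which is the compatibility needed for the vector potential to exist and for $\div\curl\vA=0$ to be consistent), supplemented with boundary conditions chosen so that $\curl\vA$ is tangential where required: on $\Gamma^+$ one takes $A_1=A_2=0$ and $\pa_3 A_3=0$ (forcing $(\curl\vA)_3 = \pa_1 A_2 - \pa_2 A_1 = 0$ there), and on $\Gamma_f$ one imposes the conditions that make $\vN_f\cdot\curl\vA$ vanish and that $\vA$ itself be well-defined; the condition $\int_{\Gamma^+}\om_3\,dx'=0$ is precisely what makes this boundary-value problem for $\vA$ solvable. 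Solving component-by-component (again by transport to $\Om_*^+$) yields $\vA\in H^{\sigma+1}$, hence $\curl\vA\in H^\sigma$ with the desired bound in terms of $\|\vom\|_{H^{\sigma-1}}$.

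It remains to absorb the residual normal-trace data and the prescribed averages $\alpha_1,\alpha_2$. After subtracting $\nabla\phi+\curl\vA$, the remaining field $\vw$ satisfies $\curl\vw=0$, $\div\vw=0$, $\vw\cdot\ve_3=0$ on $\Gamma^+$, with some given $\vw\cdot\vN_f=:\wt\vartheta\in H^{\sigma-1/2}(\Gamma_f)$ (whose total flux is zero by construction) and prescribed $\int_{\BT} w_i\,dx' = \wt\alpha_i$. Writing $\vw=\nabla\psi$ with $\psi$ harmonic, $\pa_3\psi=0$ on $\Gamma^+$, $\vN_f\cdot\nabla\psi=\wt\vartheta$ on $\Gamma_f$, the solution is unique up to an additive constant in $\psi$, i.e. $\vw=\nabla\psi$ is uniquely determined; this is the Dirichlet–Neumann / harmonic-extension machinery of Section 3, so $\|\nabla\psi\|_{H^\sigma}\le K_{s+1/2,f}\|\wt\vartheta\|_{H^{\sigma-1/2}}$. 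Finally, since $\psi$ is harmonic with $\pa_3\psi=0$ on $\Gamma^+$, its boundary trace there has a two-parameter family of affine ambiguities $\psi \mapsto \psi + c_1 x_1 + c_2 x_2$ (these are the only harmonic functions with zero normal derivative on both $\Gamma^+$ that are linear), which adjusts $\int_{\BT} w_i\,dx'$ by $(c_1,c_2)$; choosing $c_i$ to match $\wt\alpha_i$ fixes $\vw$ completely and contributes $|\alpha_1|+|\alpha_2|$ to the estimate. Uniqueness of $\vu$ follows because the difference of two solutions is curl-free, divergence-free, has zero normal trace on all of $\pa\Om_f^+$ and zero tangential average on $\Gamma^+$, hence is zero by the energy identity $\int |\nabla\psi|^2 = 0$ after removing the affine ambiguity.

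The main obstacle I anticipate is not the decomposition itself but the elliptic regularity bookkeeping: every scalar problem lives on the rough domain $\Om_f^+$ with boundary only in $H^{s+1/2}$, so one cannot quote flat-domain elliptic estimates directly. The technical heart is to transport each problem to the fixed domain $\Om_*^+$ through the harmonic coordinate $\Phi_f^+$, check that the transformed (variable-coefficient, divergence-form) operator has coefficients controlled in $H^{s-1/2}$ by $\|f\|_{H^{s+1/2}}$, and then invoke the strip elliptic estimate of the appendix to get the sharp gain of one derivative with a constant of the form $C(c_0,\|f\|_{H^{s+1/2}})$; one must also be careful that the Hodge corrections $\curl\vA$ and $\nabla\phi$ land exactly in $H^\sigma(\Om_f^+)$ and not merely in a transported space, which is guaranteed by Lemma 2.1. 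The compatibility conditions in the hypothesis are exactly matched to the Fredholm obstructions of the three scalar problems, so no extra conditions are needed.
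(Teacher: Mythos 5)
Your architecture is close to the paper's: a scalar Neumann problem carrying $\div\vu=g$ and the normal trace (your $\phi$, the paper's Lemma~\ref{lem:div-curl-1}), a separate ``pure curl'' piece carrying $\vom$, a two-dimensional correction pinning down $\int_{\BT}u_i\,dx'=\alpha_i$, and uniqueness via the observation that a curl-free, div-free field with zero normal trace and zero averages is $\nabla\psi$ with $\psi$ periodic and harmonic with Neumann data, hence constant — this is precisely Lemma~\ref{lem:ellip-unique}, and you reproduce that argument correctly. The place where you and the paper diverge is the construction of the curl-carrying part. You propose a vector potential $\vA$ with $-\Delta\vA=\vom$, gauge $\div\vA=0$, and mixed boundary conditions. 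This step is where a real gap sits: on the curved boundary $\Gamma_f$ the conditions ensuring $\vN_f\cdot\curl\vA=0$ and well-posedness of the vector Laplacian (e.g.\ $\vA\times\vn_f=0$ plus $\div\vA=0$ on $\Gamma_f$) couple the three components of $\vA$, so ``solving component-by-component'' does not apply; one must also verify that the Coulomb gauge is preserved by these boundary conditions, and that the Fredholm obstruction is exactly $\int_{\Gamma^+}\om_3\,dx'=0$ and nothing more — none of this is argued. The paper (Lemma~\ref{lem:div-curl-2}, following Cheng--Shkoller~\cite{CS1}) avoids the vector Laplacian on the curved domain entirely: it extends $\vom$ by a gradient to the full flat strip $\T^2\times[-1,1]$, solves the resulting div-curl system explicitly there (where it reduces to an ODE in $x_3$), then subtracts a gradient in $\Om_f^+$ to restore $\vu\cdot\vN_f=0$. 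That is a genuinely simpler route that keeps every elliptic solve scalar.

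A second, smaller, imprecision: you describe the finite-dimensional correction as an ``affine ambiguity'' $\psi\mapsto\psi+c_1x_1+c_2x_2$. Those functions are multi-valued on $\T^2$, and adding them changes $\vN_f\cdot\nabla\psi$ on $\Gamma_f$ by $-c_i\partial_if$, so they are not an ambiguity of the $\psi$-problem you posed. What one actually adds is the constant field $(c_1,c_2,0)$, and then the Neumann datum for the single-valued potential has to be shifted by $\pm c_i\partial_if$ to compensate; the map $(c_1,c_2)\mapsto\int_{\BT}w_i\,dx'$ is then the identity because the compensating periodic potential has zero average tangential trace on $\Gamma^+$. This is exactly the device the paper uses in the final line of its proof ($\vu=\vu_1+\nabla\phi+(\alpha_1,\alpha_2,0)$ with the Neumann datum for $\phi$ adjusted by $\alpha_i\partial_if$). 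With those two points repaired your plan would be a correct alternative proof, but as written the vector-potential step is the missing idea.
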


\begin{remark}
For the compatiblility conditions on $\om$, we have the following geometric interpretation:

Let $\widetilde\omega=\om_1dx_2\wedge dx_3+\om_2dx_3\wedge dx_1+\om_3dx_1\wedge dx_2$. Then $\widetilde\omega$
is a closed 2-form on $\Omega_f^+$. Since $H^2_{dR}(\Omega_f^+)\simeq H^2_{dR}(S^1\times S^1\times [0,1])=\mathbb{R}$, all closed 2-form which
is not exact must be $cdx_1\wedge dx_2+ d\sigma$ for some $c\in\mathbb{R}$ and 1-form $\sigma$.
Thus, if $\int_{\Gamma^+}\om_3dx_1dx_2=0,$ then $\widetilde\omega$ is exact and $\vom$ must be a curl of
some vector field $\vu$.

\end{remark}

The proof of the proposition is based on the following lemmas.

\begin{lemma}\label{lem:div-curl-1}
Let $\sigma \in [1,s]$ be an integer. Given $g\in H^{\sigma-1}(\Omega_f^+), \vartheta\in H^{\sigma-\f12}(\Gamma_f),\nu\in H^{\sigma-\f12}(\Gamma^+)$  with the following compatibility condition
\begin{align*}
\int_{\Omega_f^+}gdx=\int_{\Gamma_f}\vartheta ds+\int_\BT\nu dx',
\end{align*}
there exists a unique periodic(in $x'$) solution $\phi\in H^{\sigma+1}(\Omega_f^+)$ up to a constant of the elliptic equation
\beno\left\{\begin{split}
&\Delta\phi =g\quad \text{ in}\quad\Om_f^+,\\
&\vN_f\cdot\nabla\phi=\vartheta\quad \text{on}\quad \Gamma_f,\\
&\partial_3\phi=\nu\quad \text{ on}\quad\Gamma^{+}.
\end{split}\right.\eeno
Moreover, we have
\begin{align*}
\|\nabla\phi\|_{H^{\sigma}(\Omega_f^+)}\le C\big(c_0,\|f\|_{H^{s+\f12}}\big)\left(\|g\|_{H^{\sigma-1}(\Omega_f^+)}
+\|\vartheta\|_{H^{\sigma-\frac12}(\Gamma_f)}+\|\nu\|_{H^{\sigma-\frac12}(\Gamma^+)}\right).
\end{align*}
\end{lemma}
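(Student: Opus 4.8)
\textbf{Proof proposal for Lemma \ref{lem:div-curl-1}.}

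The plan is to reduce the mixed boundary value problem to a standard variational problem on $\Omega_f^+$ via a Lax--Milgram argument, and then to obtain the Sobolev estimate by a combination of a flattening change of variables (the harmonic coordinate $\Phi_f^+$ of Section~2) and classical elliptic regularity up to the boundary on the fixed reference domain $\Omega_*^+$.

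First I would set up the weak formulation. Since the equation $\Delta\phi=g$ is equipped with a Neumann-type condition $\vN_f\cdot\nabla\phi=\vartheta$ on $\Gamma_f$ and $\partial_3\phi=\nu$ on $\Gamma^+$, and the domain has no other boundary (periodicity in $x'$), the natural space is $\dot H^1(\Omega_f^+)=H^1(\Omega_f^+)/\mathbb{R}$, i.e. $H^1$ functions with zero mean. On this space $\int_{\Omega_f^+}\nabla\phi\cdot\nabla\psi\,\dx$ is coercive by the Poincar\'e inequality (the geometry is a perturbed strip, so the Poincar\'e constant is controlled by $c_0$ and $\|f\|_{H^{s+\f12}}$). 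The linear functional is $\psi\mapsto -\int_{\Omega_f^+}g\psi\,\dx+\int_{\Gamma_f}\vartheta\psi\,ds+\int_{\BT}\nu\psi\,dx'$, and the compatibility condition $\int_{\Omega_f^+}g\,\dx=\int_{\Gamma_f}\vartheta\,ds+\int_{\BT}\nu\,dx'$ is exactly what makes this functional vanish on constants, hence well-defined on the quotient space. Lax--Milgram then yields a unique weak solution $\phi\in\dot H^1$, with $\|\nabla\phi\|_{L^2}$ bounded by the data in the dual norms; integration against test functions supported near $\Gamma_f$ and near $\Gamma^+$ recovers the boundary conditions in the trace sense, and uniqueness up to a constant is immediate from coercivity.

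Next comes the regularity estimate, which is the main point. I would pull back the equation by $\Phi_f^+$: setting $\widetilde\phi=\phi\circ\Phi_f^+$ on $\Omega_*^+$, the Laplacian transforms into a second-order elliptic operator $\mathrm{div}_y(A(y)\nabla_y\widetilde\phi)$ with coefficient matrix $A$ built from $\nabla\Phi_f^+$; since $\Phi_f^+$ is harmonic and $f\in H^{s+\f12}$, one has $\nabla\Phi_f^+\in H^{s-\f12}(\Omega_*^+)$ with lower bound on the Jacobian (by the bijectivity statement and standard elliptic estimates, this is where $C(c_0,\|f\|_{H^{s+\f12}})$ enters), so $A$ is uniformly elliptic with $H^{s-\f12}$ coefficients. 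Because $s\ge\sigma$ and $\sigma\ge 1$ is an integer, the coefficients lie in a high enough Sobolev space to be a multiplier on $H^{\sigma-1}$, and the transformed boundary operators ($\vN_f\cdot\nabla$ becoming a conormal derivative on $\Gamma_*$, $\partial_3$ on $\Gamma^+$) are covariant of the right regularity. Classical elliptic regularity for the conormal/Neumann problem on the fixed smooth domain $\Omega_*^+$ — proved by the standard induction on $\sigma$ via difference quotients in the tangential directions and then reading off the normal derivative from the equation — gives
\[
\|\nabla_y\widetilde\phi\|_{H^\sigma(\Omega_*^+)}\le C\big(c_0,\|f\|_{H^{s+\f12}}\big)\Big(\|g\circ\Phi_f^+\|_{H^{\sigma-1}}+\|\vartheta\|_{H^{\sigma-\f12}(\Gamma_*)}+\|\nu\|_{H^{\sigma-\f12}(\BT)}+\|\nabla_y\widetilde\phi\|_{L^2}\Big),
\]
where the last term absorbs the quotient ambiguity. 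Finally I would transport back to $\Omega_f^+$ using Lemma~\ref{lem:basic} (parts 1 and 2 to compare Sobolev norms in the two coordinate systems, and part 3 for the products with the coefficients of $A$), convert $\|g\circ\Phi_f^+\|_{H^{\sigma-1}}\le C\|g\|_{H^{\sigma-1}(\Omega_f^+)}$, and drop the $L^2$ term by absorbing it into the left side after first bounding it by the Lax--Milgram estimate.

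The main obstacle I anticipate is bookkeeping the dependence of all constants on $\|f\|_{H^{s+\f12}}$ rather than on, say, $\|f\|_{C^\infty}$: one must check that the harmonic map $\Phi_f^+$ and its inverse have the claimed $H^{s-\f12}$ regularity with the Jacobian bounded below quantitatively in terms of $c_0$ and $\|f\|_{H^{s+\f12}}$ only (this is the elliptic-in-a-strip estimate of the appendix), and that the tangential difference-quotient induction never costs more than $\sigma-1$ derivatives on the coefficients — which is exactly why the hypothesis is $f\in H^{s+\f12}$ with $\sigma\le s$ rather than $\sigma\le s+\f12$. Everything else (Lax--Milgram, the product estimate, the coordinate change) is routine given the lemmas already established.
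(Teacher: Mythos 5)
Your proposal is correct and follows exactly the route the paper indicates: the paper's own ``proof'' is a one-line reference to Lax--Milgram plus standard elliptic regularity theory (citing Cheng--Shkoller and Lannes), and your argument simply fills in the details of that scheme. The weak formulation on $\dot H^1$ with the compatibility condition making the functional well-defined, followed by pullback via the harmonic coordinate to the fixed domain and classical difference-quotient regularity, is the standard implementation being referenced, and your bookkeeping of the constants via $\|f\|_{H^{s+\f12}}$ and Lemma~\ref{lem:basic} is consistent with the rest of the paper.
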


\begin{proof}
The proof is standard by using Lax-Milgram theorem and regularity theory for elliptic equation(see \cite{CS1, Lan}, for example).
\end{proof}

\begin{lemma}\label{lem:ellip-unique}
Given $\tilde\theta\in H^\f12(\Gamma_f)$, there exists a unique solution  $\vv\in H^1(\Om_f^+)$ to the system
\beno\left\{\begin{split}
&\curl \vv =0,\quad \div \vv=0\quad\text{ in}\quad\Om_f^+,\\
&\vv\cdot\vN_f = \tilde\theta\quad \text{ on}\quad\Gamma_f,   \\
&\vv\cdot\ve_3 = 0,\quad\int_{\BT} v_i dx'=0 (i=1,2)\quad \text{ on}\quad\Gamma^{+}.
\end{split}\right.
\eeno
\end{lemma}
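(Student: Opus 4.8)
**Proof proposal for Lemma 4.3 (uniqueness and existence for the curl-free, divergence-free system with prescribed normal trace).**

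The plan is to reduce the system to a scalar elliptic boundary value problem via a potential. Since $\curl\vv=0$ in $\Omega_f^+$ and $\Omega_f^+$ is simply connected (it is diffeomorphic to $\mathbb{T}^2\times(0,1)$, whose first de Rham cohomology is $H^1_{dR}(S^1\times S^1\times[0,1])\simeq\mathbb{R}^2$, so a priori closed 1-forms need not be exact), I first need to rule out the two nontrivial cohomology classes. This is exactly where the condition $\int_{\BT}v_i\,dx'=0$ for $i=1,2$ on $\Gamma^+$ enters: the harmonic 1-forms generating $H^1_{dR}$ can be represented by $dx_1$ and $dx_2$, and pairing $\vv$ (viewed as a 1-form) against the corresponding cycles $\{x_2=\mathrm{const},\,x_3=1\}$ and $\{x_1=\mathrm{const},\,x_3=1\}$ yields precisely $\int_{\BT}v_1\,dx'$ and $\int_{\BT}v_2\,dx'$ up to constants (using $v_3=0$ on $\Gamma^+$ to push the cycle to the top boundary and Stokes' theorem with $\curl\vv=0$). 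Hence these two conditions force $\vv$ to be an exact 1-form, i.e. $\vv=\nabla\phi$ for a single-valued periodic $\phi\in H^2(\Omega_f^+)$, unique up to an additive constant.

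Next I substitute $\vv=\nabla\phi$ into the remaining equations. Then $\div\vv=0$ becomes $\Delta\phi=0$ in $\Omega_f^+$; the condition $\vv\cdot\vN_f=\tilde\theta$ on $\Gamma_f$ becomes the Neumann condition $\vN_f\cdot\nabla\phi=\tilde\theta$; and $\vv\cdot\ve_3=0$ on $\Gamma^+$ becomes $\partial_3\phi=0$ on $\Gamma^+$. This is exactly the elliptic problem of Lemma \ref{lem:div-curl-1} with $g=0$, $\vartheta=\tilde\theta$, $\nu=0$, so I must check the solvability (compatibility) condition $\int_{\Omega_f^+}g\,dx=\int_{\Gamma_f}\tilde\theta\,ds+\int_{\BT}\nu\,dx'$, i.e. $\int_{\Gamma_f}\tilde\theta\,ds=0$. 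At first sight this is a constraint not present in the statement of the lemma; I would resolve this by noting that for the application in the paper $\tilde\theta$ will always be a time derivative of a mean-zero graph height (or, more directly, that the divergence theorem applied to any admissible $\vv$ forces $\int_{\Gamma_f}\vv\cdot\vN_f\,ds=\int_{\Gamma^+}\vv\cdot\ve_3\,dx'=0$), so either the hypothesis implicitly includes $\int_{\Gamma_f}\tilde\theta\,ds=0$ or it should be added; with that, Lemma \ref{lem:div-curl-1} gives a unique $\phi$ up to a constant, hence a unique $\vv\in H^1(\Omega_f^+)$, together with the quantitative bound $\|\vv\|_{H^1(\Omega_f^+)}=\|\nabla\phi\|_{H^1(\Omega_f^+)}\le C(c_0,\|f\|_{H^{s+\f12}})\|\tilde\theta\|_{H^\f12(\Gamma_f)}$.

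For uniqueness alone (which is all the statement literally asserts), I would argue directly: if $\vv$ solves the homogeneous system ($\tilde\theta=0$), then by the cohomological argument above $\vv=\nabla\phi$ with $\phi$ periodic, $\Delta\phi=0$, $\vN_f\cdot\nabla\phi=0$ on $\Gamma_f$, $\partial_3\phi=0$ on $\Gamma^+$; multiplying by $\phi$ and integrating by parts (the boundary terms vanish) gives $\int_{\Omega_f^+}|\nabla\phi|^2\,dx=0$, so $\vv=\nabla\phi=0$. The main obstacle is the cohomological step: one must verify carefully that the two scalar averages on $\Gamma^+$ are the right functionals to kill $H^1_{dR}(\Omega_f^+)$, using $v_3|_{\Gamma^+}=0$ and $\curl\vv=0$ to deform the integration cycles from an interior torus cross-section to the flat top boundary where the computation is explicit. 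Everything after that reduction is a direct appeal to Lemma \ref{lem:div-curl-1} and standard Hilbert-space arguments.
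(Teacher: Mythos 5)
Your proof is correct and takes essentially the same route as the paper: reduce to a scalar potential $\phi$ with $\vv=\nabla\phi$, use the two average conditions on $\Gamma^+$ to rule out the nontrivial periods, and conclude from the resulting homogeneous Neumann problem. The paper does this more concretely: it writes $\phi$ on the universal cover, notes that $\zeta(x)=\phi(x_1+2\pi,x_2,x_3)-\phi(x_1,x_2,x_3)$ has $\nabla\zeta=0$ hence is constant, and then identifies $\zeta=\int_{\BT\times\{x_3=1\}}v_1\,dx'=0$, so $\phi$ is in fact periodic; your version phrases the same computation in the language of pairing against generators of $H^1_{dR}$, which is fine but adds nothing beyond the elementary argument. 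One small slip in your write-up: $\Omega_f^+$ is \emph{not} simply connected (as your own cohomology count $H^1_{dR}\simeq\mathbb{R}^2$ shows), so the opening sentence should say it is not simply connected, which is precisely why the period conditions are needed. Your remark about the implicit compatibility $\int_{\Gamma_f}\tilde\theta\,ds=0$ for existence is correct; the paper sidesteps this by treating the lemma as a uniqueness statement (``WLOG $\tilde\theta=0$''), which is how it is used in the proof of Proposition \ref{prop:div-curl}.
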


\begin{proof}
Without loss of generality, we assume $\tilde\theta=0$.
From $\curl\vv=0$, we know that there exists a scalar function $\phi$ such that $\vv=\nabla\phi$.
Let $\zeta(x)=\phi(x_1+2\pi,x_2,x_3)-\phi(x_1,x_2,x_3)$. Then $\nabla \zeta=0$, thus $\zeta(x)$ is a constant.
On the other hand, we have
\begin{align*}
0&=\int_{\BT\times\{x_3=1\}}v_1dx'=\int_{\BT\times\{x_3=1\}} \partial_1\phi dx'\\
&=\int_0^1\Big(\phi(2\pi,x_2,1)-\phi(0,x_2,1)\Big)dx'\\
&=\zeta.
\end{align*}
This means that $\phi$ is periodic in $x_1$. Similarly, $\phi$ is also periodic in $x_2$.  Thus, $\phi\in H^2(\Omega_f^+)$ and is harmonic in $\Omega_f^+$ with homogeneous Neumann boundary condition. This implies the uniqueness of the solution from Lemma \ref{lem:div-curl-1}.
\end{proof}

\begin{lemma}\label{lem:div-curl-2}
Let $\sigma \in [2,s]$ be an integer. If $\vom\in H^{\sigma-1}(\Om_f^+)$ satisfies
\begin{align*}
&\div\vom=0\quad \text{in}\quad \Omega_f^+,\quad \int_{\Gamma^+}\om_3dx'=0,
\end{align*}
then there exists $\vu\in H^{\sigma}(\Om_f^+)$ such that
\beq\label{eq:div-curl-H}
\left\{\begin{split}
&\curl\vu=\om,\quad\div\vu=0\quad \text{in}\quad\Om_f^+,\\
&\vu\cdot\vN_f=0\quad \text{on}\quad\Gamma_f,   \\
&\vu\cdot\ve_3=0\quad\text{on}\quad\Gamma^{+}.
\end{split}\right.
\eeq
Moreover, we have
\beno
\|\vu\|_{H^{\sigma}(\Om_f^+)}\le C\big(c_0,\|f\|_{H^{s+\f12}}\big)\|\vom\|_{H^{\sigma-1}(\Om_f^+)}.
\eeno

\begin{proof}
In the case when $\Om_f^+$ is flat, the system of (\ref{eq:div-curl-H}) can be explicitly solved by transforming the system into an ODE system.

In general case, we follow the extension argument from \cite{CS1}.
Let us give a sketch, see section 5.2.2 in \cite{CS1} for the details. Let $\overline{\om}$ be a divergence-free extension of $\om$ to $\Om_f=\Om_f^+\cup \Om_f^-$,
which is defined as follows
\beno
\overline{\vom}=\na \phi\quad \text{in}\quad \Om_f^-,
\eeno
where $\phi$ solves
\beno\left\{\begin{split}
&\Delta\phi=0\quad \text{ in}\quad\Om_f^-,\\
&\vN_f\cdot\nabla\phi=\vN_f\cdot\vom\quad \text{on}\quad \Gamma_f,\\
&\partial_3\phi=0\quad \text{ on}\quad\Gamma^{-}.
\end{split}\right.\eeno
Then we introduce $\vv$ by solving the following system in $\Om=\T^2\times [-1,1]$
\beno
\left\{\begin{split}
&\curl\vv=\overline{\vom},\quad\div\vv=0\quad \text{in}\quad\Om,\\
&\vv\cdot\ve_3=0\quad\text{on}\quad\Gamma^{\pm}.
\end{split}\right.
\eeno
Since $\vv$ does not satisfy our desired regularity and the boundary condition, we need to subtract the nonregular part from $\vv$.
For this, we introduce $p$ which solves
\beno
\left\{\begin{split}
&\Delta p=0\quad \text{ in}\quad\Om_f^+,\\
&\vN_f\cdot\nabla p=-\vN_f\cdot\vv\quad \text{on}\quad \Gamma_f,\\
&\partial_3p=0\quad\text{ on}\quad\Gamma^{+}.
\end{split}\right.
\eeno
Then $\vu=\vv+\na p$ is our desired solution.
 \end{proof}

\end{lemma}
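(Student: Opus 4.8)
**Proof proposal for Lemma 4.4 (the div-curl Hodge decomposition with prescribed normal data, $\curl\vu=\vom$, $\div\vu=0$, $\vu\cdot\vN_f=0$).**

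The plan is to follow the extension/correction strategy already sketched in the statement, but I will organize it so that each regularity bound is traced through. First I would reduce the case of a general graph domain $\Om_f^+$ to the flat strip $\Om=\T^2\times(-1,1)$, where the curl equation can be solved componentwise: writing $\vom=(\om_1,\om_2,\om_3)$ with $\div\vom=0$ and $\int_{\Gamma^+}\om_3\,\dx'=0$, one integrates in $x_3$ to produce a vector potential, using the zero-mean condition on $\om_3$ precisely to guarantee periodicity and solvability of the $x_3$-ODE, and then subtracts a gradient to enforce $\div\vv=0$ and $\vv\cdot\ve_3=0$ on $\Gamma^{\pm}$. This gives $\vv\in H^{\sigma}(\Om)$ with $\|\vv\|_{H^\sigma(\Om)}\le C\|\overline{\vom}\|_{H^{\sigma-1}(\Om)}$, where $\overline{\vom}$ is the divergence-free extension of $\vom$ to all of $\Om_f=\Om_f^+\cup\Om_f^-$ described in the statement: extend by $\na\phi$ on $\Om_f^-$ with $\phi$ the harmonic function whose Neumann data on $\Gamma_f$ is $\vN_f\cdot\vom$, which is admissible because $\int_{\Gamma_f}\vN_f\cdot\vom\,ds=\int_{\Om_f^+}\div\vom\,\dx=0$, and apply Lemma 4.1 to get $\|\overline{\vom}\|_{H^{\sigma-1}(\Om_f)}\le C(c_0,\|f\|_{H^{s+1/2}})\|\vom\|_{H^{\sigma-1}(\Om_f^+)}$.

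Next I would correct the boundary trace: set $\vu=\vv+\na p$ where $p$ solves the Neumann problem $\Delta p=0$ in $\Om_f^+$, $\vN_f\cdot\na p=-\vN_f\cdot\vv$ on $\Gamma_f$, $\pa_3 p=0$ on $\Gamma^+$. The compatibility condition for this Neumann problem holds because $\int_{\Gamma_f}\vN_f\cdot\vv\,ds=\int_{\Om_f^+}\div\vv\,\dx=0$ (using $\div\vv=0$ and $\vv\cdot\ve_3=0$ on $\Gamma^+$). Since $\Delta p=0$, adding $\na p$ preserves $\curl\vu=\overline{\vom}=\vom$ in $\Om_f^+$ and $\div\vu=0$, and it produces the required $\vu\cdot\vN_f=0$ on $\Gamma_f$ and $\vu\cdot\ve_3=0$ on $\Gamma^+$. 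Lemma 4.1 applied to $p$ gives $\|\na p\|_{H^\sigma(\Om_f^+)}\le C(c_0,\|f\|_{H^{s+1/2}})\|\vN_f\cdot\vv\|_{H^{\sigma-1/2}(\Gamma_f)}\le C\|\vv\|_{H^\sigma(\Om_f^+)}$ by the trace theorem (here $\vN_f$ depends on $\na f\in H^{s-1/2}$, so the product estimate requires $\sigma-\tfrac12\le s-\tfrac12$, i.e. $\sigma\le s$, which is our hypothesis). Chaining the three bounds yields the stated estimate. Uniqueness follows from Lemma 4.3: if $\vu_1,\vu_2$ are two solutions, $\vv=\vu_1-\vu_2$ is curl-free, divergence-free, with $\vv\cdot\vN_f=0$ on $\Gamma_f$, $\vv\cdot\ve_3=0$ and zero mean of $v_i$ on $\Gamma^+$ (the last being automatic since both solutions satisfy it), hence $\vv\equiv0$.

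The main obstacle I expect is not the construction itself but keeping the regularity sharp at the top index $\sigma=s$: every product with $\vN_f$ or with coefficients built from $\na f$ must be controlled in $H^{s-1/2}$, and the harmonic extensions defining $\overline{\vom}$ and $p$ live on the graph domain $\Om_f^{\pm}$, so one must invoke the elliptic estimate of Lemma 4.1 (which itself is stated with constant $C(c_0,\|f\|_{H^{s+1/2}})$) rather than naive flat-domain estimates; the flattening via $\Phi_f^{\pm}$ together with Lemma 2.1 is what makes this legitimate, and one has to check that the Jacobian factors introduced by $\Phi_f^{\pm}$ do not cost more than the allotted $H^{s-1/2}$ regularity. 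A secondary point is verifying that the vector potential $\vv$ on the flat strip is genuinely in $H^\sigma$ and not merely $H^{\sigma-1}$ plus a gradient — this is where the explicit ODE solution and the subtraction of the correcting gradient on the flat strip must be done carefully, exactly as in \cite{CS1}.
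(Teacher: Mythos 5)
Your argument reproduces the paper's proof exactly: extend $\vom$ to the full strip by $\nabla\phi$ in $\Om_f^-$ with $\phi$ solving the Neumann problem with data $\vN_f\cdot\vom$, solve the flat-strip div--curl system for $\vv$, then correct by a harmonic gradient $\nabla p$ to enforce $\vu\cdot\vN_f=0$ on $\Gamma_f$; your tracking of the compatibility integrals and the $H^{s-1/2}$ regularity of the coefficients built from $\nabla f$ is exactly the bookkeeping the paper's ``sketch'' leaves implicit. The one false step is your closing ``uniqueness'' remark: the system (\ref{eq:div-curl-H}) imposes no zero-mean condition on the tangential components at $\Gamma^+$, so its kernel is two-dimensional and solutions are \emph{not} unique -- but the lemma only claims existence, so this does not affect the result.
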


Now let us prove Proposition \ref{prop:div-curl}.

\begin{proof}
The uniqueness of the solution is a direct consequence of Lemma \ref{lem:ellip-unique}.
Let us prove the existence of the solution. Let $\vu_1$ be a solution of the system (\ref{eq:div-curl-H}) determined by Lemma \ref{lem:div-curl-2}.
By Lemma \ref{lem:div-curl-1}, we can find $\phi$ so that
\begin{align*}
\left\{
\begin{array}{l}
\Delta\phi=g\quad\text{ in}\quad\Om_f^+,\\
\vN_f\cdot\nabla\phi=\vartheta-\beta_1\partial_1f-\beta_2\partial_2f\quad\text{on}\quad \Gamma_f,   \\
\partial_3\phi=0\quad \text{on}\quad\Gamma^{+}.
\end{array}\right.
\end{align*}
Let $\vu=\vu_1+\nabla\phi+(\beta_1,\beta_2,0)$. Then $\vu$ is a unique solution of the system (\ref{eq:div-curl})  and satisfies the desired bound.
\end{proof}

\section{Reformulation of the problem}\label{Sec:Reform}

In this section, we will reformulate the system (\ref{cvs})-(\ref{cvsi}) into a new formulation, which consists of the evolution equations of the follow quantities:
\begin{itemize}
\item The height function of the interface: $f$;
\item The scaled normal velocity on the interface: $\theta=\vu^\pm\cdot\vN_f$;
\item The curl part of velocity and magnetic field in the fluid region: $\vom=\nabla\times\vu$, $\vj=\nabla\times\vh$;
\item The average of tangential part of velocity and magnetic field on top and bottom fixed boundary:
\begin{align*}
\beta_i^\pm(t)=\int_{\BT}u_i^\pm(t,x',\pm1)dx',\quad \gamma_i^\pm(t)=\int_{\BT}h_i^\pm(t,x',\pm1)dx'(i=1,2).
\end{align*}
\end{itemize}

\subsection{Evolution of the scaled normal velocity}
We define
\begin{align}
\theta(t,x')\eqdefa\vu^\pm(t,x',f(t,x'))\cdot\vN_f(t,x').
\end{align}
Thus, we have
\begin{align}\label{eq:form:f}
\partial_tf(t,x')=\theta(t,x').
\end{align}
Clearly, $(1+|\nabla f|^2)^{-1/2}\theta$ is the normal component of the fluid velocity on the interface $\Gamma_f$.
In this subsection, we will derive the evolution equation of $\theta$.

\begin{lemma}\label{rel-uh}
For $\vu=\vupm,\vhpm$, we have
\begin{align}
&({\vu}\cdot{\nabla\vu})\cdot\vN_f-{\partial_3u}_jN_j({\vu}\cdot\vN_f)\big|_{x_3=f(t,x')}\nonumber\\
&=\underline{u}_1\partial_1(\underline{u}_jN_j)+\underline{u}_2\partial_2(\underline{u}_jN_j)
+\sum_{i,j=1,2}\underline{u}_i\underline{u}_j\partial_i\partial_jf.\end{align}
\end{lemma}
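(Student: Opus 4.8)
Here is how I would prove Lemma \ref{rel-uh}. The identity is purely algebraic: it is a consequence of the chain rule for traces on $\Gamma_f$ together with the explicit form $\vN_f=(-\p_1f,-\p_2f,1)$, with no analytic content whatsoever. The plan is to expand the left-hand side, use the pointwise constraint to eliminate the $x_3$-derivative term, and then reorganize.

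First I would record the pointwise relation $u_3=\vu\cdot\vN_f+u_1\p_1f+u_2\p_2f$, which is immediate from the components of $\vN_f$. Splitting the convective term $(u_k\p_ku_j)N_j$ (summation over $j,k\in\{1,2,3\}$) according to whether the transport direction $k$ is tangential or normal gives, at $x_3=f(t,x')$,
\[
(\vu\cdot\nabla\vu)\cdot\vN_f=\big(u_1\p_1u_j+u_2\p_2u_j\big)N_j+u_3(\p_3u_j)N_j.
\]
Substituting the expression for $u_3$ into the last term, the contribution $(\vu\cdot\vN_f)(\p_3u_j)N_j$ cancels exactly against the subtracted quantity $\p_3u_jN_j(\vu\cdot\vN_f)$ in the statement, leaving
\[
(\vu\cdot\nabla\vu)\cdot\vN_f-\p_3u_jN_j(\vu\cdot\vN_f)\big|_{x_3=f}=\sum_{i=1,2}u_i\big(\p_iu_j+\p_if\,\p_3u_j\big)N_j\big|_{x_3=f}.
\]

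Next I would invoke the trace identity $\p_i\underline g=\p_ig(x',f)+\p_3g(x',f)\p_if$ recorded in Section 2 to recognize that $\big(\p_iu_j+\p_if\,\p_3u_j\big)\big|_{x_3=f}=\p_i\underline u_j$ for $i=1,2$, so the right-hand side equals $\sum_{i=1,2}\underline u_i(\p_i\underline u_j)N_j$. Finally, since $N_j$ depends on $x'$ only, with $\p_iN_j=-\p_i\p_jf$ for $j\in\{1,2\}$ and $\p_iN_3=0$, the Leibniz rule yields $\p_i(\underline u_jN_j)=(\p_i\underline u_j)N_j-\sum_{j=1,2}\underline u_j\p_i\p_jf$; solving for $(\p_i\underline u_j)N_j$ and summing over $i=1,2$ produces precisely $\underline u_1\p_1(\underline u_jN_j)+\underline u_2\p_2(\underline u_jN_j)+\sum_{i,j=1,2}\underline u_i\underline u_j\p_i\p_jf$, which is the claimed formula. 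The only point requiring care — the "obstacle", such as it is — is keeping track of the index ranges (the sum over $j$ runs over $\{1,2,3\}$ in terms paired with $N_j$, but the correction $\p_iN_j$ is supported on $j\in\{1,2\}$); there is no genuine difficulty, and the computation applies verbatim to both $\vu=\vupm$ and $\vu=\vhpm$.
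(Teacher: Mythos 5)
Your proof is correct and is essentially the paper's computation run in reverse: you expand the left-hand side using $u_3=\vu\cdot\vN_f+u_1\p_1f+u_2\p_2f$ and the trace chain rule, then reassemble via Leibniz and $\p_iN_j=-\p_i\p_jf$, whereas the paper starts from the right-hand side and unravels the same three identities. Both arguments are identical in content, with the index bookkeeping you highlight handled the same way.
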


\begin{proof}
A direct calculation shows that
\begin{align*}
&\underline{u}_1\partial_1(\underline{u}_jN_j)+\underline{u}_2\partial_2(\underline{u}_jN_j)-\underline{u}_1\underline{u}_j\partial_1N_j-\underline{u}_2\underline{u}_j\partial_2N_j\\
&={u}_1({\partial_1u}_j+{\partial_3u}_j\partial_1f)N_j+{u}_2({\partial_2u}_j+{\partial_3u}_j\partial_2f)N_j\big|_{x_3=f(t,x')}\\
&=u_1\partial_1u_jN_j+u_2\partial_2u_jN_j+u_3\partial_3u_jN_j+(u_1\partial_1f+u_2\partial_2f-u_3)\partial_3u_jN_j\big|_{x_3=f(t,x')}\\
&=(\vu\cdot\nabla\vu)\cdot\vN_f-\partial_3u_jN_j(\vu\cdot\vN_f)\big|_{x_3=f(t,x')}.
\end{align*}
This implies the lemma by recalling $\vN_f=(-\partial_{1}f,-\partial_{2}f, 1)$.
\end{proof}

Now, let us derive the evolution equation of $\theta$.
Using the first equation of (\ref{cvs}), we deduce from Lemma \ref{rel-uh}(recall $\vhp\cdot\vN_f=0$ on $\Gamma_f$) that
\begin{align}\nonumber
\partial_t\theta=&(\partial_t\vu^++\partial_3\vu^+\partial_tf)\cdot\vN_f+\vu^+\cdot\partial_t\vN_f\big|_{x_3=f(t,x')}\\
=&(-\vup\cdot\nabla\vup+\vhp\cdot\nabla\vhp-\nabla p^++\partial_3\vu^+\partial_tf)\cdot\vN_f-
\vu^+\cdot(\partial_1\partial_tf,\partial_1\partial_tf,0)\big|_{x_3=f(t,x')}\nonumber\\\nonumber
=&\big(-(\vup\cdot\nabla)\vup+\partial_3\vu^+(\vup\cdot\vN_f)\big)\cdot\vN_f+(\vhp\cdot\nabla)\vhp\cdot\vN_f\\
&-\vN_f\cdot\nabla p^+-\vu^+\cdot(\partial_1\theta, \partial_2\theta,0)\big|_{x_3=f(t,x')}\nonumber\\
=&-2(\underline{u}^+_1\partial_1\theta+\underline{u}^+_2\partial_2\theta)-\vN_f\cdot\underline{\nabla p^+}-\sum_{i,j=1,2}
\underline{u}_i^+\underline{u}^+_j\partial_i\partial_jf\nonumber\\
&\quad+\sum_{i,j=1,2}\underline{h}^+_i\underline{h}^+_j\partial_i\partial_jf.\label{eq:theta-tup}
\end{align}
A similar derivation gives
\begin{align}\nonumber
\partial_t\theta=&-2(\underline{u}^-_1\partial_1\theta+\underline{u}^-_2\partial_2\theta)-\vN_f\cdot\underline{\nabla p^-}
-\sum_{i,j=1,2}\underline{u}^-_i\underline{u}^-_j\partial_i\partial_jf\\
&\quad+\sum_{i,j=1,2}\underline{h}^-_i\underline{h}^-_j\partial_i\partial_jf.\label{eq:theta-tdown}
\end{align}
It follows from (\ref{eq:theta-tup}) and (\ref{eq:theta-tdown}) that
\begin{align}\nonumber
&2(\underline{u}^+_1\partial_1\theta+\underline{u}^+_2\partial_2\theta)+\vN_f\cdot\underline{\nabla p^+}
+\sum_{i,j=1,2}\underline{u}_i^+\underline{u}^+_j\partial_i\partial_jf-\sum_{i,j=1,2}\underline{h}^+_i\underline{h}^+_j\partial_i\partial_jf\\
&=2(\underline{u}^-_1\partial_1\theta+\underline{u}^-_2\partial_2\theta)+\vN_f\cdot\underline{\nabla p^-}
+\sum_{i,j=1,2}\underline{u}^-_i\underline{u}^-_j\partial_i\partial_jf-\sum_{i,j=1,2}\underline{h}^-_i\underline{h}^-_j\partial_i\partial_jf.\label{eq:theta-d}
\end{align}

Taking the divergence to the first equation of (\ref{cvs}), we get
\begin{align}
\Delta p^\pm=\mathrm{tr}(\nabla\vh^\pm)^2-\mathrm{tr}(\nabla\vu^\pm)^2.
\end{align}

Recall that $\underline{p}^\pm=p^\pm|_{\Gamma_f}$ and $\mathcal{H}^\pm_f$
be the harmonic extension from $\Gamma_f$ to $\Omega^\pm_f$.
Let $p_{\vupm_1, \vupm_2}$ be the solution of elliptic equation
\beno
\left\{
\begin{array}{l}
\Delta p_{\vupm_1, \vupm_2}= -\mathrm{tr}(\nabla\vu_1^\pm\nabla\vu_2^\pm)
\quad \text{in}\quad\Omega^\pm_f,\\
p_{\vupm_1, \vupm_2}=0\quad\text{on}\quad\Gamma_f,\\
\ve_3\cdot\nabla p_{\vupm_1, \vupm_2}=0\quad\text{on}\quad\Gamma^\pm.
\end{array}\right.
\eeno
Then for the pressure $p^\pm$, we have the following important representation
\begin{align}
p^\pm=\mathcal{H}^\pm\underline{p}^\pm+p_{\vupm, \vupm}-p_{\vhpm, \vhpm}.
\end{align}
Thus, we infer from (\ref{eq:theta-d}) that on $\Gamma_f$, we have
\begin{align*}
&\vN_f\cdot\nabla \mathcal{H}^+_f\underline{p}^+
-\vN_f\cdot\nabla \mathcal{H}^-_f\underline{p}^-\\
&=\Big[-2(u^+_1\partial_1\theta+u^+_2\partial_2\theta)-\vN_f\cdot\nabla(p_{\vup, \vup}-p_{\vhp, \vhp})
-\sum_{i,j=1,2}(u_i^+u^+_j-h^+_ih^+_j)\partial_i\partial_jf\Big]\\
&\quad+\Big[2(u^-_1\partial_1\theta+u^-_2\partial_2\theta)+\vN_f\cdot\nabla(p_{\vum, \vum}-p_{\vhm, \vhm})
+\sum_{i,j=1,2}(u^-_iu^-_j-h^-_ih^-_j)\partial_i\partial_jf\Big]\\
&\triangleq -g^++g^-.
\end{align*}
Thanks to the definition of DN operator, we get
\begin{align}
-\mathcal{N}^+_f\underline{p}^+-\mathcal{N}^-_f\underline{p}^-=-g^++g^-.
\end{align}
Recalling that
\begin{align*}
\underline{p}^+-\underline{p}^-=0\quad \text{ on }\quad\Gamma_f,\quad \widetilde{\mathcal{N}}_f={\mathcal{N}}_f^++{\mathcal{N}}_f^-,
\end{align*}
we get
\begin{align*}
\underline{p}^+=\underline{p}^-=\widetilde{\mathcal{N}}^{-1}_f(g^+-g^-).
\end{align*}
Therefore, from the fact that
\begin{align*}
\mathcal{N}^+_f\widetilde{\mathcal{N}}^{-1}_f g^-+\mathcal{N}^-_f\widetilde{\mathcal{N}}^{-1}_fg^+
=\frac12(g^++g^-)-\frac12(\mathcal{N}^+_f-\mathcal{N}^-_f)\widetilde{\mathcal{N}}^{-1}_f ( g^+-g^-),
\end{align*}
we obtain
\begin{align}
\partial_t\theta
=&\mathcal{N}^+_fp^+-g^+=\mathcal{N}^+_f\widetilde{\mathcal{N}}^{-1}_f(g^+-g^-)-g^+\nonumber\\
=&-\mathcal{N}^+_f\widetilde{\mathcal{N}}^{-1}_f g^--\mathcal{N}^-_f\widetilde{\mathcal{N}}^{-1}_f g^+\nonumber\\\nonumber
=&-
\big((\underline{u}^+_1+\underline{u}^-_1)\partial_1\theta+(\underline{u}^+_2+\underline{u}^-_2)\partial_2\theta\big)
\\\nonumber
&-\frac12\sum_{i,j=1,2}\big(\underline{u}_i^+\underline{u}^+_j-\underline{h}^+_i\underline{h}^+_j\big)\partial_i\partial_jf
-\frac12\sum_{i,j=1,2}\big(\underline{u}^-_i\underline{u}^-_j-\underline{h}^-_i\underline{h}^-_j\big)\partial_i\partial_jf\\\nonumber
&+\frac12(\mathcal{N}^+_f-\mathcal{N}^-_f)\widetilde{\mathcal{N}}^{-1}_f
\sum_{i,j=1,2}\big(\underline{u}_i^+\underline{u}^+_j-\underline{h}^+_i\underline{h}^+_j-\underline{u}^-_i\underline{u}^-_j
+\underline{h}^-_i\underline{h}^-_j\big)\partial_i\partial_jf
\\\nonumber
&+(\mathcal{N}^+_f-\mathcal{N}^-_f)\widetilde{\mathcal{N}}^{-1}_f
((\underline{u}^+_1-\underline{u}^-_1)\partial_1\theta+(\underline{u}^+_2-\underline{u}^-_2)\partial_2\theta)\\
&-\mathcal{N}^+_f\widetilde{\mathcal{N}}^{-1}_f(\vN\cdot\underline{\nabla(p_{\vum,\vum}-p_{\vhm,\vhm})})
-\mathcal{N}^-_f\widetilde{\mathcal{N}}^{-1}_f(\vN\cdot\underline{\nabla(p_{\vup, \vup}-p_{\vhp, \vhp})}).\label{eq:theta}
\end{align}

\begin{remark}
From $\div\vu^\pm=0$ and $u_3^\pm=0$ on $\Gamma^\pm$, we know that
\beno
\int_\BT\theta dx'=0,\quad\text{thus }\int_\BT\partial_t\theta dx'=0.
\eeno
Thanks to $\partial_t\theta=\mathcal{N}^+_fp^+-g^+=\mathcal{N}^-_fp^-+g^-$, we get
$$\int_{\T^2}g^\pm dx'=0.$$
Therefore, $\widetilde{\mathcal{N}}^{-1}_fg^\pm$ is well-defined.

However, we do not know whether the functions $$G_1=\sum_{i,j=1,2}(\underline{u}_i^+\underline{u}^+_j-\underline{h}^+_i\underline{h}^+_j-\underline{u}^-_i\underline{u}^-_j+\underline{h}^-_i\underline{h}^-_j)\partial_i\partial_jf$$
as well as
$$
G_2=(\underline{u}^+_1-\underline{u}^-_1)\partial_1\theta+(\underline{u}^+_2-\underline{u}^-_2)\partial_2\theta
$$
have zero integral on $\BT$. So, it is unreasonable to directly apply $\widetilde{\mathcal{N}}^{-1}_f$ to it.
Thus, we make the convention: if $\int_{\BT} h dx'\neq0$, we interpret $\cN_f^{-1}h$ as
$\cN_f^{-1}(h-\int_{\BT} h dx')$.
Note that this does not change the formulation of the system, since
\beno
\int_{\T^2}\f12G_1+G_2 dx'=0.
\eeno
\end{remark}

\subsection{The equations for the vorticity and current}
We denote
\begin{align*}
\vom^\pm\triangleq\nabla\times\vu^\pm,\quad \vj^\pm\triangleq\nabla\times\vh^\pm.
\end{align*}
Then in $\Om_f^\pm$, $(\vom^\pm, \vj^\pm)$ satisfies
\begin{align}\label{eq:vorticity-w}
&\partial_t\vom^\pm+\vu^\pm\cdot\nabla\vom^\pm-\vh^\pm\cdot\nabla\vj^\pm=\vom^\pm\cdot\nabla\vu^\pm-\vj^\pm\cdot\nabla\vh^\pm,\\
&\partial_t\vj^\pm+\vu^\pm\cdot\nabla\vj^\pm-\vh^\pm\cdot\nabla\vom^\pm=\vj^\pm\cdot\nabla\vu^\pm
-\vom^\pm\cdot\nabla\vh^\pm-2\nabla u_i^\pm\times\nabla h_i^\pm,\label{eq:vorticity-h}
\end{align}
where we used the fact that
\begin{align*}
&\veps^{ijk}\partial_ju_l\partial_lh_k-\veps^{ijk}\partial_jh_l\partial_lu_k\\
&=\veps^{ijk}\partial_ju_l(\partial_lh_k-\partial_kh_l)+\veps^{ijk}\partial_ju_l\partial_kh_l-\veps^{ijk}\partial_jh_l\partial_ku_l
+\veps^{ijk}\partial_jh_l(\partial_ku_l-\partial_lu_k)\\
&=-\vj\cdot\nabla\vu+\vom\cdot\nabla\vh-2\nabla u_i\times\nabla h_i.
\end{align*}

\subsection{Tangential velocity on $\Gamma^\pm$}
Let us derive the evolution equations for the following quantities
\begin{align*}
\beta_i^\pm(t)=\int_{\BT}u_i^\pm(t,x',\pm1)dx',\quad \gamma_i^\pm(t)=\int_{\BT}h_i^\pm(t,x',\pm1)dx'.
\end{align*}
The motivation is that we have to recover a vector field by its curl,
divergence, and normal components on upper and bottom boundary in the domain $\Omega_f^\pm$.
However, the solution may be not unique unless the mean values of their tangential components are given on top and bottom boundary.

Thanks to $u_3(t,x',\pm1)\equiv 0$, we deduce that for $i=1,2$
\begin{align*}
  \partial_tu_i+u_{j}\partial_{j}u_i-h_{j}\partial_{j}h_i-\partial_ip=0\quad \text{on}\quad \Gamma_\pm,
\end{align*}
which gives
\begin{align*}
  \partial_t\beta_i^\pm+\int_{\Gamma_\pm}\big(u_{j}\partial_{j}u_i-h_{j}\partial_{j}h_i\big)dx'=0,
\end{align*}
or equivalently
\begin{align}
\beta_i^\pm(t)=\beta_i^\pm(0)-\int_0^t\int_{\Gamma_\pm}\big(u_{j}\partial_{j}u_i-h_{j}\partial_{j}h_i\big)dx'dt.
\end{align}
Similarly, we have
\begin{align}
\gamma_i^\pm(t)=\gamma_i^\pm(0)-\int_0^t\int_{\Gamma_\pm}\big(u_{j}\partial_{j}h_i-h_{j}\partial_{j}u_i\big)dx'dt.
\end{align}

\subsection{Solvability conditions of div-curl system}
We have to recover the divergence-free velocity(magnetic) field from its curl part. Namely, we need to solve
the following div-curl system with certain boundary conditions:
\begin{align}\label{div-curl-temp}
\left\{
\begin{array}{l}
\curl \vu^\pm=\vom^\pm,\quad \div\vu^\pm=0\quad\text{in}\quad \Omega_f^\pm,\\
\vu^\pm\cdot\vN_f =\partial_tf\quad\text{on}\quad\Gamma_{f}, \\
\vu^\pm\cdot\ve_3 = 0, \quad \int_{\Gamma^\pm} u_i^\pm dx'=\beta^\pm_i (i=1,2)\quad \text{on}\quad\Gamma^{\pm}.
\end{array}\right.
\end{align}

The solvability of $\vu$ requires that $\vom$ must satisfy the following two compatibility conditions:
\begin{itemize}
\item[C1.]  $\div\vom^\pm=0$\, in $\Omega_f^\pm$

\item[C2.]  $\int_{\Gamma^\pm}\om_3^\pm dx'=0$,
\end{itemize}
and $\partial_t f$ must be average free, i.e.,
\begin{itemize}
\item[C3.]  $\int_{\BT}\partial_tf dx'=0$.
\end{itemize}

\section{Weak stability condition and linear stability}

\subsection{Weak stability condition}

Recall that the weak stability condition
\begin{align*}
\text{(S1)}\qquad \big|[\vu]\times\vh^+\big|^2+\big|[\vu]\times\vh^-\big|^2 < 2\big|\vh^+\times\vh^-\big|^2\quad \text{on}\quad\Gamma_f.
\end{align*}
It is not easy to see why the weak stability condition (S1) will ensure that the system (\ref{cvs})-(\ref{cvsi}) is well-posed.
So, we reformulate it into a new formulation.

The condition (S1) implies that
\begin{align*}
(\vh^+\cdot\mathbf{e}_1)^2+(\vh^-\cdot\mathbf{e}_1)^2>0,\quad (\vh^+\cdot\mathbf{e}_2)^2+(\vh^-\cdot\mathbf{e}_2)^2>0,
\end{align*}
where $\mathbf{e}_1=(1,0,0)$ and $\mathbf{e}_2=(0,1,0)$. Thanks to $\vu^+\cdot\vN_f=\vu^-\cdot\vN_f$, we know that $[\vu]\cdot\vN_f=0$. Thus, we may assume
\begin{align*}
[\vu]=\nu_1\vh^++\nu_2\vh^-.
\end{align*}
Then (S1) is equivalent to $\nu_1^2+\nu_2^2<2$, which is actually equivalent to
\begin{align*}
\text{(S2)}\qquad  \inf_{\vq\in T_{\Gamma_f}(x),|\vq|=1} 2\big[(\vh^+(x)\cdot\vq)^2+(\vh^-(x)\cdot\vq)^2\big]-([\vu](x)\cdot\vq)^2 >0,
\end{align*}
by simply using Cauchy-Schwartz inequality.
\medskip

In our graph case, we find that (S2) implies the following lemma.

\begin{lemma}\label{lem:stability}
There exists $c_0>0$ such that
\begin{align}\nonumber
\Lambda(\vh^\pm,[\vu])&\eqdefa\sup_{x\in\Gamma_f}\sup_{\varphi_1^2+\varphi_2^2=1}(h_1^+\varphi_1+h_2^+\varphi_2)^2+(h_1^-\varphi_1+h_2^-\varphi_2)^2
-\frac12(v_1\varphi_1+v_2\varphi_2)^2\\
&\ge c_0,\nonumber
\end{align}
where $[\vu]=2(v_1,v_2,v_3)$.
\end{lemma}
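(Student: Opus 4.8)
The plan is to translate condition (S2), which is stated for unit tangent vectors $\vq \in T_{\Gamma_f}(x)$ in $\mathbb{R}^3$, into the two-dimensional statement about $\vphi=(\vphi_1,\vphi_2)$ with $\vphi_1^2+\vphi_2^2=1$ in the definition of $\Lambda$. The bridge between the two is the parametrization of $\Gamma_f$ as a graph $x_3=f(x')$: a basis for $T_{\Gamma_f}(x)$ is given by $\vtau_1=(1,0,\pa_1 f)$ and $\vtau_2=(0,1,\pa_2 f)$, so a general tangent vector has the form $\vq = \vphi_1\vtau_1+\vphi_2\vtau_2 = (\vphi_1,\vphi_2,\vphi_1\pa_1 f+\vphi_2\pa_2 f)$ for $(\vphi_1,\vphi_2)\in\mathbb{R}^2$. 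Since $\vh^\pm\cdot\vN_f=0$ and $[\vu]\cdot\vN_f=0$, both $\vh^\pm$ and $[\vu]$ are themselves tangent to $\Gamma_f$ at each point. The key algebraic identity I would establish is that for any vector $\vc$ tangent to $\Gamma_f$,
\[
\vc\cdot\vq = c_1\vphi_1 + c_2\vphi_2 + c_3(\vphi_1\pa_1 f+\vphi_2\pa_2 f) = (c_1+c_3\pa_1 f)\vphi_1 + (c_2+c_3\pa_2 f)\vphi_2,
\]
but more usefully, using $\vc\cdot\vN_f=0$, i.e. $c_3=c_1\pa_1 f+c_2\pa_2 f$, one can re-express $\vc\cdot\vq$ purely in terms of $c_1\vphi_1+c_2\vphi_2$ up to a factor depending on $\nabla f$. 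I would compute this factor explicitly and check it is bounded above and below on the compact set $\{x'\in\BT\}$ using the uniform bound $\|f\|_{W^{1,\infty}}\le C$ coming from $f\in H^{s+\f12}$ with $s\ge 3$.

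Concretely, the next step is to write out both sides. The quantity inside the supremum in $\Lambda$ is $(h_1^+\vphi_1+h_2^+\vphi_2)^2+(h_1^-\vphi_1+h_2^-\vphi_2)^2-\f12(v_1\vphi_1+v_2\vphi_2)^2$, which (recalling $[\vu]=2(v_1,v_2,v_3)$) equals $(h_1^+\vphi_1+h_2^+\vphi_2)^2+(h_1^-\vphi_1+h_2^-\vphi_2)^2-\f18([\vu]_1\vphi_1+[\vu]_2\vphi_2)^2$. Meanwhile condition (S2) gives, for unit $\vq$, that $2(\vh^+\cdot\vq)^2+2(\vh^-\cdot\vq)^2-([\vu]\cdot\vq)^2>0$. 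For a tangent vector $\vc$, using $c_3=c_1\pa_1 f+c_2\pa_2 f$ and $q_3=\vphi_1\pa_1 f+\vphi_2\pa_2 f$, I would verify the relation
\[
\vc\cdot\vq = c_1\vphi_1+c_2\vphi_2+(c_1\pa_1 f+c_2\pa_2 f)(\vphi_1\pa_1 f+\vphi_2\pa_2 f),
\]
which does \emph{not} factor cleanly; so instead I would work with the \emph{normalized} version: for each fixed $x'$, as $(\vphi_1,\vphi_2)$ ranges over the unit circle, $\vq$ ranges over an ellipse in $T_{\Gamma_f}(x)$, and the ratio $|\vq|^2 = \vphi_1^2+\vphi_2^2 + (\vphi_1\pa_1f+\vphi_2\pa_2f)^2$ is a positive-definite quadratic form in $(\vphi_1,\vphi_2)$ with eigenvalues in $[1, 1+|\nabla f|^2]$, hence bounded between $1$ and $1+C^2$. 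Therefore the quadratic form $Q(\vphi):=2(\vh^+\cdot\vq)^2+2(\vh^-\cdot\vq)^2-([\vu]\cdot\vq)^2$ in $(\vphi_1,\vphi_2)$ is positive definite on $\mathbb{R}^2\setminus\{0\}$ by (S2), with a uniform lower bound in terms of its value on the unit circle divided by $1+C^2$; and by homogeneity $Q(\vphi)/(\vphi_1^2+\vphi_2^2)$ has a positive infimum over $x'\in\BT$ by compactness and continuity of $\vh^\pm,[\vu]$ along $\Gamma_f$.

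The final step is a direct comparison of $Q(\vphi)$ with $8$ times the expression inside the $\Lambda$-supremum. Here I expect the main technical point: $(\vh^+\cdot\vq)^2$ is \emph{not} literally $(h_1^+\vphi_1+h_2^+\vphi_2)^2$ because of the $q_3$ contribution and the $h_3^+$ contribution — however, since $\vh^+\cdot\vN_f=0$, one has $\vh^+\cdot\vq = \vh^+\cdot\vphi_1\vtau_1+\vh^+\cdot\vphi_2\vtau_2$ and $\vh^+\cdot\vtau_i = h_i^+ + h_3^+\pa_i f = h_i^+ + (h_1^+\pa_1f+h_2^+\pa_2f)\pa_i f$, so $\vh^+\cdot\vq$ is a linear form in $(\vphi_1,\vphi_2)$ whose coefficient vector is obtained from $(h_1^+,h_2^+)$ by applying the invertible matrix $I + (\nabla f)(\nabla f)^T$. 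The same matrix $M:=I+(\nabla f)(\nabla f)^T$ (symmetric positive definite, eigenvalues in $[1,1+|\nabla f|^2]$) appears for $\vh^-$ and for $[\vu]$, and one checks $|\vq|^2 = \vphi^T M\vphi$ as well. Hence $Q(\vphi) = 2(M\vec h^+\cdot\vphi)^2+2(M\vec h^-\cdot\vphi)^2-(M\vec{[\vu]}\cdot\vphi)^2$ where $\vec h^\pm=(h_1^\pm,h_2^\pm)$, etc. Substituting $\vpsi=M^{1/2}\vphi$ turns this into $2(M^{1/2}\vec h^+\cdot\vpsi)^2+2(M^{1/2}\vec h^-\cdot\vpsi)^2-(M^{1/2}\vec{[\vu]}\cdot\vpsi)^2$ evaluated over $|\vpsi|^2=\vphi^T M\vphi=|\vq|^2$, i.e. over $\vpsi$ on a circle of radius $|\vq|$. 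So positive-definiteness of $Q$ in $\vphi$ is equivalent to positive-definiteness of the form $2(\vec h^+\cdot\vz)^2+2(\vec h^-\cdot\vz)^2-(\vec{[\vu]}\cdot\vz)^2$ in $\vz$, which is \emph{exactly} $8$ times the $\Lambda$-integrand after substituting $[\vu]=2(v_1,v_2,v_3)$ and $\vz=\vphi$. Since (S2) gives the uniform positivity of the former over the compact interface, taking the infimum over $x\in\Gamma_f$ and the minimum over the unit circle yields the constant $c_0>0$, proving $\Lambda(\vh^\pm,[\vu])\ge c_0$. The only nontrivial point in the write-up is bookkeeping the matrix $M$ and verifying $|\vq|^2=\vphi^T M\vphi$; everything else is a compactness argument plus homogeneity of quadratic forms.
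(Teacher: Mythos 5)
Your proposal is essentially the same approach as the paper's, and it is correct in its logical structure. Both proofs exploit the tangency conditions $\vh^\pm\cdot\vN_f=0$ and $[\vu]\cdot\vN_f=0$, both identify the symmetric positive-definite matrix $M=I+(\nabla f)(\nabla f)^T$ as the object that relates the three-dimensional tangent-plane inner products $\vh^\pm\cdot\vq$, $[\vu]\cdot\vq$ to two-dimensional linear forms in $(\varphi_1,\varphi_2)$, and both finish by compactness of $\Gamma_f\times S^1$ and continuity of $\vh^\pm,[\vu]$. The only presentational difference: the paper picks the tangent vector so that its horizontal components are $M^{-1}(\varphi_1,\varphi_2)$, which makes $\vh^\pm\cdot\vq=h_1^\pm\varphi_1+h_2^\pm\varphi_2$ come out clean in one line; you parametrize by the natural basis $\vtau_i=(\ve_i,\partial_i f)$ so that $\vh^\pm\cdot\vq=(M\vec h^\pm)\cdot\vphi$, and then undo the $M$ with the $\vpsi=M^{1/2}\vphi$ and $\vz=M^{1/2}\vpsi$ substitutions. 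This is a longer route to the same observation but is perfectly valid, and arguably more transparent about why the constraint $|\vq|=1$ versus $\varphi_1^2+\varphi_2^2=1$ is harmless (eigenvalues of $M$ uniformly in $[1,1+\|\nabla f\|_{L^\infty}^2]$).

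One small slip: the final constant you claim, that the quadratic form $2(\vec h^+\cdot\vz)^2+2(\vec h^-\cdot\vz)^2-(\vec{[\vu]}\cdot\vz)^2$ equals \emph{eight} times the $\Lambda$-integrand, does not check out against the statement's coefficient $\frac12$ on the $v$-term (with $\vec{[\vu]}=2\vec v$ the three coefficients come out $(2,2,-4)$ versus $8\times(1,1,-\tfrac12)=(8,8,-4)$, which are not proportional). In fact the version of $\Lambda$ in the Introduction carries coefficient $2$ rather than $\frac12$, and against \emph{that} version your form is exactly $2$ times the $\Lambda$-integrand, so the two forms are proportional and your argument closes; the Lemma's statement appears to have a typo in the coefficient. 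This does not affect the logic of your proof — positivity of one form is equivalent to positivity of the other either way — but the "exactly $8$" claim as written is incorrect and should be replaced by "a positive constant multiple" or corrected to the right factor against the coefficient-$2$ normalization.
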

\begin{proof}
Let $\vq=(q_1, q_2, q_3)\bot\vN_f$ with $q_3=q_1\partial_1f+q_2\partial_2f$ and $(q_1,q_2)$ determined by
\begin{equation*}
\left(
  \begin{array}{cc}
    1+(\partial_1f)^2 & \partial_1f\partial_2f \\
    \partial_1f\partial_2f & 1+(\partial_1f)^2  \\
  \end{array}
\right)
\left(
         \begin{array}{c}
           q_1 \\
           q_2 \\
         \end{array}
\right)
= \left(
         \begin{array}{c}
           \varphi_1 \\
           \varphi_2 \\
         \end{array}
\right).
\end{equation*}
Then with the fact $\vh^\pm\cdot\vN_f=0$, we have
\beno
h_1^\pm\varphi_1+h_2^\pm\varphi_2=\sum_{i=1}^3h_i^\pm q_i.
\eeno
Similarly, we have
\beno
w_1\varphi_1+w_2\varphi_2=\sum_{i=1}^3w_i q_i.
\eeno
Thus, (S2) tells us that
\begin{align*}
\inf_{\varphi_1^2+\varphi_2^2=1} 2[(h_1^+\varphi_1+h_2^+\varphi_2)^2+(h_1^-\varphi_1+h_2^-\varphi_2)^2]
-(w_1\varphi_1+w_2\varphi_2)^2>0.
\end{align*}
The above inequality holds for all $x\in\Gamma_f$. Thus, there exists a constant $c_0>0$ such that
\begin{align*}
\inf_{\varphi_1^2+\varphi_2^2=1} 2[(h_1^+\varphi_1+h_2^+\varphi_2)^2+(h_1^-\varphi_1+h_2^-\varphi_2)^2]
-(w_1\varphi_1+w_2\varphi_2)^2 \ge c_0,
\end{align*}
which gives rise to the lemma.
\end{proof}

\subsection{Linear stability}
Let $(\vu^\pm,\vh^\pm)$ be a constant solution of the system (\ref{cvs}).
The linearized system of (\ref{eq:form:f}) and (\ref{eq:theta}) around $(\vu^\pm,\vh^\pm)$ takes as follows
\begin{align*}
\begin{array}{l}
\partial_tf=\theta,\\
\partial_t\theta=\mathcal{A}f+L(\theta,f),
\end{array}
\end{align*}
where $\mathcal{A}$ is a linear operator of second order defined by
\beno
\mathcal{A}f=-2(w_1\partial_1\theta+w_2\partial_2\theta)+\sum_{i,j=1,2}(-w_iw_j-v_iv_j+\f12h^+_ih^+_j+\f12h^-_ih^-_j)\partial_i\partial_jf,
\eeno
with $(w_1, w_2, w_3)=\f12(\vu^++\vu^-)$ and $(v_1,v_2,v_3)=\f12(\vu^+-\vu^-)=\f12[\vu]$, and $L(\theta,f)$ denotes the lower order linear terms.

It is easy to verify that
\begin{align*}
(\partial_t+w_1\partial_1+w_2\partial_2)^2f=\f12\sum_{i,j=1,2}(-2v_iv_j+h^+_ih^+_j+h^-_ih^-_j)\partial_i\partial_jf+L(\theta,f).
\end{align*}
Let $D_t=\partial_t+w_1\partial_1+w_2\partial_2$, then we get
\begin{align*}
D_t^2f=\f12\sum_{i=1,2}(-2v_iv_j+h^+_ih^+_j+h^-_ih^-_j)\partial_i\partial_jf+L(\theta,f).
\end{align*}
The principal symbol of the operator on the right hand side is
\begin{align}
(v_i\xi_i)^2-\f12\big((h^+_i\xi_i)^2+(h^-_i\xi_i)^2\big),
\end{align}
which is negative by weak stability condition (S2). This means that $f$ satisfies a strictly
hyperbolic equation. Thus, the system is linearly well-posed.

\section{Uniform estimates for the linearized system}

Given ${f}(t,x'),{\vu}^{\pm}(t,x),{\vh}^{\pm}(t,x)$, we assume that there exists $T>0$ and positive constant $L_0, L_1, L_3$ such that for any $t\in [0,T]$, there holds
\begin{align}
&\|(\vu^\pm, \vh^\pm)(t)\|_{L^{\infty}(\Gamma_f)}\le L_0,\\
&\|f(t)\|_{H^{s+\f12}(\BT)}+\|\pa_tf(t)\|_{H^{s-\f12}(\BT)}+\|\vu^\pm(t)\|_{H^{s}(\Omega_f^\pm)}
+\|\vh^\pm(t)\|_{H^{s}(\Omega_f^\pm)}\le L_1,\\
&\|(\partial_t\vu^\pm, \partial_t\vh^\pm)(t)\|_{L^{\infty}(\Gamma_f)}\le L_2,\\
&\|f(t)-f_*\|_{H^{s-\f12}}\le \delta_0,\\
&-(1-c_0)\le f(t,x')\le (1-c_0),\\
&\Lambda(\vh^\pm, [\vu])(t)\ge c_0,\label{ass:stability}
\end{align}
together with
\ben\label{ass:boun}
\left\{
\begin{array}{l}
\div\vu^\pm=\div\vh^\pm=0\quad\text{in}\quad\Omega_f^\pm,\\
\underline{\vh}^\pm\cdot\vN_f=0, \quad \partial_tf=\underline{\vu}^\pm\cdot\vN_f,\\
u_3^\pm=h_3^\pm=0\quad\text{in}\quad \Gamma^\pm.
\end{array}\right.
\een

In this section, we linearize the equivalent system derived in section 5 around $(f,\vu^\pm,\vh^\pm)$, and present the uniform energy estimates
for the linearized system.

\subsection{The linearized system of $(f,\theta)$}
For the system (\ref{eq:form:f}) and (\ref{eq:theta}), we introduce the following linearized system:
\begin{equation}\label{sys:linear-H}
\left\{\begin{split}
\partial_t\bar{f}=&~\bar{\theta},\\
\partial_t\bar{\theta}=&-\left((\underline{u}^+_1+\underline{u}^-_1)
\partial_1\bar{\theta}+(\underline{u}^+_2+\underline{u}^-_2)\partial_2\bar\theta\right)\\
&~-\frac12\sum_{i,j=1,2}\big(\underline{u}_i^+\underline{u}^+_j-\underline{h}^+_i\underline{h}^+_j+\underline{u}^-_i\underline{u}^-_j-\underline{h}^-_i\underline{h}^-_j\big)
\partial_i\partial_j\bar{f}+\mathfrak{g},
\end{split}\right.
\end{equation}
where
\begin{align}
\mathfrak{g}=&~\frac12(\mathcal{N}_f^+-\mathcal{N}_f^-)\bar{\mathcal{N}}_f^{-1}
\sum_{i,j=1,2}\big(\underline{u}_i^+\underline{u}^+_j-\underline{h}^+_i\underline{h}^+_j-\underline{u}^-_i\underline{u}^-_j+\underline{h}^-_i\underline{h}^-_j\big)\partial_i\partial_jf\nonumber\\
&~+(\mathcal{N}_f^+-\mathcal{N}_f^-)\bar{\mathcal{N}}_f^{-1}
\left((\underline{u}^+_1-\underline{u}^-_1)\partial_1\theta+(\underline{u}^+_2-\underline{u}^-_2)\partial_2\theta\right)\nonumber\\
&~-\mathcal{N}_f^+\bar{\mathcal{N}}_f^{-1}\left(\vN_f\cdot\underline{\nabla(p_{\vum,\vum}-p_{\vhm,\vhm})}\right)\nonumber\\
&~-\mathcal{N}_f^-\bar{\mathcal{N}}_f^{-1}\left(\vN_f\cdot\underline{\nabla(p_{\vup, \vup}-p_{\vhp, \vhp})}\right)\nonumber\\
\triangleq& \mathfrak{g}_1+\cdots+\mathfrak{g}_4.\label{eq:g-def}
\end{align}
 Recall that $\cN_f^{-1}h$ is interpreted as $\cN_f^{-1}(h-\int_{\BT} h dx')$ if $\int_{\BT} h dx'\neq0.$
 We remark that $\int_{\BT}\bar\theta dx'$ may not vanish since we have performed the linearization.\medskip

Let us introduce the energy functional $E_s$ defined by
\begin{align}\nonumber
E_s(\partial_t\bar{f},\bar{f})\eqdefa &\big\|(\partial_t+w_i\partial_i)\Ds\bar{f}\big\|_{L^2}^2
-\big\|v_i\partial_i\Ds\bar{f}\big\|_{L^2}^2\\
&+\frac12\big\|\underline{h}^+_i\partial_i\Ds\bar{f}\big\|_{L^2}^2
+\frac12\big\|\underline{h}^-_i\partial_i\Ds\bar{f}\big\|_{L^2}^2,
\end{align}
where $\langle \na\rangle^s f=\mathcal{F}^{-1}((1+|\xi|^2)^{\f s 2}\widehat{f})$ and
\begin{align*}
w_i=\frac12(\underline{u}^+_i+\underline{u}^-_i),\quad v_i=\frac12(\underline{u}^+_i-\underline{u}^-_i).
\end{align*}
It is easy to see that there exists $C(L_0)>0$ so that
\begin{align}\label{linear:equi-norm-1}
E_s(\partial_t\bar{f},\bar{f})\le C(L_0)\left(\|\partial_t\bar{f}\|_{H^{s-\f12}}^2
+\|\bar{f}\|_{H^{s+\f12}}^2\right).
\end{align}
The stability condition (\ref{ass:stability}) insures that there exists $C(c_0,L_0)$ so that
\begin{align}\label{linear:equi-norm-2}
&\|\partial_t \bar{f}\|_{H^{s-\f12}}^2+\|\bar{f}\|_{H^{s+\f12}}^2
\le C(c_0,L_0)\Big\{E_s(\partial_t\bar{f},\bar{f})+\|\partial_t\bar{f}\|_{L^2}^2+\|\bar{f}\|_{L^2}^2\Big\}.
\end{align}

\begin{proposition}\label{prop:f-L}
Assume that $\mathfrak{g}\in L^\infty(0,T;H^{s-\f12}(\T^2))$. Given the initial data $(\bar \theta_0, \bar f_0)\in H^{s-\f12}\times H^{s+\f12}(\T^2)$, there exists a unique solution $(\bar f,\bar \theta)\in C\big([0,T];H^{s+\f12}\times H^{s-\f12}(\T^2)\big)$ to the system (\ref{sys:linear-H}) so that
\begin{align*}
&\sup_{t\in[0,T]}\left(\|\partial_t\bar{f}(t)\|_{H^{s-\f12}}^2+\|\bar{f}(t)\|_{H^{s+\f12}}^2\right)\\
 &\quad\le C(c_0,L_0)\left(\|\bar\theta_0\|_{H^{s-\f12}}^2+\|\bar f_0\|_{H^{s+\f12}}^2+\int_0^T\|\mathfrak{g}(\tau)\|_{H^{s-\f12}}d\tau\right)e^{C(c_0, L_1,L_2)T}.
 \end{align*}
\end{proposition}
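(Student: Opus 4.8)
The plan is to run an energy method on the second-order symmetrizer-type functional $E_s(\partial_t\bar f,\bar f)$ together with a standard iteration (or Galerkin) scheme for existence. First I would reduce the first-order system (\ref{sys:linear-H}) to a single second-order equation for $\bar f$: differentiating $\partial_t\bar f=\bar\theta$ in time and substituting the second equation gives
\begin{align*}
\partial_t^2\bar f+\big((\underline u^+_1+\underline u^-_1)\partial_1+(\underline u^+_2+\underline u^-_2)\partial_2\big)\partial_t\bar f
=-\tfrac12\sum_{i,j}\big(\underline u^+_i\underline u^+_j-\underline h^+_i\underline h^+_j+\underline u^-_i\underline u^-_j-\underline h^-_i\underline h^-_j\big)\partial_i\partial_j\bar f+\mathfrak g.
\end{align*}
With $w_i=\tfrac12(\underline u^+_i+\underline u^-_i)$, $v_i=\tfrac12(\underline u^+_i-\underline u^-_i)$ and the algebraic identity $\underline u^+_i\underline u^+_j+\underline u^-_i\underline u^-_j=2w_iw_j+2v_iv_j$, the principal part becomes $(\partial_t+w_i\partial_i)^2\bar f=\sum_{i,j}\big(\tfrac12\underline h^+_i\underline h^+_j+\tfrac12\underline h^-_i\underline h^-_j-v_iv_j\big)\partial_i\partial_j\bar f+(\text{first order})+\mathfrak g$, so that $E_s$ is exactly the natural energy: the $(\partial_t+w_i\partial_i)\langle\nabla\rangle^{s-\f12}\bar f$ term controls the time derivative, and the stability condition (\ref{ass:stability}), i.e. $\Lambda(\vh^\pm,[\vu])\ge c_0$, makes $\tfrac12\|\underline h^+_i\partial_i\langle\nabla\rangle^{s-\f12}\bar f\|_{L^2}^2+\tfrac12\|\underline h^-_i\partial_i\langle\nabla\rangle^{s-\f12}\bar f\|_{L^2}^2-\|v_i\partial_i\langle\nabla\rangle^{s-\f12}\bar f\|_{L^2}^2$ a coercive (up to low frequencies) quadratic form in $\nabla\langle\nabla\rangle^{s-\f12}\bar f$, which is the content of (\ref{linear:equi-norm-1})--(\ref{linear:equi-norm-2}).

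\emph{A priori estimate.} I would apply $\langle\nabla\rangle^{s-\f12}$ to the second-order equation, pair with $(\partial_t+w_i\partial_i)\langle\nabla\rangle^{s-\f12}\bar f$ in $L^2$, and compute $\tfrac{d}{dt}E_s$. The time derivative of $E_s$ produces: (i) terms where $\partial_t$ hits the coefficients $w_i$, $v_i$, $\underline h^\pm_i$ — these are bounded by $\|(\partial_t\vu^\pm,\partial_t\vh^\pm)\|_{L^\infty(\Gamma_f)}\le L_2$ times the energy, using the trace/chain-rule identity $\partial_i\underline g=\underline{\partial_ig}+\underline{\partial_3g}\,\partial_if$ together with (\ref{linear:equi-norm-1}); (ii) the principal cancellation in which $(\partial_t+w_i\partial_i)$ of the $h$-terms matched against $\partial_t^2\bar f+\cdots$ reproduces, up to commutators, the same coefficients — this is where one integrates by parts and uses that the second-order operators are in divergence-like form; (iii) commutators $[\langle\nabla\rangle^{s-\f12},\text{coefficient}]\partial_i\partial_j$ and $[\langle\nabla\rangle^{s-\f12},w_i\partial_i]$, each of which loses at most the top derivative and is controlled by the Sobolev algebra property in $H^s$ (note $\underline u^\pm,\underline h^\pm\in H^{s-\f12}(\BT)$ by trace from $H^s(\Omega_f^\pm)$, and $s-\f12>\f32$ so these are in $W^{1,\infty}$) times $\|\bar f\|_{H^{s+\f12}}^2$; (iv) the source contribution $\big|(\mathfrak g,(\partial_t+w_i\partial_i)\langle\nabla\rangle^{s-\f12}\langle\nabla\rangle^{s-\f12}\bar f)\big|$, which after one integration by parts in the top-order piece is bounded by $\|\mathfrak g\|_{H^{s-\f12}}\big(\|\partial_t\bar f\|_{H^{s-\f12}}+\|\bar f\|_{H^{s+\f12}}\big)$. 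I also need the low-frequency quantities $\|\partial_t\bar f\|_{L^2}^2+\|\bar f\|_{L^2}^2$ appearing in (\ref{linear:equi-norm-2}): these satisfy their own trivial energy inequality directly from (\ref{sys:linear-H}), since $\mathfrak g\in H^{s-\f12}\hookrightarrow L^2$ and $\partial_i\partial_j\bar f\in L^2$ is absorbed into $\|\bar f\|_{H^{s+\f12}}$. Collecting, I get $\tfrac{d}{dt}\big(E_s+\|\partial_t\bar f\|_{L^2}^2+\|\bar f\|_{L^2}^2\big)\le C(c_0,L_1,L_2)\big(E_s+\|\partial_t\bar f\|_{L^2}^2+\|\bar f\|_{L^2}^2\big)+C(c_0,L_0)\|\mathfrak g\|_{H^{s-\f12}}\big(E_s+\cdots\big)^{1/2}$, and Gronwall plus (\ref{linear:equi-norm-1})--(\ref{linear:equi-norm-2}) yields exactly the stated bound, the $\int_0^T\|\mathfrak g\|_{H^{s-\f12}}d\tau$ appearing because one divides through by $(E_s+\cdots)^{1/2}$.

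\emph{Existence and uniqueness.} For existence I would regularize: either mollify the coefficients and the equation (Friedrichs mollifier $J_\eps$), or pass to a Galerkin truncation in the $x'$-Fourier modes; in each case the truncated problem is a linear ODE system in the finitely many (or smoothed) unknowns with the energy estimate above holding uniformly in the regularization parameter, giving a uniform bound in $C([0,T];H^{s+\f12}\times H^{s-\f12})$. Weak-$*$ compactness extracts a limit solving (\ref{sys:linear-H}) in the distributional sense, and the equation itself then gives $\partial_t\bar f\in C([0,T];H^{s-\f12})$, $\bar f\in C([0,T];H^{s+\f12})$ after the usual Bona--Smith / weak-continuity-plus-norm-continuity argument. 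Uniqueness follows by applying the a priori estimate to the difference of two solutions with $\mathfrak g=0$ and zero data. The main obstacle I expect is step (ii)--(iii) of the a priori estimate: tracking the principal-order cancellation between $\tfrac{d}{dt}$ of the $h$-part of $E_s$ and the pairing of the principal right-hand side against $(\partial_t+w_i\partial_i)\langle\nabla\rangle^{s-\f12}\bar f$, so that after integration by parts only commutators remain, and verifying every commutator genuinely costs no more than the available $H^s$-regularity of the (traces of the) coefficients — this is what forces the hypothesis $s\ge 3$ and is the place where the symmetric structure $\tfrac12\underline h^+_i\underline h^+_j+\tfrac12\underline h^-_i\underline h^-_j-v_iv_j$ together with the coercivity $\Lambda\ge c_0$ must be used simultaneously.
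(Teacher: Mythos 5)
Your proposal matches the paper's proof essentially step for step: the same reduction to the second-order equation $(\partial_t+w_i\partial_i)^2\bar f=\tfrac12\sum_{i,j}(-2v_iv_j+\underline h^+_i\underline h^+_j+\underline h^-_i\underline h^-_j)\partial_i\partial_j\bar f+(\text{lower order})+\mathfrak g$, the same symmetrizer energy $E_s$ tested against $(\partial_t+w_i\partial_i)\Ds\bar f$, the same splitting into principal, commutator, and coefficient-derivative contributions handled by integration by parts and Lemma~\ref{lem:commutator}, the same low-frequency supplement $\|\partial_t\bar f\|_{L^2}^2+\|\bar f\|_{L^2}^2$ needed for (\ref{linear:equi-norm-2}), and the same Gronwall closing. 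Your treatment of the source term (dividing by $(E_s+\cdots)^{1/2}$ to land directly on $\int_0^T\|\mathfrak g\|_{H^{s-\f12}}\,d\tau$) is in fact cleaner than the paper's, which applies Young's inequality to get $\int_0^T\|\mathfrak g\|_{H^{s-\f12}}^2\,d\tau$ in the intermediate step and then states the unsquared integral in the proposition.
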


\begin{proof}
We only present the uniform estimates, which ensure the existence and uniqueness of the solution.
Using the fact that
\begin{align*}
\pa_t^2\bar f=-2\sum_{i=1,2}w_i\pa_i\pa_t\bar f+\f12\sum_{i,j=1,2}(-2w_iw_j-2v_iv_j+\underline{h}^+_i\underline{h}^+_j+\underline{h}^-_i\underline{h}^-_j)\partial_i\partial_j\bar f+\mathfrak{g},
\end{align*}
a direct calculation shows that
\begin{align*}
&\frac12\frac{d}{dt}\big\|(\partial_t+w_i\partial_i)\Ds\bar{f}\big\|_{L^2(\BT)}^2\\
&= \Big\langle(\partial_t+w_i\partial_i)\Ds\bar{f}, \Ds\partial_{t}^2\bar{f}+w_i\partial_i(\Ds\partial_t\bar{f})
+\partial_tw_i\partial_i\Ds\bar{f}\Big\rangle\\
&=\Big\langle(\partial_t+w_i\partial_i)\Ds\bar{f}, \Ds\big(-2w_i\partial_i\partial_t\bar{f}
-(w_iw_j+v_iv_j)\partial_i\partial_j\bar{f}+\frac12(\underline{h}^+_i\underline{h}^+_j+\underline{h}^-_i\underline{h}^-_j)\partial_i\partial_j\bar{f}\big)\Big\rangle\nonumber\\
&\quad +\Big\langle(\partial_t+w_i\partial_i)\Ds\bar{f}, \Ds
\mathfrak{g}+w_i\partial_i(\Ds\partial_t\bar{f})
+\partial_tw_i\partial_i\Ds\bar{f}\Big\rangle\\
&=\Big\langle(\partial_t+w_i\partial_i)\Ds\bar{f}, -w_i\partial_i\Ds\partial_t\bar{f}\Big\rangle\\
&\quad+\Big\langle(\partial_t+w_i\partial_i)\Ds\bar{f},-(w_iw_j+v_iv_j)\partial_i\partial_j\Ds\bar{f}+\frac12(\underline{h}^+_i\underline{h}^+_j+\underline{h}^-_i\underline{h}^-_j)
\partial_i\partial_j\Ds\bar{f})\Big\rangle\nonumber\\
&\quad+\Big\langle(\partial_t+w_i\partial_i)\Ds\bar{f},\big[w_i, \Ds\big]\partial_i\partial_t\bar{f}\Big\rangle\nonumber\\
&\quad+\Big\langle(\partial_t+w_i\partial_i)\Ds\bar{f},\big[w_iw_j+v_iv_j-\frac12\underline{h}^+_i\underline{h}^+_j-\frac12\underline{h}^-_i\underline{h}^-_j,\Ds\big]\partial_i\partial_j\bar{f})\Big\rangle\nonumber\\
&\quad +\Big\langle(\partial_t+w_i\partial_i)\Ds\bar{f},\Ds
\mathfrak{g}+\partial_tw_i\partial_i\Ds\bar{f}\Big\rangle\\
&\triangleq I_1+\cdots I_5.
\end{align*}

It follows from Lemma \ref{lem:commutator} that
\begin{align*}
I_3\le& 2\|(\partial_t+w_i\partial_i)\Ds\bar{f}\|_{L^2}\big\|\big[w_i, \Ds\big]\partial_i\partial_t\bar{f}\big\|_{L^2} \\
\le& CE_s(\partial_t\bar{f},\bar{f})^\f12\|w\|_{H^{s-\f12}}\|\pa_t\bar f\|_{H^{s-\f12}}
\end{align*}
as well as
\begin{align*}
I_4\le CE_s(\partial_t\bar{f},\bar{f})^\f12\Big(\|w\|_{H^{s-\f12}}^2+\|v\|_{H^{s-\f12}}^2+\|\underline{\vh}^\pm\|_{H^{s-\f12}}^2\Big)\|\bar f\|_{H^{s+\f12}}.
\end{align*}
Obviously, it holds that
\beno
I_5\le E_s(\partial_t\bar{f},\bar{f})^\f12\big(\|\mathfrak{g}\|_{H^{s-\f12}}+\|\partial_tw\|_{L^\infty}\|\bar f\|_{H^{s+\f12}}\big).
\eeno

We get by integration by parts that
\begin{align*}
&\Big\langle\partial_t\Ds\bar{f}, ~-w_i\partial_i\Ds\partial_t\bar{f}\Big\rangle
\le \|\partial_iw_i\|_{L^\infty}\|\partial_t\bar{f}\|_{H^{s-\f12}}^2,\\
&\Big\langle w_i\partial_i\Ds\bar{f}, ~-w_i\partial_i\Ds\partial_t\bar{f}\Big\rangle
+\frac12\frac{d}{dt}\|w_i\partial_i\Ds\bar{f}\|_{L^2}^2\\
&\quad\qquad=\Big\langle w_i\partial_i \Ds\bar{f},\partial_tw_i\partial_i\Ds\bar{f}\Big\rangle\le
\|w\|_{L^\infty} \|\partial_tw\|_{L^\infty}\|\bar{f}\|_{H^{s+\f12}}^2,
\end{align*}
which give rise to
\beno
I_1\le -\frac12\frac{d}{dt}\|w_i\partial_i\Ds\bar{f}\|_{L^2}^2+\big(1+\|w\|_{W^{1,\infty}}+\|\partial_tw\|_{L^\infty}\big)^2
\Big(\|\bar{f}\|_{H^{s+\f12}}^2+\|\partial_t\bar{f}\|_{H^{s-\f12}}^2\Big).
\eeno

Similarly, we have
\begin{align*}
&\Big\langle\partial_t\Ds\bar{f}, -w_iw_j\partial_i\partial_j\Ds\bar{f}\Big\rangle
-\frac12\frac{d}{dt}\|w_i\partial_i\Ds\bar{f}\|_{L^2}^2\\
&=-\Big\langle w_i\partial_i\Ds\bar{f}, ~\partial_tw_i
\partial_i\Ds\bar{f}\Big\rangle+\Big\langle\Ds\partial_t\bar{f}, ~\partial_i(w_iw_j)\partial_j\Ds\bar{f}\Big\rangle\\
&\le\|w\|_{L^\infty}\big(\|\partial_tw\|_{L^\infty}+\|\nabla w\|_{L^\infty}\big)
\Big(\|\bar{f}\|_{H^{s+\f12}}^2+\|\partial_t\bar{f}\|_{H^{s-\f12}}^2\Big),\\
&\Big\langle w_k\partial_k\Ds\bar{f}, -w_iw_j\partial_i\partial_j\Ds\bar{f}\Big\rangle\\
&=\Big\langle \partial_i(w_kw_iw_j)\partial_k\Ds\bar{f}, \partial_j\Ds\bar{f}\Big\rangle
-\Big\langle w_kw_iw_j\partial_k\partial_i\Ds\bar{f},\partial_j\Ds\bar{f}\Big\rangle\\
&=\Big\langle \partial_i(w_kw_iw_j)\partial_k\Ds\bar{f}, \partial_j\Ds\bar{f}\Big\rangle
-\Big\langle w_k\partial_k(w_i\partial_i\Ds\bar{f}),w_j\partial_j\Ds\bar{f}\Big\rangle\\
&\quad+\Big\langle w_k(\partial_kw_i)\partial_i\Ds\bar{f}), w_j\partial_j\Ds\bar{f}\Big\rangle\\
&\le C \|w\|^2_{L^\infty}\|\nabla w\|_{L^\infty}\|\bar{f}\|_{H^{s+\f12}}^2,
\end{align*}
as well as
\begin{align*}
&\Big\langle\partial_t\Ds\bar{f}, -v_iv_j\partial_i\partial_j\Ds\bar{f}\Big\rangle\\
&\le\frac12\frac{d}{dt}\|v_i\partial_i\Ds\bar{f}\|_{L^2}^2+\|v\|_{L^\infty}
\big(\|\partial_tv\|_{L^\infty}+\|\nabla v\|_{L^\infty}\big)
\Big(\|\bar{f}\|_{H^{s+\f12}}^2+\|\partial_t\bar{f}\|_{H^{s-\f12}}^2\Big),\\
&\Big\langle\partial_t\Ds\bar{f}, \underline{h}^\pm_i\underline{h}^\pm_j\partial_i\partial_j\Ds\bar{f}\Big\rangle\\
&\le-\frac12\frac{d}{dt}\|\underline{h}^\pm_i\partial_i\Ds\bar{f}\|_{L^2}^2+\|\underline{\vh}^\pm\|_{L^\infty}
\big(\|\partial_t\underline{\vh}^\pm\|_{L^\infty}+\|\nabla \underline{\vh}^\pm\|_{L^\infty}\big)
\Big(\|\bar{f}\|_{H^{s+\f12}}^2+\|\partial_t\bar{f}\|_{H^{s-\f12}}^2\Big).
\end{align*}
Thus, we obtain
\begin{align*}
I_2\le& \frac12\frac{d}{dt}\|w_i\partial_i\Ds\bar{f}\|_{L^2}^2+\frac12\frac{d}{dt}\|v_i\partial_i\Ds\bar{f}\|_{L^2}^2\\
&-\frac14\frac{d}{dt}\|\underline{h}^+_i\partial_i\Ds\bar{f}\|_{L^2}^2-\frac14\frac{d}{dt}\|\underline{h}^-_i\partial_i\Ds\bar{f}\|_{L^2}^2\\
&+C\big(1+\|(\underline{\vu}^\pm,\underline{\vh}^\pm)\|_{W^{1,\infty}}+\|\partial_t(\underline{\vu}^\pm, \underline{\vh}^\pm)\|_{L^\infty}\big)^3\Big(\|\bar{f}\|_{H^{s+\f12}}^2+\|\partial_t\bar{f}\|_{H^{s-\f12}}^2\Big).
\end{align*}

Putting the estimates of $I_1,\cdots, I_5$ together, we conclude that
\begin{align*}
\frac{d}{dt}E_s(&\partial_t\bar{f},\bar{f})\le \|\mathfrak{g}\|_{H^{s-\f12}}^2\\ +&C(L_0)\big(1+\|(\underline{\vu}^\pm,\underline{\vh}^\pm)\|_{H^{s-\f12}}+\|\partial_t(\underline{\vu}^\pm,\underline{\vh}^\pm)\|_{L^\infty}\big)^3\Big(\|\bar{f}\|_{H^{s+\f12}}^2+\|\partial_t\bar{f}\|_{H^{s-\f12}}^2\Big).
\end{align*}
On the other hand, it is easy to show that
\begin{align*}
\frac{d}{dt}\big(\|\partial_t\bar{f}\|_{L^2}^2+\|\bar{f}\|_{L^2}^2\big)\le C(L_0)\Big(\|\bar{f}\|_{H^{s+\f12}}^2+\|\partial_t\bar{f}\|_{H^{s-\f12}}^2\Big)
+\|\mathfrak{g}\|_{L^2}^2.
\end{align*}

Let $\mathcal{E}(t)\triangleq\|\bar{f}(t)\|_{H^{s+\f12}}^2+\|\partial_t\bar{f}(t)\|_{H^{s-\f12}}^2$.
Then we get by (\ref{linear:equi-norm-2}) that
\begin{align*}
\mathcal{E}(t)\le C(c_0,L_0)\Big(&\|\bar\theta_0\|_{H^{s-\f12}}^2+\|\bar f_0\|_{H^{s+\f12}}^2+\int_0^t\|\mathfrak{g}(\tau)\|_{H^{s-\f12}}^2d\tau\\
&+\int_0^t\big(1+\|(\underline{\vu}^\pm,\underline{\vh}^\pm)(\tau)\|_{H^{s-\f12}}+\|\partial_t(\underline{\vu}^\pm, \underline{\vh}^\pm)(\tau)\|_{L^\infty}\big)^3\mathcal{E}(\tau)d\tau\Big),
\end{align*}
which along with Lemma \ref{lem:basic} gives
\begin{align*}
\mathcal{E}(t)\le C(c_0,L_0)\Big(&\|\bar\theta_0\|_{H^{s-\f12}}^2+\|\bar f_0\|_{H^{s+\f12}}^2+\int_0^t\|\mathfrak{g}(\tau)\|_{H^{s-\f12}}^2d\tau+C(L_1,L_2)\int_0^t\mathcal{E}(\tau)d\tau\Big).
\end{align*}
This gives the desired estimate by Gronwall's inequality.
\end{proof}

Let us conclude this subsection by the estimate of $\mathfrak{g}$ defined by (\ref{eq:g-def}).

\begin{lemma}\label{lem:non-g}
It holds that
\begin{align*}
\|\mathfrak{g}\|_{H^{s-\f12}}\le C(L_1).
\end{align*}
\end{lemma}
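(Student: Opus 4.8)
The plan is to estimate each of the four pieces $\mathfrak{g}_1,\dots,\mathfrak{g}_4$ in the decomposition \eqref{eq:g-def} separately, in the norm $H^{s-\f12}(\T^2)$, using the mapping properties of the Dirichlet--Neumann operators established in Proposition \ref{prop:DN-Hs} and Proposition \ref{prop:DN-inverse}, together with the algebra property of $H^{\sigma}(\T^2)$ for $\sigma>1$ and the trace/product estimates of Lemma \ref{lem:basic}. Throughout, all quantities $f$, $\vu^\pm$, $\vh^\pm$, $\pa_t f$ enter only through the norms controlled by $L_1$ (via the standing assumptions of Section 7), and $f\in\Upsilon(\delta_0,s-\f12)$, so every constant $C\big(c_0,\|f\|_{H^{s+\f12}}\big)$ that appears may be absorbed into $C(L_1)$.

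First I would handle $\mathfrak{g}_1$ and $\mathfrak{g}_2$, which have the common structure $(\mathcal{N}_f^+-\mathcal{N}_f^-)\bar{\mathcal{N}}_f^{-1}(\text{source})$. For $\mathfrak{g}_1$ the source is $G_1=\sum_{i,j}(\underline{u}^+_i\underline{u}^+_j-\underline{h}^+_i\underline{h}^+_j-\underline{u}^-_i\underline{u}^-_j+\underline{h}^-_i\underline{h}^-_j)\pa_i\pa_j f$. By the trace theorem each $\underline{u}^\pm_i,\underline{h}^\pm_i\in H^{s-\f12}(\T^2)$ with norm $\le C(L_1)$, and $\pa_i\pa_j f\in H^{s-\f32}$. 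Since $s-\f12>1$ the space $H^{s-\f12}$ is an algebra, so the products of the coefficients lie in $H^{s-\f12}$; multiplying by $\pa_i\pa_j f\in H^{s-\f32}$ gives $G_1\in H^{s-\f32}$ with $\|G_1\|_{H^{s-\f32}}\le C(L_1)$ (here one uses the standard product rule $H^{s-\f12}\cdot H^{s-\f32}\hookrightarrow H^{s-\f32}$, valid since $s-\f12>\f32$ when $s\ge3$; the full strength $s\ge3$ is what makes this clean). Then $\bar{\mathcal{N}}_f^{-1}=(\mathcal N_f^++\mathcal N_f^-)^{-1}$ — whose inverse obeys a gain of one derivative by the same argument as Proposition \ref{prop:DN-inverse} applied to $\widetilde{\mathcal N}_f$ — maps $G_1$ (after subtracting its mean, per the convention) into $H^{s-\f12}_0$; and finally $\mathcal N_f^+-\mathcal N_f^-$ is of order $0$ by the second inequality of Proposition \ref{prop:DN-Hs}, losing no derivative, so $\mathfrak g_1\in H^{s-\f12}$ with $\|\mathfrak g_1\|_{H^{s-\f12}}\le C(L_1)$. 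For $\mathfrak g_2$ the source is $G_2=(\underline u^+_1-\underline u^-_1)\pa_1\theta+(\underline u^+_2-\underline u^-_2)\pa_2\theta$; recalling $\theta=\pa_tf\in H^{s-\f12}$ so $\pa_i\theta\in H^{s-\f32}$, the same chain of estimates gives $\|\mathfrak g_2\|_{H^{s-\f12}}\le C(L_1)$.

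Next I would treat $\mathfrak g_3$ and $\mathfrak g_4$, which have the form $-\mathcal N_f^\pm\bar{\mathcal N}_f^{-1}\big(\vN_f\cdot\underline{\nabla(p_{\vu^\mp,\vu^\mp}-p_{\vh^\mp,\vh^\mp})}\big)$. The auxiliary pressures $p_{\vu^\mp,\vu^\mp}$, $p_{\vh^\mp,\vh^\mp}$ solve elliptic problems with right-hand side $-\mathrm{tr}(\nabla\vu^\mp\nabla\vu^\mp)\in H^{s-2}(\Omega_f^\mp)$ (product of two $H^{s-1}$ functions, using Lemma \ref{lem:basic}(3) and $s\ge3$), zero Dirichlet data on $\Gamma_f$ and zero Neumann data on $\Gamma^\mp$; by elliptic regularity (Lemma \ref{lem:div-curl-1}, or standard theory) the gradient $\nabla p_{\vu^\mp,\vu^\mp}\in H^{s-1}(\Omega_f^\mp)$ with norm $\le C(L_1)$. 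Taking the trace on $\Gamma_f$ and contracting with $\vN_f\in H^{s-\f12}$ yields $\vN_f\cdot\underline{\nabla(p_{\vu^\mp,\vu^\mp}-p_{\vh^\mp,\vh^\mp})}\in H^{s-\f32}(\T^2)$ with norm $\le C(L_1)$. Then $\bar{\mathcal N}_f^{-1}$ gains one derivative, landing in $H^{s-\f12}_0$, and $\mathcal N_f^\pm$ loses one derivative by the first inequality of Proposition \ref{prop:DN-Hs}, so the composition maps into $H^{s-\f32}$ — which would be a half-derivative short. To recover the needed $H^{s-\f12}$ I would instead note that $\bar{\mathcal N}_f^{-1}$ applied to an $H^{s-\f32}$ function actually lands in $H^{s-\f12}$, on which $\mathcal N_f^\pm$ is bounded into $H^{s-\f32}$... so one must in fact use a full gain: the source is in $H^{s-\f32}$, $\bar{\mathcal N}_f^{-1}:H^{s-\f32}\to H^{s-\f12}$, and although $\mathcal N_f^\pm:H^{s-\f12}\to H^{s-\f32}$ in general, here the cancellation in $\mathcal N_f^+-\mathcal N_f^-$ is \emph{not} available since only a single $\mathcal N_f^\pm$ appears. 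The resolution is that the elliptic regularity can be pushed one order higher: $\mathrm{tr}(\nabla\vu^\mp\nabla\vu^\mp)$ lies in $H^{s-2}$, but the \emph{normal trace of its gradient} is controlled in $H^{s-\f32}$, and combined with the explicit structure one gets the source actually in $H^{s-\f12}$ directly — more precisely $\vN_f\cdot\underline{\nabla p}$ lies in $H^{s-\f12}$ because $\nabla p\in H^{s-1}(\Omega_f^\mp)$ has a well-defined normal trace in $H^{s-\f32}$ on $\Gamma_f$, and bootstrapping the elliptic estimate using $\vu^\mp\in H^s$ rather than $H^{s-1}$ for the coefficients gives half a derivative more. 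With the source in $H^{s-\f12}$, $\bar{\mathcal N}_f^{-1}$ lands in $H^{s+\f12}$ and $\mathcal N_f^\pm$ returns to $H^{s-\f12}$, giving $\|\mathfrak g_3\|_{H^{s-\f12}}+\|\mathfrak g_4\|_{H^{s-\f12}}\le C(L_1)$.

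The main obstacle is precisely this half-derivative bookkeeping in $\mathfrak g_3,\mathfrak g_4$: one must make sure the auxiliary-pressure estimates are carried out at the sharp level (exploiting $\vu^\pm,\vh^\pm\in H^s(\Omega_f^\pm)$, not merely their curls in $H^{s-1}$, and the fact that $\mathrm{tr}(\nabla\vu\nabla\vu)$ has divergence-form structure so that $\vN_f\cdot\nabla p$ gains regularity over a naive count) and that the mean-value convention for $\bar{\mathcal N}_f^{-1}$ is correctly applied when the sources $G_1,G_2$ are not average-free. Once the sources are placed in $H^{s-\f32}$ (for $\mathfrak g_1,\mathfrak g_2$) and $H^{s-\f12}$ (for $\mathfrak g_3,\mathfrak g_4$), Propositions \ref{prop:DN-Hs} and \ref{prop:DN-inverse} and the algebra property close the estimate, yielding $\|\mathfrak g\|_{H^{s-\f12}}\le C(L_1)$ as claimed. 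Adding the four bounds completes the proof.
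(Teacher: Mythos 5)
Your decomposition into $\mathfrak g_1,\dots,\mathfrak g_4$, the use of Propositions~\ref{prop:DN-Hs} and~\ref{prop:DN-inverse} together with the algebra property, and the correct observation that $\mathcal N_f^\pm\bar{\mathcal N}_f^{-1}$ is only order $0$ (so the sources for $\mathfrak g_3,\mathfrak g_4$ must land in $H^{s-\f12}$, not $H^{s-\f32}$) all match the paper's route, and your treatment of $\mathfrak g_1,\mathfrak g_2$ is essentially the paper's argument.

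The treatment of $\mathfrak g_3,\mathfrak g_4$ contains a genuine error in the Sobolev bookkeeping which you then try to patch with an unjustified structural argument. You place $\mathrm{tr}(\nabla\vu^\mp\nabla\vu^\mp)$ in $H^{s-2}(\Omega_f^\mp)$ ``as a product of two $H^{s-1}$ functions, using Lemma~\ref{lem:basic}(3).'' But Lemma~\ref{lem:basic}(3) is precisely the algebra property: for $\sigma=s-1\ge 2$ (since $s\ge 3$), a product of two $H^{s-1}$ functions lies in $H^{s-1}$, not $H^{s-2}$. Consequently $\Delta p_{\vu^\mp,\vu^\mp}\in H^{s-1}(\Omega_f^\mp)$, elliptic regularity (Proposition~\ref{prop:elliptic-c}, adapted to the inhomogeneous Dirichlet/Neumann mixed problem) gives $\nabla p_{\vu^\mp,\vu^\mp}\in H^{s}(\Omega_f^\mp)$, and the trace $\underline{\nabla p_{\vu^\mp,\vu^\mp}}$ is already in $H^{s-\f12}(\T^2)$. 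Your subsequent discussion — the claim that ``the divergence-form structure of $\mathrm{tr}(\nabla\vu\nabla\vu)$'' yields extra normal-trace regularity, and the ``bootstrap ... gives half a derivative more'' — is an attempt to recover a derivative that was never actually lost; those assertions are not justified and are not what the paper uses, and as stated they still leave you short: $\nabla p\in H^{s-1}$ has trace only in $H^{s-\f32}$, and adding ``half a derivative more'' gets $H^{s-1}$, not $H^{s-\f12}$. Once the product rule is applied correctly, the estimate closes cleanly as in the paper: $\vN_f\cdot\underline{\nabla p}\in H^{s-\f12}$, $\bar{\mathcal N}_f^{-1}$ raises this to $H^{s+\f12}$, and $\mathcal N_f^\pm$ maps $H^{s+\f12}\to H^{s-\f12}$ at the endpoint $\sigma=s-\f12$ of Proposition~\ref{prop:DN-Hs}.
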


\begin{proof}
It follows from Proposition \ref{prop:DN-Hs}, Proposition \ref{prop:DN-inverse} and Lemma \ref{lem:basic} that
\begin{align*}
\|\mathfrak{g}_1\|_{H^{s-\f12}}\le& C(L_1)\left\|(\underline{u}_i^+\underline{u}^+_j-\underline{h}^+_i\underline{h}^+_j-\underline{u}^-_i\underline{u}^-_j+\underline{h}^-_i\underline{h}^-_j)\partial_i\partial_jf
\right\|_{H^{s-\f32}}\\
\le& C(L_1)\|(\underline{\vu}^\pm,\underline{\vh}^\pm)\|_{H^{s-\f32}}\|f\|_{H^{s+\f12}}\le C(L_1)
\end{align*}
as well as
\begin{align*}
\|\mathfrak{g}_2\|_{H^{s-\f12}}
\le& C(L_1)\|\underline{\vu}^\pm\|_{H^{s-\f32}}\|\theta\|_{H^{s-\f12}}\le C(L_1),
\end{align*}
and by Proposition \ref{prop:elliptic-c},
\begin{align*}
\|(\mathfrak{g}_3, \mathfrak{g}_4)\|_{H^{s-\f12}}
\le& C(L_1)\|\underline{\nabla(p_{\vum,\vum}-p_{\vhm,\vhm})}\|_{H^{s-\f12}}+C(L_1)\|\underline{\nabla(p_{\vup, \vup}-p_{\vhp, \vhp})}\|_{H^{s-\f12}}\\
\le& C(L_1)\big\|\nabla(p_{\vum,\vum}, p_{\vhm,\vhm})\big\|_{H^{s}(\Om_f^-)}
+\big\|\nabla(p_{\vup, \vup}, p_{\vhp, \vhp})\big\|_{H^{s}(\Om_f^+)}\\
\le& C(L_1)\|(\vu^\pm,\vh^\pm)\|_{H^s(\Om_f^\pm)}\le C(L_1).
\end{align*}
The proof is finished.
\end{proof}

\subsection{The linearized system of $(\vw, \vj)$}

For the vorticity system (\ref{eq:vorticity-w})-(\ref{eq:vorticity-h}),
we introduce the following linearized system:
\begin{align}\label{eq:vorticity-w-L}
&\partial_t\bar\vom^\pm+\vu^\pm\cdot\nabla\bar\vom^\pm-\vh^\pm\cdot\nabla\bar\vj^\pm
=\bar\vom^\pm\cdot\nabla\vu^\pm-\bar\vj^\pm\cdot\nabla\vh^\pm,\\\label{eq:vorticity-h-L}
&\partial_t\bar\vj^\pm+\vu^\pm\cdot\nabla\bar\vj^\pm-\vh^\pm\cdot\nabla\bar\vom^\pm
=\bar\vj^\pm\cdot\nabla\vu^\pm-\bar\vom^\pm\cdot\nabla\vh^\pm-2\nabla u_i^\pm\times\nabla h_i^\pm.
\end{align}

Let $\vvap^\pm=\bar\vom^\pm+\bar\vj^\pm$, which satisfies
\begin{align}\label{eq:vorticity-d1}
\partial_t\vvap^\pm+(\vu-\vh)^\pm\cdot\nabla\vvap^\pm=\vvap^\pm\cdot\nabla(\vh-\vu)^\pm-2\nabla u_i^\pm\times\nabla h_i^\pm.
\end{align}
We define the flow map $X^\pm(t,\cdot)$
\begin{align*}
\frac{dX^\pm(t,x)}{dt}=(\vu-\vh)^\pm(t,X^\pm(t,x)),\quad x\in \Omega^\pm_{f_0}.
\end{align*}
Thanks to (\ref{ass:boun}), $X^\pm(t,\cdot)$ is a map from $\Omega^\pm_{f_0}$ to $\Omega^\pm_{f(t)}$.
Then we have
\begin{align*}
\frac{d\vvap^\pm(t, X^\pm(t,x))}{dt}=\Big(\vvap^\pm\cdot\nabla(\vh-\vu)^\pm-
2\nabla u_l^\pm\times\nabla h_l^\pm\Big)(t,X^\pm(t,x)),\quad x\in \Omega^\pm_0.
\end{align*}
This is a linear ODE system, which admits a unique solution apparently.
$\bar\vom^\pm-\bar\vj^\pm$ can be solved in the same way. Moreover, we have the following estimate.

\begin{proposition}\label{prop:vorticity}
It holds that
\begin{align*}
\sup_{t\in[0,T]}&\Big(\|\bar\vom^\pm(t)\|^2_{H^{s-1}(\Omega_f^\pm)}+\|\bar\vj^\pm(t)\|^2_{H^{s-1}(\Omega_f^\pm)}\Big)\\
 &\qquad\le \Big(1+\|\bar\vom^\pm_0\|_{H^{s-1}(\Omega_0^\pm)}^2+\|\bar\vj^\pm_0\|_{H^{s-1}(\Omega_0^\pm)}^2\Big)e^{C(L_1)T}.
 \end{align*}
\end{proposition}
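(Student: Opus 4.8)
The plan is to decouple the linearized vorticity/current system (\ref{eq:vorticity-w-L})--(\ref{eq:vorticity-h-L}) into two transport equations via the Els\"asser-type combinations $\vvap^\pm=\bar\vom^\pm+\bar\vj^\pm$ and $\bar\vom^\pm-\bar\vj^\pm$, and then to run a Sobolev energy estimate \emph{directly on the moving domain} $\Omega_f^\pm(t)$. As recorded in (\ref{eq:vorticity-d1}), $\vvap^\pm$ solves
\begin{align*}
\partial_t\vvap^\pm+(\vu-\vh)^\pm\cdot\nabla\vvap^\pm=\vvap^\pm\cdot\nabla(\vh-\vu)^\pm-2\nabla u_i^\pm\times\nabla h_i^\pm,
\end{align*}
and the same algebra gives for $\bar\vom^\pm-\bar\vj^\pm$ the analogous equation with transport field $(\vu+\vh)^\pm$ and source $(\bar\vom^\pm-\bar\vj^\pm)\cdot\nabla(\vu+\vh)^\pm+2\nabla u_i^\pm\times\nabla h_i^\pm$. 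The two structural facts I would exploit, both immediate from (\ref{ass:boun}), are that each transport field $V^\pm:=(\vu\mp\vh)^\pm$ is divergence free in $\Omega_f^\pm$, is tangent to the fixed boundary $\Gamma^\pm$ (where $u_3^\pm=h_3^\pm=0$), and --- since $\vh^\pm\cdot\vN_f=0$ and $\partial_tf=\underline{\vu}^\pm\cdot\vN_f$ --- has the same normal component on $\Gamma_f$ as the velocity of the interface. Consequently the flow of $V^\pm$ carries $\Omega_f^\pm(t)$ onto itself, which is exactly the property already used above for the flow map $X^\pm$.

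First I would fix a multi-index $\alpha$ with $|\alpha|\le s-1$, apply $\partial^\alpha$ to the $\vvap^\pm$-equation, and differentiate $\int_{\Omega_f^\pm}|\partial^\alpha\vvap^\pm|^2\,\dx$ in time by the transport theorem. The moving-boundary contribution $\int_{\partial\Omega_f^\pm}|\partial^\alpha\vvap^\pm|^2\,V^\pm\!\cdot\vn\,\mathrm{d}S$ is cancelled exactly by the boundary term produced when $V^\pm\cdot\nabla\partial^\alpha\vvap^\pm$ is integrated by parts, and $\div V^\pm=0$ disposes of the remaining interior term, so only
\begin{align*}
\f{d}{dt}\big\|\partial^\alpha\vvap^\pm\big\|_{L^2(\Omega_f^\pm)}^2\le 2\big\|\partial^\alpha\vvap^\pm\big\|_{L^2}\Big(\big\|\partial^\alpha G^\pm\big\|_{L^2}+\big\|[\partial^\alpha,V^\pm\!\cdot\nabla]\vvap^\pm\big\|_{L^2}\Big)
\end{align*}
remains, where $G^\pm=\vvap^\pm\cdot\nabla(\vh-\vu)^\pm-2\nabla u_i^\pm\times\nabla h_i^\pm$ (this identity is first derived for smooth data and then passed to the limit). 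Since $s-1\ge2$, the third part of Lemma \ref{lem:basic} makes $H^{s-1}(\Omega_f^\pm)$ an algebra with constant $\le C(L_1)$, so $\|G^\pm\|_{H^{s-1}(\Omega_f^\pm)}\le C(L_1)\big(1+\|\vvap^\pm\|_{H^{s-1}(\Omega_f^\pm)}\big)$ thanks to $\|(\vu^\pm,\vh^\pm)\|_{H^s(\Omega_f^\pm)}\le L_1$. For the commutator I would expand by Leibniz into a sum of terms $(\partial^\beta V^\pm)\cdot\partial^\gamma\vvap^\pm$ with $|\beta|\ge1$, $|\gamma|\ge1$, $|\beta|+|\gamma|\le s$, and bound each in $L^2$ by $C\|V^\pm\|_{H^s}\|\vvap^\pm\|_{H^{s-1}}\le C(L_1)\|\vvap^\pm\|_{H^{s-1}}$ through Sobolev product / Moser estimates. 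Summing over $|\alpha|\le s-1$ then yields
\begin{align*}
\f{d}{dt}\big\|\vvap^\pm(t)\big\|_{H^{s-1}(\Omega_f^\pm)}^2\le C(L_1)\Big(1+\big\|\vvap^\pm(t)\big\|_{H^{s-1}(\Omega_f^\pm)}^2\Big).
\end{align*}

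Gronwall's inequality then gives $\|\vvap^\pm(t)\|_{H^{s-1}(\Omega_f^\pm)}^2\le\big(1+\|\vvap_0^\pm\|_{H^{s-1}(\Omega_0^\pm)}^2\big)e^{C(L_1)t}$, the same bound holds verbatim for $\bar\vom^\pm-\bar\vj^\pm$, and adding the two after writing $\bar\vom^\pm=\f12\big(\vvap^\pm+(\bar\vom^\pm-\bar\vj^\pm)\big)$, $\bar\vj^\pm=\f12\big(\vvap^\pm-(\bar\vom^\pm-\bar\vj^\pm)\big)$, together with $\|\vvap_0^\pm\|_{H^{s-1}}^2+\|\bar\vom_0^\pm-\bar\vj_0^\pm\|_{H^{s-1}}^2\le2\big(\|\bar\vom_0^\pm\|_{H^{s-1}}^2+\|\bar\vj_0^\pm\|_{H^{s-1}}^2\big)$, produces the stated inequality; existence and uniqueness of the solution itself are free, being those of the linear transport (ODE along $X^\pm$) system already noted above the statement. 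The hard part will be keeping every constant uniform --- depending only on $L_1$ and the fixed reference data --- which forces one to pass through $\Omega_*^\pm$ via Lemma \ref{lem:basic} at each product and embedding step, and, because $s=3$ is the borderline exponent, to argue the commutator and product bounds there by hand (e.g.\ using $H^2(\Omega_f^\pm)\hookrightarrow L^\infty$ and $H^1(\Omega_f^\pm)\hookrightarrow L^6$) rather than by a generic Moser estimate. The one structural point not to miss is the exact cancellation of the two boundary integrals, which rests entirely on the tangency of $V^\pm$ to $\partial\Omega_f^\pm$ encoded in (\ref{ass:boun}).
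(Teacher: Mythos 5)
Your proposal is correct and follows essentially the same route as the paper: same Els\"asser decomposition into $\vvap^\pm$ and $\bar\vom^\pm-\bar\vj^\pm$, same direct energy estimate in the moving domain via the transport theorem, same cancellation of the boundary flux term against the boundary term from integrating $V^\pm\cdot\nabla$ by parts (which hinges exactly on $\underline{\vh}^\pm\cdot\vN_f=0$ and $\partial_tf=\underline{\vu}^\pm\cdot\vN_f$ from (\ref{ass:boun})), same use of $\div V^\pm=0$ for the interior term, commutator/product estimates to absorb lower-order contributions into $C(L_1)$, and Gronwall to close.
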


\begin{proof}
Thanks to $\partial_tf=\underline{\vu}^\pm\cdot\vN_f$, we get by integration by parts that
\begin{align*}
&\frac12\frac{d}{dt}\int_{\Omega_f^\pm}|\nabla^{s-1}\vvap^\pm(t,x)|^2dx\\
&=\int_{\Omega_f^\pm}\nabla^{s-1}\vvap^\pm\cdot\nabla^{s-1}\partial_t\vvap^\pm dx+\frac12\int_{\Gamma_f}|\nabla^{s-1}\vvap^\pm|^2(\vu^\pm\cdot\vn)d\sigma.
\end{align*}
Using the equation (\ref{eq:vorticity-d1}) and $\underline{\vh}^\pm\cdot\vN_f=0$, we get by Lemma \ref{lem:basic} that
\begin{align*}
&\frac12\frac{d}{dt}\int_{\Omega_f^\pm}|\nabla^{s-1}\vvap^\pm(t,x)|^2dx\\
&\le\int_{\Omega_f^\pm}\nabla^{s-1}\vvap^\pm\cdot\nabla^{s-1}\big[(\vu-\vh)^\pm\cdot\nabla\vvap^\pm\big]dx
+\frac12\int_{\Gamma_f}|\nabla^{s-1}\vvap^\pm|^2(\vu^\pm\cdot\vn)d\sigma\\
&\qquad+C(L_1)\Big(1+\|\bar\vom^\pm(t)\|^2_{H^{s-1}(\Omega_f^\pm)}+\|\bar\vj^\pm(t)\|^2_{H^{s-1}(\Omega_f^\pm)}\Big)\\
&\le\frac12\int_{\Omega_f^\pm}(\vu-\vh)^\pm\cdot\nabla\big(|\nabla^{s-1}\vvap^\pm|^2\big)dx
+\frac12\int_{\Gamma_f}|\nabla^{s-1}\vvap^\pm|^2(\vu^\pm\cdot\vn)d\sigma\\
&\qquad+C(L_1)\Big(1+\|\bar\vom^\pm(t)\|^2_{H^{s-1}(\Omega_f^\pm)}+\|\bar\vj^\pm(t)\|^2_{H^{s-1}(\Omega_f^\pm)}\Big)\\
&=-\frac12\int_{\Omega_f^\pm}\div(\vu-\vh)^\pm|\nabla^{s-1}\vvap^\pm|^2dx\\
&\qquad+C(L_1)\Big(1+\|\bar\vom^\pm(t)\|^2_{H^{s-1}(\Omega_f^\pm)}+\|\bar\vj^\pm(t)\|^2_{H^{s-1}(\Omega_f^\pm)}\Big)\\
&\le C(L_1)\Big(1+\|\bar\vom^\pm(t)\|^2_{H^{s-1}(\Omega_f^\pm)}+\|\bar\vj^\pm(t)\|^2_{H^{s-1}(\Omega_f^\pm)}\Big).
\end{align*}

In a similar way, we can deduce that
\begin{align*}
\frac12\frac{d}{dt}\int_{\Omega_f^\pm}|\nabla^{s-1}(\bar\vom-\bar\vj)^\pm|^2dx
\le C(L_1)\Big(1+\|\bar\vom^\pm(t)\|^2_{H^{s-1}(\Omega_f^\pm)}+\|\bar\vj^\pm(t)\|^2_{H^{s-1}(\Omega_f^\pm)}\Big).
\end{align*}
Then the proposition follows from Gronwall's inequality.
\end{proof}

The solution of the system (\ref{eq:vorticity-w-L})-(\ref{eq:vorticity-h-L}) satisfies the following properties,
which are important compatibility conditions for solving the velocity and magnetic field from the vorticity and current.

\begin{lemma}\label{lem:com-vorticity}
It holds that
\beno
\frac{d}{dt}\int_{\Gamma^\pm}\bar\om^\pm_3dx'=0,\quad \frac{d}{dt}\int_{\Gamma^\pm}\bar\xi^\pm_3dx'=0.
\eeno
\end{lemma}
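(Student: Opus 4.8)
The plan is to track the third components of the linearized vorticity equations (\ref{eq:vorticity-w-L}) and (\ref{eq:vorticity-h-L}) on the fixed boundary $\Gamma^\pm=\BT\times\{\pm1\}$, where the boundary conditions $u_3^\pm=h_3^\pm=0$ from (\ref{ass:boun}) together with $\div\vu^\pm=\div\vh^\pm=0$ make the structure collapse.

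First I would take the $\ve_3$-component of (\ref{eq:vorticity-w-L}),
\[
\partial_t\bar\om_3^\pm+\vu^\pm\cdot\na\bar\om_3^\pm-\vh^\pm\cdot\na\bar\xi_3^\pm
=\bar\vom^\pm\cdot\na u_3^\pm-\bar\vj^\pm\cdot\na h_3^\pm,
\]
and restrict it to $\Gamma^\pm$. Since $u_3^\pm=0$ on $\Gamma^\pm$, the transport term there is purely tangential, $\vu^\pm\cdot\na\bar\om_3^\pm=u_1^\pm\pa_1\bar\om_3^\pm+u_2^\pm\pa_2\bar\om_3^\pm$; integrating over $\BT$, integrating by parts in $x'$, and using that the trace of $\div\vu^\pm=0$ gives $\pa_1u_1^\pm+\pa_2u_2^\pm=-\pa_3u_3^\pm$ on $\Gamma^\pm$, I get $\int_{\Gamma^\pm}\vu^\pm\cdot\na\bar\om_3^\pm\,dx'=\int_{\Gamma^\pm}(\pa_3u_3^\pm)\bar\om_3^\pm\,dx'$, and similarly $\int_{\Gamma^\pm}\vh^\pm\cdot\na\bar\xi_3^\pm\,dx'=\int_{\Gamma^\pm}(\pa_3h_3^\pm)\bar\xi_3^\pm\,dx'$. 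On the right-hand side, $u_3^\pm\equiv0$ along $\Gamma^\pm$ also forces $\pa_1u_3^\pm=\pa_2u_3^\pm=0$ there, so $\bar\vom^\pm\cdot\na u_3^\pm=\bar\om_3^\pm\pa_3u_3^\pm$ and $\bar\vj^\pm\cdot\na h_3^\pm=\bar\xi_3^\pm\pa_3h_3^\pm$ on $\Gamma^\pm$. Adding everything up, the $\pa_3u_3^\pm$ and $\pa_3h_3^\pm$ contributions cancel exactly, and $\frac{d}{dt}\int_{\Gamma^\pm}\bar\om_3^\pm\,dx'=0$.

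The argument for $\int_{\Gamma^\pm}\bar\xi_3^\pm\,dx'$ is identical once we deal with the extra term in (\ref{eq:vorticity-h-L}): taking the $\ve_3$-component produces $-2(\na u_i^\pm\times\na h_i^\pm)\cdot\ve_3$, and I would use the pointwise identity $(\na u_i\times\na h_i)\cdot\ve_3=\pa_1(u_i\pa_2h_i)-\pa_2(u_i\pa_1h_i)$ to recognize this, on $\Gamma^\pm$, as a tangential divergence in $x'$; hence its integral over $\BT$ vanishes by periodicity. All remaining terms have exactly the same form as in the vorticity equation and cancel in the same way, yielding $\frac{d}{dt}\int_{\Gamma^\pm}\bar\xi_3^\pm\,dx'=0$.

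There is no serious obstacle here: the whole proof is an elementary computation that hinges on the boundary conditions $u_3^\pm=h_3^\pm=0$ and the divergence-free constraints. The only points requiring care are the bookkeeping of the cancellation --- checking that the $\pa_3u_3^\pm$ (resp.\ $\pa_3h_3^\pm$) factor generated by integrating the tangential transport term by parts matches the one coming from the stretching term $\bar\vom^\pm\cdot\na u_3^\pm$ (resp.\ $\bar\vj^\pm\cdot\na h_3^\pm$) restricted to $\Gamma^\pm$ --- and, for the current equation, spotting that $\na u_i^\pm\times\na h_i^\pm$ is a curl so that its boundary contribution drops out.
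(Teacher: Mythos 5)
Your proof is correct and follows essentially the same route as the paper: restrict the third component of the linearized vorticity/current equations to the flat boundary $\Gamma^\pm$, use $u_3^\pm=h_3^\pm=0$ (and hence $\pa_1u_3^\pm=\pa_2u_3^\pm=0$, $\pa_1h_3^\pm=\pa_2h_3^\pm=0$) to reduce the transport to a tangential operator and the stretching terms to $\bar\om_3^\pm\pa_3u_3^\pm$ and $\bar\xi_3^\pm\pa_3h_3^\pm$, then integrate by parts in $x'$ and invoke $\div\vu^\pm=\div\vh^\pm=0$ to close the cancellation. The only cosmetic difference is the treatment of the extra source $-2\na u_i^\pm\times\na h_i^\pm$: the paper integrates by parts once and uses equality of mixed partials, while you rewrite its $\ve_3$-component as a tangential divergence $\pa_1(u_i\pa_2h_i)-\pa_2(u_i\pa_1h_i)$ and invoke periodicity, which is the same computation in different packaging.
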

\begin{proof}Since $\partial_iu_3^\pm=\partial_ih_3^\pm=0(i=1,2)$ on $\Gamma^\pm$, we get
\begin{align*}
\frac{d}{dt}  \int_{\Gamma^+}\bar\om^+_3dx'
=&\int_{\Gamma^+}\big(-u^+_1\partial_1\bar\om_3^+
-u^+_2\partial_2\bar\om_3^++\bar{\om}^+_3\partial_3u_3^+\big)dx'\nonumber\\
&+\int_{\Gamma^+}\big(h^+_1\partial_1\bar\xi_3^+
+h^+_2\partial_2\bar\xi_3^+-\bar{\xi}^+_3\partial_3h_3^+\big)dx'\nonumber\\
=&\int_{\Gamma^+}\big(\partial_1u^+_1
+\partial_2u^+_2+\partial_3u_3^+\big)\bar{\om}_3dx'\nonumber\\
&-\int_{\Gamma^+}\big(\partial_1h^+_1+\partial_2h^+_2+\partial_3h_3^+\big)\bar\xi_3^+dx'=0.
\end{align*}
Similarly, we have
\begin{align*}\nonumber
\frac{d}{dt}  \int_{\Gamma^+}\bar\xi^+_3dx'
=&-2\int_{\Gamma^+}\big(\partial_1u^+_i\partial_2h_i^+-\partial_2u^+_i\partial_1h_i^+\big)dx'\\
=&2\int_{\Gamma^+}\big(u^+_i\partial_1\partial_2h_i^+-u^+_i\partial_2\partial_1h_i^+\big)dx'=0.
\end{align*}
The proof for $\om^-_3, \xi^-_3$ is similar.
\end{proof}

\section{Construction of the iteration map}

Assume that
\begin{align*}
f_0\in H^{s+\f12}(\BT),\quad  \vu_0^\pm,\,\vh_0^\pm\in H^{s}(\Omega_{f_0}^\pm).
\end{align*}
Furthermore, assume that there exists $c_0>0$ so that
\begin{itemize}
\item[1.] $-(1-2c_0)\le f_0(x')\le (1-2c_0)$;

\item[2.] $\Lambda(\vh_0^\pm,[\vu_0])\ge 2c_0$.
\end{itemize}

We choose $f_*=f_0$ and take $\Omega_*^\pm=\Omega_{f_0}^\pm$ as the reference region.
The initial data $(f_I,(\partial_tf)_I,\vom_{*I}^\pm,\vj_{*I}^\pm,\beta_{Ii}^\pm,
\Gamma_{Ii}^\pm)$ for the equivalent system is defined as follows
\begin{align*}
&f_I=f_0,\quad (\partial_tf)_I=\vu_0^\pm(x',f_0(x'))\cdot(-\partial_1f_0,-\partial_2f_0,1),\\
&\vom_{*I}^\pm=\curl\vu_0^\pm,\quad \vj_{*I}^\pm=\curl\vh_0^\pm,\\
&\beta_{Ii}^\pm=\int_{\BT}u_{0i}^\pm(x',\pm1)dx',\quad \gamma_{Ii}^\pm=\int_{\BT}h_{0i}^\pm(x',\pm1)dx'.
\end{align*}
In addition, we choose a large constant $M_0>0$ so that
\begin{align}
\|f_I\|_{H^{s+\f12}}+\|(\vom_{I*}^\pm, \vj_{I*}^\pm)\|_{H^{s-1}(\Omega_*^\pm)}
+\|(\partial_tf)_I\|_{H^{s-\f12}}+|\beta^\pm_{Ii}|+|\gamma^\pm_{Ii}| \le M_0.
\end{align}

To construct the iteration map, we introduce the following functional space.

\begin{definition}\label{def:X}
Given two positive constants $M_1, M_2>0$ with $M_1>2M_0$, we define
the space $\mathcal{X}=\mathcal{X}(T, M_1, M_2)$ as the collection of
$(f, \vom_*^\pm, \vj_*^\pm, \beta^\pm_{i},\gamma^\pm_{i})$, which satisfies
\begin{align*}
&\left(f(0),\partial_tf(0), \vom_*^\pm(0), \vj_*^\pm(0),\beta^\pm_{i}(0),\gamma^\pm_{i}(0)\right)=\big(f_I, (\partial_t f)_I, \vom_{*I}^\pm, \vj_{*I}^\pm, \beta^\pm_{iI},\gamma^\pm_{iI}\big),\\
& \sup_{t\in[0,T]}
\|f(t,\cdot)-f_*\|_{H^{s-\f12}} \le \delta_0,\\
&\sup_{t\in[0,T]}\Big(\|f(t)\|_{H^{s+\f12}}+\|\partial_tf(t)\|_{H^{s-\f12}}
+\|(\vom_*^\pm, \vj_*^\pm)(t)\|_{H^{s-1}(\Omega_*^\pm)}+|\beta^\pm_{i}(t)|+|\gamma^\pm_{i}(t)|\Big)\le M_1,\\
&\sup_{t\in[0,T]}\Big(\|\partial_{t}^2f\|_{H^{s-\f32}}
+\|(\partial_t\vom_*, \partial_t\vj_*)\|_{H^{s-2}(\Omega_*^\pm)}+|\partial_t\beta^\pm_{i}|+|\partial_t\gamma^\pm_{i}|\Big)\le M_2,
\end{align*}
together with the condition $\int_{\T^2}\pa_tf(t,x')dx'=0$.

\end{definition}

Given $(f,\vom_*^\pm, \vj_*^\pm,\beta^\pm_{i},\gamma^\pm_{i})\in \mathcal{X}
(T, M_1, M_2)$, our goal is to construct an iteration map $(\bar{f},\bar{\vom}_*, \bar{\vj}_*,\bar{\beta}^\pm_{i},$
$\bar{\gamma}^\pm_{i})=\mathcal{F}\big(f,\vom_*^\pm, \vj_*^\pm, \beta^\pm_{i},\gamma^\pm_{i}\big)
\in\mathcal{X}(T, M_1, M_2)$ with suitably chosen constants $M_1, M_2$ and $T$.

\subsection{Recover the bulk region, velocity and magnetic field}
Recall that
\begin{align*}
\Om_f^{+}=\big\{ x \in \Om| x_3 > f (t,x')\big\}, \quad\Om_f^{-}=\big\{ x \in \Om| x_3 < f (t,x')\big\},
\end{align*}
and harmonic coordinate map $\Phi_f^\pm:\Omega_*^\pm\to\Omega^\pm_f$.


We denote
\begin{align*}
\tilde{\vom}^\pm\triangleq P_{f}^{\div} (\vom _*^\pm\circ\Phi_{f }^{-1}),\quad
\tilde{\vj}^\pm\triangleq P_{f}^{\div} (\vj _*^\pm\circ\Phi_{f }^{-1}),
\end{align*}
where $P_{f }^{\div}$ is an operator, which projects a vector field
$\Omega_{f}^\pm$ to its divergence-free part. More precisely,
$P_{f }^{\div}\vom^\pm=\vom^\pm-\nabla\phi^\pm$ with
\begin{align*}
\left\{
\begin{array}{l}
\Delta\phi^\pm=\div\vom^\pm\quad\text{in}\quad \Omega_f^\pm,\\
\partial_3\phi^\pm=0\quad\text{on}\quad \Gamma^\pm,\quad \phi^\pm=0\quad\text{on}\quad \Gamma_f.
\end{array}\right.
\end{align*}
Thus, $\div P_{f }^{\div}\vom^\pm=0$ and $\ve_3\cdot P_{f}^{\div}\vom^\pm=\om_3^\pm$ on $\Gamma^\pm$. Therefore,
$P_{f }^{\div}\vom^\pm$ satisfies conditions (C1) and (C2) on $\Omega_f^\pm$, and so does $P_{f }^{\div}\vj^\pm$.
Moreover, we have
\begin{align}
&\|(\tilde{\vom}^\pm, \tilde{\vj}^\pm)\|_{H^{s-1}(\Omega_f^\pm)}\le C(M_1),\label{eq:wh-est1}\\
&\|(\pa_t\tilde{\vom}^\pm, \pa_t\tilde{\vj}^\pm)\|_{H^{s-2}(\Omega_f^\pm)}\le C\big(M_1, M_2\big),\label{eq:wh-est2}
\end{align}
Then we can define the velocity field $\vu^\pm$ and magnetic field $\vh^\pm$ by solving the following div-curl system
\begin{equation}
\left\{\begin{split}
&\curl\vu^\pm=\tilde{\vom}^\pm,\quad\div\vu^\pm =0\quad  \text{in}\quad\Om_f^{\pm},\\
&\vu^\pm\cdot\vN_f=\partial_tf\quad\text{on}\quad\Gamma_{f}, \\
& \vu^\pm\cdot\ve_3 = 0,\quad\int_{\Gamma^\pm}u_i dx'=\beta^\pm_i (i=1,2)\quad\text{on}\quad\Gamma^{\pm},
\end{split}\right.
\end{equation}
and
\begin{equation}
\left\{\begin{split}
&\curl \vh^\pm=\tilde{\vj}^\pm,\quad\div\vh^\pm=0\quad\text{in}\quad\Om_f^{\pm},\\
&\vh^\pm\cdot\vN_f = 0\quad \text{on}\quad\Gamma_{f}, \\
& \vh^\pm\cdot\ve_3 = 0, \quad \int_{\Gamma^\pm} h_i dx'=\gamma^\pm_i (i=1,2)\quad\text{on}\quad\Gamma^{\pm}.
\end{split}\right.
\end{equation}
It follows from Proposition \ref{prop:div-curl} and (\ref{eq:wh-est1}) that
\begin{align}
\|\vu^\pm\|_{H^{s}(\Omega_f^\pm)}\le &C(M_1)\big(\|\tilde{\vom}^\pm\|_{H^{s-1}(\Omega_f^\pm)}+\|\partial_tf
\|_{H^{s-\f12}}+|\beta^\pm_1|+|\beta^\pm_2|\big)\le C(M_1),\\
\|\vh^\pm\|_{H^{s}(\Omega_f^\pm)}\le &C(M_1)\big(\|\tilde{\vj}^\pm\|_{H^{s-1}(\Omega_f^\pm)}+|\gamma^\pm_1|+|\gamma^\pm_2|\big)\le C(M_1).
\end{align}
In addition, there holds
\begin{align*}
\vu^\pm(0)=\vu_0^\pm,\quad \vh^\pm(0)=\vh_0^\pm.
\end{align*}

Using the fact that
\begin{align*}
\partial_t(\vu^\pm\cdot\vN_f)=\pa_t\vu^\pm\cdot\vN_f+\vu^\pm\cdot\partial_t\vN_f
=(\partial_t\vu^\pm+\partial_3\vu^\pm\partial_tf)\cdot\vN_f+\vu^\pm\cdot\partial_t\vN_f
\end{align*}
on $\Gamma_f$, it is easy to see that $\partial_t\vu^\pm$ satisfies
\begin{equation}\nonumber
\left\{\begin{split}
&\curl\partial_t\vu^\pm=\partial_t\tilde{\vom}^\pm,\quad\div\partial_t\vu^\pm=0\quad\text{in}\quad\Om_f^{\pm},\\
&\partial_t\vu^\pm\cdot\vN_f=\partial_{t}^2f-\partial_tf\partial_3\vu^\pm
\cdot\vN_f+u_1^\pm\partial_1\partial_tf+u_2^\pm\partial_2\partial_tf
\quad\text{on}\quad\Gamma_{f}, \\
&\partial_t\vu^\pm\cdot\ve_3=0, \quad \int_{\Gamma^\pm}\partial_tu_i^\pm dx=\partial_t\beta^\pm_i(i=1,2)\quad\text{on}\quad\Gamma^{\pm}.
\end{split}\right.
\end{equation}
By Proposition \ref{prop:div-curl} again and (\ref{eq:wh-est2}), we get
\begin{align*}
\|\partial_t\vu^\pm\|_{H^{s-1}(\Omega_f^\pm)}\le {C}(M_1, M_2),
\end{align*}
which implies
\begin{align}\nonumber
\!\!\!\|\vu^\pm(t)\|_{L^\infty(\Gamma_f)}\le& \|\vu^\pm_0\|_{L^\infty(\Gamma_{f_0})}+\int_0^t\|\partial_t\vu^\pm\|_{L^\infty(\Gamma_f)}dt\\
\le& \frac{M_0}{2}+T{C}(M_1,M_2).\nonumber
\end{align}
Similarly, we can obtain
\begin{align*}
&\|\partial_t\vh^\pm(t)\|_{H^{s-1}(\Omega_f^\pm)}\le {C}(M_1, M_2),\\
&\|\vh^\pm(t)\|_{L^\infty(\Gamma_f)}\le \frac{M_0}2+T{C}(M_1,M_2).\label{h-infty}
\end{align*}
Also, we have
\begin{align*}
&\|f(t)-f_0\|_{L^\infty}\le \|f(t)-f_0\|_{H^{s-\f12}}\le T\|\partial_tf\|_{H^{s-\f12}}\le TM_1, \\
&|\Lambda(\vh^\pm,[\vu])-\Lambda(\vh_0^\pm,[\vu_0])|\le TC\big(\|\partial_t\vu^\pm\|_{L^\infty(\Gamma_f)},
\|\partial_t\vh^\pm\|_{L^\infty(\Gamma_f)}\big)\le TC(M_1,M_2).
\end{align*}

Now, we choose $T$ small enough so that
\begin{align*}
TM_1\le \min\{\delta_0,c_0\},\quad TC(M_1)+T{C}(M_1,M_2)\le \frac {M_0}2,\quad TC(M_1, M_2)\le c_0,
\end{align*}
and take $L_0=M_0$, $L_1=M_1$, $L_2={C}(M_1, M_2)$. We conclude that for any $t\in [0,T]$,

\begin{itemize}
\item $-(1-c_0)\le f(t,x')\le (1-c_0)$
\item $\Lambda(\vh^\pm,[\vu])(t)\ge c_0$;
\item $\|(\vu^\pm, \vh^\pm)(t)\|_{L^{\infty}(\Gamma_f)}\le L_0$;
\item $\|f(t)-f_*\|_{H^{s-\f12}}\le \delta_0$;
\item $\|f(t)\|_{H^{s+\f12}}+\|\pa_tf(t)\|_{H^{s-\f12}}+\|\vu^\pm(t)\|_{H^{s}(\Omega_f^\pm)}
+\|\vh^\pm(t)\|_{H^{s}(\Omega_f^\pm)}\le L_1$;
\item $\|(\partial_t\vu^\pm, \partial_t\vh^\pm)(t)\|_{L^{\infty}(\Gamma_f)}\le L_2$.
\end{itemize}
%

\subsection{Define the iteration map}

Given $(f,\vu^\pm,\vh^\pm)$ as above, let us define the iteration map.
First of all, we solve $\bar f_1$ by the linearized system (\ref{sys:linear-H}) and $(\bar\vom^\pm, \bar\vj^\pm)$ by (\ref{eq:vorticity-w-L}) and
(\ref{eq:vorticity-h-L}) with the initial data
\begin{align*}
&\left(\bar f_1(0),\bar\theta(0), \bar\vom^\pm(0) , \bar\vj^\pm(0)\right)
=\big(f_0,(\partial_tf)_I, \vom_{*I}^\pm, \vj_{*I}^\pm\big).
\end{align*}

We define
\begin{align*}
&\bar\vom_{*}^\pm=\bar\vom^\pm\circ \Phi_{f}^\pm,\quad \bar\vj_{*}^\pm=\bar\vj^\pm\circ\Phi_{f}^\pm,\\
&\bar\beta^\pm_i(t)=\beta^\pm_i(0)-\int_0^t\int_{\Gamma^\pm}\big(u_j^\pm\partial_ju_i^\pm-h_j^\pm\partial_jh_i^\pm\big)dx'd\tau,\\
&\bar\gamma^\pm_i(t)=\gamma^\pm_i(0)-\int_0^t\int_{\Gamma^\pm}\big(u_j^\pm\partial_jh_i^\pm-h_j^\pm\partial_ju_i^\pm\big)dx'd\tau.
\end{align*}

The iteration map $\mathcal{F}$ is defined as follows
\ben
\mathcal{F}\big(f,\vom_*^\pm, \vj_*^\pm, \beta^\pm_{i},\gamma^\pm_{i}\big)
\eqdefa \big(\bar{f},\bar{\vom}_*^\pm, \bar{\vj}_*^\pm,\bar{\beta}^\pm_{i},
\bar{\gamma}^\pm_{i}\big),
\een
where $\bar f$ is given by
\ben
\bar f(t,x')=\bar f_1(t,x')-\langle \bar f_1\rangle+\langle f_0\rangle.
\een
Hence, $\langle \bar f\rangle=\langle f_0\rangle$ and $\int_{\T^2}\pa_t \bar f(t,x')dx'=0$ for $t\in [0,T]$.

\begin{proposition}\label{prop:iteration map}
There exist $M_1, M_2, T>0$ depending on $c_0, \delta_0, M_0$ so that $\mathcal{F}$ is a map from $\mathcal{X}(T, M_1,M_2)$ to itself.
\end{proposition}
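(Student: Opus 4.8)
The plan is to trace the estimates through the construction of $\mathcal F$ and then fix the constants in the order $M_1\to M_2\to T$, using that the constants produced by Propositions \ref{prop:f-L} and \ref{prop:vorticity} and by Lemma \ref{lem:basic} have the form $C(c_0,M_0)$ and are independent of $M_1,M_2,T$. Given $(f,\vom_*^\pm,\vj_*^\pm,\beta_i^\pm,\gamma_i^\pm)\in\mathcal X(T,M_1,M_2)$, the recovery of Section 8.1 produces $(\vu^\pm,\vh^\pm)$ and their time derivatives and shows that, for $T$ small (depending on $M_1,M_2$), the a priori assumptions of Section 7 hold with $L_0=M_0$, $L_1=M_1$, $L_2=C(M_1,M_2)$; in particular $\|(\vu^\pm,\vh^\pm)\|_{H^s(\Om_f^\pm)}\le C(M_1)$, $\|(\partial_t\vu^\pm,\partial_t\vh^\pm)\|_{H^{s-1}(\Om_f^\pm)}\le C(M_1,M_2)$, together with the boundary relations in \eqref{ass:boun}. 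Consequently the linear systems \eqref{sys:linear-H} and \eqref{eq:vorticity-w-L}--\eqref{eq:vorticity-h-L} are well posed, so $\mathcal F$ is well defined.

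I would then estimate the output. By Lemma \ref{lem:non-g}, $\|\mathfrak g\|_{L^\infty(0,T;H^{s-\f12})}\le C(M_1)$, so Proposition \ref{prop:f-L} with data $(\bar\theta_0,\bar f_0)=((\partial_tf)_I,f_0)$ gives
\[
\sup_{[0,T]}\big(\|\partial_t\bar f\|_{H^{s-\f12}}^2+\|\bar f\|_{H^{s+\f12}}^2\big)\le C(c_0,M_0)\big(M_0^2+TC(M_1)\big)e^{C(c_0,M_1,M_2)T},
\]
while the mean correction $\bar f=\bar f_1-\langle\bar f_1\rangle+\langle f_0\rangle$ changes only the zero Fourier mode (so these norms are unaffected), forces $\int_{\BT}\partial_t\bar f\,dx'=0$, and yields $\bar f(0)=f_0$, $\partial_t\bar f(0)=(\partial_tf)_I$ (using $\int_{\BT}(\partial_tf)_I\,dx'=0$, which follows from $\div\vu_0^\pm=0$ and $u_{0,3}^\pm|_{\Gamma^\pm}=0$). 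Proposition \ref{prop:vorticity} gives $\sup_{[0,T]}\|(\bar\vom^\pm,\bar\vj^\pm)\|_{H^{s-1}(\Om_f^\pm)}^2\le(1+2M_0^2)e^{C(M_1)T}$, which Lemma \ref{lem:basic} transfers, up to a factor $C(c_0,M_0)$, to $\bar\vom_*^\pm=\bar\vom^\pm\circ\Phi_f^\pm$, $\bar\vj_*^\pm=\bar\vj^\pm\circ\Phi_f^\pm$; since $\Phi_f^\pm$ is the identity at $t=0$ and on $\Gamma^\pm$, the initial data match $\vom_{*I}^\pm,\vj_{*I}^\pm$ and, via Lemma \ref{lem:com-vorticity}, the conditions $\int_{\Gamma^\pm}(\vom_*^\pm)_3=\int_{\Gamma^\pm}(\vj_*^\pm)_3=0$ needed by the next recovery step persist. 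From the defining integrals and trace bounds, $|\bar\beta_i^\pm(t)|+|\bar\gamma_i^\pm(t)|\le M_0+TC(M_1)$ with the correct initial values. For the second-order bounds: reading $\partial_t^2\bar f$ off the second line of \eqref{sys:linear-H} (products of the $H^{s-\f12}$ traces $\underline u_i^\pm$ with $\partial_i\partial_t\bar f,\partial_i\partial_j\bar f\in H^{s-\f32}$, plus $\mathfrak g$) and using the $M_1$-bound on $(\bar f,\partial_t\bar f)$ gives $\|\partial_t^2\bar f\|_{H^{s-\f32}}\le C(M_1)$; differentiating $\bar\vom_*^\pm=\bar\vom^\pm\circ\Phi_f^\pm$ in time, inserting \eqref{eq:vorticity-w-L} for $\partial_t\bar\vom^\pm\in H^{s-2}$, and bounding $\partial_t\Phi_f^\pm$ (the harmonic extension of the $(0,0,\partial_tf)$ datum) via the elliptic theory of Section 2 gives $\|(\partial_t\bar\vom_*^\pm,\partial_t\bar\vj_*^\pm)\|_{H^{s-2}(\Om_*^\pm)}\le C(M_1)$; and $|\partial_t\bar\beta_i^\pm|+|\partial_t\bar\gamma_i^\pm|\le C(M_1)$ at once. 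These last three constants depend on $M_1$ only.

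To close the estimates, choose $M_1=M_1(M_0,c_0)$ larger than twice each of $\big(C(c_0,M_0)M_0^2\big)^{1/2}$, $\big(C(c_0,M_0)(1+2M_0^2)\big)^{1/2}$, $2M_0$, and larger than $2M_0$ itself --- admissible since none of these depend on $M_1$. Then set $M_2:=2\big(C(M_1)+C(M_1)+C(M_1)\big)$, the sum of the three constants from the second-order bounds, so $M_2$ depends only on $M_1$, hence on $M_0,c_0$. Finally choose $T=T(M_0,c_0,\delta_0)>0$ so small that the smallness requirements of Section 8.1 are met, $e^{C(c_0,M_1,M_2)T}\le 2$ and $TC(M_1)\le M_0^2$ (so the $\bar f$- and vorticity $M_1$-bounds close to $\le(M_1/2)^2$), $TC(M_1)\le M_0$ (so the $\beta,\gamma$ bound closes), and $TM_1\le\delta_0$ (so $\|\bar f(t)-f_*\|_{H^{s-\f12}}\le\int_0^t\|\partial_t\bar f\|_{H^{s-\f12}}\,d\tau\le TM_1\le\delta_0$). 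With these choices every inequality of Definition \ref{def:X} holds for the output, along with the initial-data identities and $\int_{\BT}\partial_t\bar f\,dx'=0$, so $\mathcal F$ maps $\mathcal X(T,M_1,M_2)$ into itself.

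The real work here is not any single inequality but the bookkeeping of parameter dependence: one must verify that the quantities defining $M_1$ use only constants of the form $C(c_0,M_0)$ (independent of $M_1$), that $M_2$ is pinned down purely by $M_1$, and that $T$ is shrunk only after $M_1$ and $M_2$ are frozen --- otherwise the argument is circular. The single most delicate estimate itself is the one for $\partial_t\bar\vom_*^\pm$, where one differentiates the composition with the time-dependent harmonic map $\Phi_f^\pm$ and must control $\partial_t\Phi_f^\pm$ through the elliptic bounds of Section 2, the derivative loss from the transport equation \eqref{eq:vorticity-w-L} being absorbed exactly by working one order lower, in $H^{s-2}$.
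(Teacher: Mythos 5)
Your proof is correct and follows essentially the same route as the paper's: estimate the output using Propositions \ref{prop:f-L} and \ref{prop:vorticity} plus Lemma \ref{lem:non-g}, read the second-order time-derivative bounds directly off the linearized equations, and then freeze the constants in the order $M_1\to M_2\to T$. You supply several details the paper elides --- the verification that $\int_{\BT}(\partial_t f)_I\,dx'=0$ so the mean-correction preserves the initial data, the observation via Lemma \ref{lem:com-vorticity} that the compatibility conditions $\int_{\Gamma^\pm}(\bar\om_*^\pm)_3\,dx'=\int_{\Gamma^\pm}(\bar\xi_*^\pm)_3\,dx'=0$ propagate (needed so the next recovery step is solvable), and the explicit chain rule and elliptic bound on $\partial_t\Phi_f^\pm$ behind $\|\partial_t\bar\vom_*^\pm\|_{H^{s-2}}\le C(M_1)$ --- but these are elaborations of the same argument, not a different one.

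One small wrinkle in the bookkeeping worth flagging: when you say ``using the $M_1$-bound on $(\bar f,\partial_t\bar f)$'' to get $\|\partial_t^2\bar f\|_{H^{s-3/2}}\le C(M_1)$, that $M_1$-bound is not yet available --- what Proposition \ref{prop:f-L} gives at that stage is the expression $C(c_0,M_0)(M_0^2+TC(M_1))e^{C(c_0,M_1,M_2)T}$, which still carries an $M_2$-dependence. The circle is broken because the exponential is first capped by (say) $e$ under a preliminary constraint $T\le 1/C(M_1,M_2)$; the resulting bound on $\partial_t^2\bar f$ then depends only on $M_1,c_0,M_0$, so $M_2$ can be set, and finally $T$ is shrunk to make the constraint self-consistent. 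Your ordering of choices implicitly implements this, and your final sentence about choosing $T$ after $M_1,M_2$ are frozen is exactly right, but the phrase ``the $M_1$-bound'' slightly anticipates a conclusion that is only justified once $T$ is fixed.
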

\begin{proof}
We check the conditions in Definition \ref{def:X}.
The initial conditions is automatically satisfied. Proposition \ref{prop:f-L} and Proposition \ref{prop:vorticity}
ensure that
\begin{align*}\nonumber
&\sup_{t\in[0,T]}\left( \|\bar{f}(t)\|_{H^{s+\f12}}+\|\partial_t\bar{f}(t)\|_{H^{s-\f12}}+\|\bar\vom_*^\pm(t)\|_{H^{s-1}
(\Omega_*^\pm)}+\|\bar\vj_*(t)\|_{H^{s-1}(\Omega_*^\pm)}\right)\\
&\le C(c_0,M_0)e^{C(M_1,M_2)T}.
\end{align*}
From the equation (\ref{sys:linear-H}), (\ref{eq:vorticity-w-L}) and (\ref{eq:vorticity-h-L}), we deduce that
\beno
\sup_{t\in[0,T]}\Big(\|\partial_{t}^2\bar f\|_{H^{s-\f32}}
+\|(\partial_t\vom_*, \partial_t\vj_*)\|_{H^{s-2}(\Omega_*^\pm)}\Big)\le C(M_1).
\eeno
Obviously, we have
\beno
&&|\bar\beta^\pm_i(t)|+|\bar\gamma^\pm_i(t)|\le M_0+TC(M_1),\\
&&|\partial_t\bar\beta^\pm_i(t)|+|\partial_t\bar\gamma^\pm_i(t)|\le C(M_1),\\
&&\|\bar f(t)-f_*\|_{H^{s-\f12}}\le \int_0^t\|\pa_t \bar f(\tau)\|_{H^{s-\f12}}d\tau.
\eeno

We first take $M_2=C(M_1)$, and then take $M_1$ large enough so that
\begin{align}
C(c_0,M_0)<M_1/2.
\end{align}
Next, we take $T$ sufficiently small depending only on $c_0, \delta_0, M_0$ so that all other conditions in Definition \ref{def:X} are satisfied.
\end{proof}

\section{Contraction of the iteration map}

\subsection{Contraction}

Let $\big(f^A, \vom_*^{\pm A}, \vj_*^{\pm A},\beta^{\pm A}_{i}, \gamma^{\pm A}_{i}\big)$ and
$\big(f^B$, $\vom_*^{\pm B}, \vj_*^{\pm B}, \beta^{\pm B}_{i}, \gamma^{\pm B}_{i}\big)$
be two elements in $\mathcal{X}(T, M_1,M_2)$, and
$\big(\bar f^C,\bar \vom_*^{\pm C}, \bar\vj_*^{\pm C}, \bar\beta^{\pm C}_{i},\bar\gamma^{\pm C}_{i}\big)=\mathcal{F}
\big(f^C$, $\vom_*^{\pm C}, \vj_*^{\pm C},\beta^{\pm C}_{i}, \gamma^{\pm C}_{i}\big)$ for $C=A,B$.

We denote by $g^D$ the difference $g^A-g^B$. For example,
$f^D={f}^A-{f}^B, \bar\vom_*^{\pm D}=\bar\vom_*^{\pm A}-\bar\vom_*^{\pm B}$.

\begin{proposition}\label{prop:contraction}
There exists $T>0$ depending on $c_0, \delta_0, M_0$ so that
\begin{align}\nonumber
&\bar E^D\triangleq\sup_{t\in[0,T]}\Big(\|\bar{f}^D(t)\|_{H^{s-\f12}}+\|\partial_t\bar{f}^D(t)\|_{H^{s-\f32}}+\|\bar\vom_*^{\pm D}(t)\|_{H^{s-2}(\Omega_*^\pm)}
\\&\qquad\qquad+\|\vj_*^{\pm D}(t)\|_{H^{s-2}(\Omega_*^\pm)}+|\bar\beta^{\pm D}_{i}(t)|+|\bar\gamma^{\pm D}_{i}(t)|\Big)\nonumber\\\nonumber
&\le\frac12\sup_{t\in[0,T]}\Big( \|{f}^D(t)\|_{H^{s-\f12}}+\|\partial_t{f}^D(t)\|_{H^{s-\f32}}
+\|\vom_*^{\pm D}(t)\|_{H^{s-2}(\Omega_*^\pm)}\\&\qquad\qquad+\|\vj_*^{\pm D}(t)\|_{H^{s-2}(\Omega_*^\pm)}
+|\beta^{\pm D}_{i}(t)|+|\gamma^{\pm D}_{i}(t)|\Big)\triangleq E^D.\nonumber
\end{align}
\end{proposition}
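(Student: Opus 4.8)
The plan is to establish the contraction estimate by differencing the linearized systems that define the iteration map, and then running the same energy arguments used in Section~7 to control the difference. The underlying principle is that every estimate proved for a single element of $\mathcal{X}$ carries over to the difference of two elements, but with one crucial change of bookkeeping: differences are measured in Sobolev spaces that are \emph{one derivative lower} than the spaces in which the individual quantities are bounded (so $H^{s-\f12}$ for $f^D$ rather than $H^{s+\f12}$, $H^{s-2}$ for the vorticities rather than $H^{s-1}$, etc.), and the resulting constants depend only on $c_0,\delta_0,M_0$ and on the a~priori bounds $M_1,M_2$. Because the final factor in every such estimate is of the form $e^{C(M_1,M_2)T}$ or, better, a bound of the form $TC(M_1,M_2)\cdot E^D$ coming from integrating in time, the contraction factor $\tfrac12$ is obtained simply by choosing $T$ small.

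The key steps, in order, are as follows. First I would write the equations satisfied by $f^D=\bar f_1^A-\bar f_1^B$. Subtracting the two copies of \eqref{sys:linear-H}, the difference $f^D$ satisfies a second-order equation of the same form, $D_t^2 f^D = \tfrac12\sum_{i,j}(-2v_iv_j+\underline h_i^+\underline h_j^++\underline h_i^-\underline h_j^-)\pa_i\pa_j f^D + \text{(l.o.t.)} + \mathcal{R}^D$, where $D_t$ is built from the $A$-coefficients, the lower-order terms are linear in $(f^D,\pa_tf^D)$ with coefficients bounded by $C(M_1)$, and $\mathcal{R}^D$ collects all the terms in which a coefficient difference (e.g. $\underline{\vu}^{\pm A}-\underline{\vu}^{\pm B}$, or $(\mathcal{N}_{f^A}-\mathcal{N}_{f^B})(\cdots)$, or $\mathfrak{g}^A-\mathfrak{g}^B$) multiplies an $A$- or $B$-quantity. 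The term $\mathcal{R}^D$ is the genuine source: it must be estimated in $H^{s-\f32}$ by $C(M_1)$ times the differences of $(f,\vu^\pm,\vh^\pm)$, using the Sobolev and Lipschitz-in-coefficient estimates for the DN operator and its inverse (Proposition~\ref{prop:DN-Hs}, Proposition~\ref{prop:DN-inverse}, and the standard variation estimate $\|(\mathcal{N}^\pm_{f_1}-\mathcal{N}^\pm_{f_2})\psi\|_{H^\sigma}\le K\|f_1-f_2\|_{H^{\sigma+1}}\|\psi\|_{H^{\sigma+1}}$, which follows from the paralinearization formula \eqref{eq:DN-para-m}), the div-curl estimate Proposition~\ref{prop:div-curl}, and the product rule of Lemma~\ref{lem:basic}. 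Second, I would run the energy estimate of Proposition~\ref{prop:f-L} verbatim on $f^D$, but with the energy functional $E_{s-1}$ in place of $E_s$; this yields
\begin{align*}
\sup_{t\in[0,T]}\big(\|f^D\|_{H^{s-\f12}}^2+\|\pa_tf^D\|_{H^{s-\f32}}^2\big)\le C(c_0,M_0)e^{C(M_1,M_2)T}\!\int_0^T\|\mathcal{R}^D(\tau)\|_{H^{s-\f32}}^2 d\tau,
\end{align*}
there being no contribution from the initial data because $f^A$ and $f^B$ have the same initial data. Third, the difference of the ODE systems \eqref{eq:vorticity-w-L}--\eqref{eq:vorticity-h-L} for $\vvap^{\pm D}=\bar\vom^{\pm D}+\bar\vj^{\pm D}$ (and for $\bar\vom^{\pm D}-\bar\vj^{\pm D}$) is again transported along a flow map; running the $H^{s-2}$ energy estimate of Proposition~\ref{prop:vorticity} on the difference, with the extra source terms produced by coefficient differences estimated by $C(M_1)E^D$, gives $\sup_{[0,T]}\|(\bar\vom^{\pm D},\bar\vj^{\pm D})(t)\|_{H^{s-2}(\Omega_{f^A}^\pm)}^2\le TC(M_1)(E^D)^2$ after Gronwall; one must also account for the difference of domains $\Omega_{f^A}^\pm$ versus $\Omega_{f^B}^\pm$ and the difference of harmonic maps $\Phi_{f^A}^\pm$ versus $\Phi_{f^B}^\pm$ when pulling back to $\Omega_*^\pm$, which costs another factor $\|f^D\|_{H^{s-\f12}}\le E^D$ times $C(M_1)$ via Lemma~\ref{lem:basic}. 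Finally the differences $\bar\beta_i^{\pm D}$, $\bar\gamma_i^{\pm D}$ are given by time integrals of boundary quantities that are quadratic in $(\vu^\pm,\vh^\pm)$; differencing and using the $H^s$ bounds on the individual fields together with trace inequalities gives $|\bar\beta_i^{\pm D}(t)|+|\bar\gamma_i^{\pm D}(t)|\le TC(M_1)E^D$.

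Collecting the four estimates, each right-hand side is bounded by $C(M_1,M_2)\big(T+T e^{C(M_1,M_2)T}\big)^{1/2}$ — or, after absorbing, simply $C(M_1,M_2)\sqrt{T}$ — times $E^D$, so choosing $T$ small (depending only on $c_0,\delta_0,M_0$, since $M_1,M_2$ were already fixed in terms of these in Proposition~\ref{prop:iteration map}) makes the prefactor at most $\tfrac12$, which is exactly the claimed contraction. The main obstacle, and the part requiring real care rather than routine repetition, is the estimate of the source $\mathcal{R}^D$ in the $f$-equation: one must track how differences of the free surface propagate through the Dirichlet--Neumann operators $\mathcal{N}^\pm_f$, through $\widetilde{\mathcal{N}}_f^{-1}$, and through the elliptic pressure correctors $p_{\vupm,\vupm}$, $p_{\vhpm,\vhpm}$, verifying at each stage that the difference costs exactly one derivative and no more — so that the loss is consistent with measuring $f^D$ in $H^{s-\f12}$ and with the fact that $s\ge3$ leaves enough regularity for the product and paraproduct estimates to close. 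A secondary subtlety is that the two iteration maps use \emph{different} harmonic coordinate systems $\Phi_{f^A}^\pm$ and $\Phi_{f^B}^\pm$ built over the common reference $\Omega_*^\pm=\Omega_{f_0}^\pm$; comparing the pulled-back vorticities $\bar\vom_*^{\pm D}$ therefore requires a Lipschitz-in-$f$ estimate for the harmonic extension map, which again follows from the elliptic estimates of the appendix together with Lemma~\ref{lem:basic}, at the cost of one derivative.
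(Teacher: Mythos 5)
Your proposal is correct and matches the paper's proof in every essential respect: you difference the linearized $(\bar f,\bar\theta)$ and vorticity systems, pull the fields back to the fixed reference domain $\Omega_*^\pm$ to compare quantities defined on different fluid domains (exactly the paper's introduction of $\vu_*^{\pm C}=\vu^{\pm C}\circ\Phi_{f^C}^\pm$ and the pulled-back $\curl_C,\div_C$ operators), estimate the source $\mathcal{R}^D=\mathfrak{R}$ one derivative below the base regularity, and rerun the hyperbolic energy functional of Proposition~\ref{prop:f-L} at level $s-1$ together with the transport estimate of Proposition~\ref{prop:vorticity}, closing by Gronwall and smallness of $T$ since the prefactor is $C(e^{CT}-1+T)$. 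The only stylistic difference is that you spell out the Lipschitz-in-$f$ variation estimate for the DN operator and for the harmonic coordinate map explicitly, whereas the paper condenses these into the assertion $\|\mathfrak{R}\|_{H^{s-3/2}}\le CE^D$; both amount to the same bookkeeping.
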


\begin{proof}
By the elliptic estimate, we have
\begin{align*}
\|\Phi_{f^A}^\pm-\Phi_{f^B}^\pm\|_{H^{s-1}(\Om_*^\pm)}\le C(M_1)\|f^A-f^B\|_{H^{s-\f12}}\le CE^D.
\end{align*}
Since $\vu^A$ and $\vu^B$ are defined on different region, we can not estimate their difference directly.
Thus, we introduce for $C=A,B$
$$
\vu^{\pm C}_*=\vu^{\pm C}\circ\Phi_{f^C}^{\pm},\quad
\vh^{\pm C}_*=\vh^{\pm C}\circ\Phi_{f^C}^{\pm}.
$$

Let us first show that
\begin{align}\label{eq:uh-d}
\|\vu^{\pm D}_*\|_{H^{s-1}(\Om^\pm_*)}+\|\vh^{\pm D}_*\|_{H^{s-1}(\Om^\pm_*)}\le CE^D.
\end{align}
For a vector field $\vv_*^\pm$ defined on $\Omega_*^\pm$, we define
\begin{align*}
\curl_C \vv_*^\pm=\big(\curl (\vv_*^\pm\circ(\Phi^{\pm}_{f^C})^{-1})\big) \circ\Phi_{f^C}^\pm,\quad
\div_C \vv_*^\pm =\big(\div(\vv_*^\pm\circ(\Phi^{\pm}_{f^C})^{-1}\big) \circ\Phi_{f^C}^\pm,
\end{align*}
for $C=A,B$. Then we find that for $C=A,B$,
\begin{equation}\nonumber
\left\{\begin{split}
&\curl_C \vu^{\pm C}_*=\tilde{\vom}^{\pm C}_*\quad\text{in}\quad\Om^{\pm}_*,\\
&\div_C \vu_*^{\pm C}=0\quad\text{in}\quad\Om^{\pm}_*,\\
&\vu^{\pm C}_*\cdot\vN_{f^C}=\partial_tf^C\quad\text{on}\quad\Gamma_{*},\\
&\vu^{\pm C}\cdot\ve_3 = 0,\quad \int_{\Gamma^\pm} u_i^{\pm C}dx'=\beta_i^{\pm C}\quad\text{on}\quad\Gamma^{\pm}.
\end{split}\right.
\end{equation}
Thus, we have
\begin{equation}\nonumber
\left\{\begin{split}
&\curl_A\vu^{\pm D}_*=\tilde{\vom}^{\pm D}_*+(\curl_B-\curl_A)\vu^{\pm B}_*\quad\text{in}\quad \Om^{\pm}_*,\\
&\div_A\vu^{\pm D}_*=(\div_B-\div_A)\vu^{\pm B}_*\quad\text{in}\quad\Om^{\pm}_*,\\
&\vu^{\pm D}_*\cdot\vN_{f^A}=\partial_tf^D+\vu^{\pm B}_*\cdot(\vN_{f^B}-\vN_{f^A})\quad\text{on}\quad\Gamma_{*}, \\
&\vu^{\pm D}_*\cdot\ve_3=0,\quad \int_{\Gamma^\pm}u_i^{\pm D}dx'=\beta_i^{\pm D}\quad \text{on}\quad\Gamma^{\pm}.
\end{split}\right.
\end{equation}
It is easy to show that
\begin{align*}
\|(\curl_B-\curl_A)\vu^{\pm B}_*\|_{H^{s-2}(\Omega_*^\pm)}\le&~ C\|\Phi_{f^A}^\pm-\Phi_{f^B}^\pm\|_{H^{s-1}(\Omega_*^\pm)}\\
\le& C\|f^D\|_{H^{s-\f12}}\le CE^D.
\end{align*}
Similarly, we have
\begin{align*}
&\|(\div_B-\div_A)\vu^{\pm B}_*\|_{H^{s-2}(\Omega_*^\pm)}\le CE^D,\\
&\|\vu^{\pm B}_*\cdot(\vN_{f^B}-\vN_{f^B})\|_{H^{s-\f32}}\le CE^D.
\end{align*}
Then we infer from Proposition \ref{prop:div-curl} that
\begin{align}\nonumber
\|\vu^{\pm D}_*\|_{H^{s-1}(\Om_*^\pm)}\le C\left(\|\tilde{\vom}^{\pm D}_*\|_{H^{s-2}(\Omega_*^\pm)}
+\|\partial_tf^D\|_{H^{s-1}}+E^D\right)\le CE^D.
\end{align}
Similarly, we have
\begin{align}\nonumber
\|\vh^{\pm D}_*\|_{H^{s-1}}\le& CE^D.
\end{align}

We have
\begin{align*}
  \partial_t\bar{f}_1^D=&~\bar\theta^D,\\\nonumber
\partial_t\bar{\theta}^D=&-\left((\underline{u}^{+A}_1+\underline{u}^{-A}_1)\partial_1\bar{\theta}^D
+(\underline{u}^{+A}_2+\underline{u}^{-A}_2)\partial_2\bar\theta^D\right)\\
&-\frac12\sum_{i,j=1,2}(\underline{u}_i^{+A}\underline{u}^{+A}_j-\underline{h}^{+A}_i\underline{h}^{+A}_j+\underline{u}^{-A}_i\underline{u}^{-A}_j
-\underline{h}^{-A}_i\underline{h}^{-A}_j)
\partial_i\partial_j\bar{f}_1^D+\mathfrak{R},
\label{linear:cauchy:eq-2}
\end{align*}
where
\begin{align}\nonumber
\mathfrak{R}= &-\left((\underline{u}^{+D}_1+\underline{u}^{-D}_1)\partial_1\bar{\theta}^B
+(\underline{u}^{+D}_2+\underline{u}^{-D}_2)\partial_2\bar\theta^B\right)\\\nonumber
&+\frac12\sum_{i,j=1,2}\Big((\underline{u}_i^{+A}\underline{u}^{+A}_j-\underline{h}^{+A}_i\underline{h}^{+A}_j+\underline{u}^{-A}_i\underline{u}^{-A}_j
-\underline{h}^{-A}_i\underline{h}^{-A}_j)\\\nonumber
&\qquad\qquad-(\underline{u}_i^{+B}\underline{u}^{+B}_j-\underline{h}^{+B}_i\underline{h}^{+B}_j+\underline{u}^{-B}_i\underline{u}^{-B}_j
-\underline{h}^{-B}_i\underline{h}^{-B}_j)\Big)\partial_i\partial_j\bar{f}_1^B\\
&+\mathfrak{g}^A-\mathfrak{g}^B,\nonumber
\end{align}
and for $C=A,B, $
\begin{align*}
\mathfrak{g}^C=&~\frac12(\mathcal{N}_{f^C}^+-\mathcal{N}_{f^C}^-)\widetilde{\mathcal{N}}_{f^C}^{-1}
\sum_{i,j=1,2}(\underline{u}_i^{+C}\underline{u}^{+C}_j-\underline{h}^{+C}_i\underline{h}^{+C}_j-\underline{u}^{-C}_i\underline{u}^{-C}_j
+\underline{h}^{-C}_i\underline{h}^{-C}_j)\partial_i\partial_j{f^C}\\
&~+(\mathcal{N}_{f^C}^+-\mathcal{N}_{f^C}^-)\widetilde{\mathcal{N}}_{f^C}^{-1}
\left((\underline{u}^{+C}_1-\underline{u}^{-C}_1)\partial_1\theta^C
+(\underline{u}^{+C}_2-\underline{u}^{-C}_2)\partial_2\theta^C\right)\\
&~-\mathcal{N}_{f^C}^+\widetilde{\mathcal{N}}_{f^C}^{-1}\left(\vN_{f^C}\cdot
\underline{\nabla(p_{\vu^{-C},\vu^{-C}}-p_{\vh^{-C},\vh^{-C}})}\right)
\\&~-\mathcal{N}_{f^C}^-\widetilde{\mathcal{N}}_{f^C}^{-1}\left(\vN_{f^C}
\cdot\underline{\nabla(p_{\vu^{+C}, \vu^{+C}}-p_{\vh^{+C}, \vh^{+C}})}\right).
\end{align*}
Here $\underline{v}^C(x_1,x_2)$ is interpreted as $v(x_1,x_2, f^C(x_1,x_2))$ for $v=u_i^\pm, h_i^\pm$.

It is easy to verify that
\beno
\|\mathfrak{R}\|_{H^{s-\f32}}\le CE^D.
\eeno
We denote
\begin{align}
\bar F_{s}^D(\partial_t\bar{f}_1^D,\bar{f}_1^D)\triangleq&\big\|(\partial_t+w^A_i\partial_i)\langle \na\rangle^{s-\f32}\bar{f}_1^D\big\|_{L^2}^2
-\big\|v_i^A\partial_i\langle \na\rangle^{s-\f32}\bar{f}_1^D\big\|_{L^2}^2\\
&+\frac12\big\|h^{+A}_i\partial_i\langle \na\rangle^{s-\f32}\bar{f}_1^D\big\|_{L^2}^2
+\frac12\big\|h^{-A}_i\partial_i\langle \na\rangle^{s-\f32}\bar{f}_1^D\big\|_{L^2}^2.\nonumber
\end{align}
Then a similar proof of Proposition \ref{prop:f-L} gives
\begin{align*}
\frac{d}{dt}\Big(\bar F_{s}^D(\partial_t\bar{f}_1^D,\bar{f}_1^D)+\|\bar{f}_1^D\|_{L^2}^2+\|\partial_t\bar{f}_1^D\|_{L^2}^2\Big)
\le  C\big(E^D+\bar E_1^D).
\end{align*}
where
\begin{align*}
\bar E_1^D&~=\sup_{t\in[0,T]}\Big(\|\bar{f}_1^D(t)\|_{H^{s-\f12}}+\|\partial_t\bar{f}_1^D(t)\|_{H^{s-\f32}}\Big).
\end{align*}
Recall that
\begin{align*}
\|\bar{f}_1^D\|_{H^{s-\f12}}^2+\|\partial_t\bar{f}_1^D\|_{H^{s-\f32}}^2\le C
\Big(\bar E_{s}^D(\bar{f}_1^D,\partial_t\bar{f}_1^D)+\|\bar{f}_1^D\|_{L^2}^2
+\|\partial_t\bar{f}_1^D\|_{L^2}^2\Big).
\end{align*}
Thus, we obtain
\begin{align*}
\sup_{t\in[0,T]}\left(\|\bar{f}_1^D(t)\|_{H^{s-1}}+\|\partial_t\bar{f}_1^D
(t)\|_{H^{s-\f32}}\right)\le C(e^{CT}-1)E^D,
\end{align*}
which implies
\begin{align}\label{eq:f-d}
\sup_{t\in[0,T]}\left(\|\bar{f}^D(t)\|_{H^{s-1}}+\|\partial_t\bar{f}^D
(t)\|_{H^{s-\f32}}\right)\le C(e^{CT}-1)E^D.
\end{align}

Similar to the proof of Proposition \ref{prop:vorticity}, we can show that
\begin{align}\label{eq:wj-d}
\sup_{t\in[0,T]}\left(\|\bar\vom_*^D(t)\|_{H^{s-2}(\Omega_*^\pm)}+\|\bar\vj_*^D\|_{H^{s-2}(\Omega_*^\pm)}\right)\le C(e^{CT}-1)E^D.
\end{align}

Using the equation
\begin{align*}
\bar\beta^{\pm C}_i(t)=\beta^{\pm C}_i(0)+\int_0^t\int_{\Gamma^\pm}u_j^{\pm C}\big(\partial_ju_i^{\pm C}-h^{\pm C}_j\partial_jh^{\pm C}_i\big)dx'd\tau,
\end{align*}
we have
\begin{align}\label{eq:be-d}
|\bar\beta^{\pm D}_i(t)|\le|\beta^{\pm D}_{iI}|+TCE^D.
\end{align}
Similarly, we have
\begin{align}\label{eq:ga-d}
|\bar\gamma^{\pm D}_i(t)|\le|\gamma^{\pm D}_{iI}|+TCE^D.
\end{align}

Thus, we deduce from (\ref{eq:uh-d}) and (\ref{eq:f-d})--(\ref{eq:ga-d}) that
\begin{align*}
\bar{E}^D&\le C(e^{CT}-1+T)E^D.
\end{align*}
Then the proposition follows by taking $T$ small enough depending on $c_0, \delta_0, M_0$.
\end{proof}

\subsection{The limit system}

Proposition \ref{prop:iteration map} and Proposition \ref{prop:contraction} ensure that the map $\mathcal{F}$ has a unique fixed point
$(f,\vom^\pm, \vj^\pm,\beta^\pm_i, \gamma^\pm_i)$
in $\mathcal{X}(T,M_1,M_2)$. From the construction of $\mathcal{F}$, we know that $(f,\vom^\pm, \vj^\pm,\beta^\pm_i, \gamma^\pm_i)$
satisfies
\begin{align}
\partial_tf&~=\theta-\langle\theta\rangle,\label{eq:limit-theta}\\\nonumber
\partial_t\theta&~=
-\big((\underline{u}^+_1+\underline{u}^-_1)\partial_1\theta+(\underline{u}^+_2+\underline{u}^-_2)\partial_2\theta\big)
\\\nonumber
&\quad+\frac12\sum_{i,j=1,2}\big(\underline{u}_i^+\underline{u}^+_j-\underline{h}^+_i\underline{h}^+_j+\underline{u}^-_i\underline{u}^-_j-\underline{h}^-_i\underline{h}^-_j\big)
\partial_i\partial_jf\\\nonumber
&\quad+\frac12(\mathcal{N}^+_f-\mathcal{N}^-_f)\widetilde{\mathcal{N}}^{-1}_f
\sum_{i,j=1,2}\big(\underline{u}_i^+\underline{u}^+_j-\underline{h}^+_i\underline{h}^+_j-\underline{u}^-_i\underline{u}^-_j
+\underline{h}^-_i\underline{h}^-_j\big)\partial_i\partial_jf
\\\nonumber
&\quad+(\mathcal{N}^+_f-\mathcal{N}^-_f)\widetilde{\mathcal{N}}^{-1}_f
\big((\underline{u}^+_1-\underline{u}^-_1)\partial_1\theta+(\underline{u}^+_2-\underline{u}^-_2)\partial_2\theta\big)\\\nonumber
&\quad-\mathcal{N}^+_f\widetilde{\mathcal{N}}^{-1}_f(\vN_f\cdot\underline{\nabla(p_{\vum,\vum}-p_{\vhm,\vhm})})\\
&\quad-\mathcal{N}^-_f\widetilde{\mathcal{N}}^{-1}_f(\vN_f\cdot\underline{\nabla(p_{\vup, \vup}-p_{\vhp, \vhp})}),
\end{align}
where $(\vu^\pm,\vh^\pm)$ sovles the div-curl system
\ben
\left\{
\begin{array}{l}
\curl \vu^\pm=P_f^{div}\vom^\pm,\quad \div\vu^\pm=0\quad \text{in} \quad \Om_f^\pm,\\
\vu^\pm\cdot\vN_f=\partial_tf\quad\text{on}\quad\Gamma_f,\\
u_3^\pm=0\quad\text{on}\quad\Gamma^\pm,\quad \int_{\Gamma^\pm}u_i^\pm dx'=\beta_i^\pm,\\
\partial_t\beta_i^\pm~=-\int_{\Gamma^\pm}(u_j^\pm\partial_ju_i^\pm-h_j^\pm\partial_jh_i^\pm)dx',
\end{array}\right.
\een
and
\ben
\left\{
\begin{array}{l}
\curl\vh^\pm=P_f^{div}\vj^\pm,\quad \div \vh^\pm=0\quad \text{in} \quad \Om_f^\pm,\\
\vh^\pm\cdot\vN_f=0\quad \text{ on}\quad\Gamma_f,\\
h_3^\pm=0\quad\text{on}\quad\Gamma^\pm,\quad \int_{\Gamma^\pm}h_i^\pm dx'=\gamma_i^\pm,\\
\partial_t\gamma_i^\pm=-\int_{\Gamma^\pm}(u_j^\pm\partial_jh_i^\pm-h_j\partial_ju_i^\pm)dx',
\end{array}\right.
\een
and the vorticity $(\vom^\pm, \vj^\pm)$ satisfies
\begin{align}
&\partial_t\vom^\pm+\vu^\pm\cdot\nabla\vom^\pm=\vh^\pm\cdot\nabla\vj^\pm+\vom^\pm\cdot\nabla\vu^\pm-\vj^\pm\cdot\nabla\vh^\pm,\\
&\partial_t\vj^\pm+\vu^\pm\cdot\nabla\vj^\pm=\vh^\pm\cdot\nabla\vom^\pm+\vj^\pm\cdot\nabla\vu^\pm
-\vom^\pm\cdot\nabla\vh^\pm-2\nabla u_i^\pm\times\nabla h_i^\pm.\label{eq:limit-h}
\end{align}

Recall that $p_{\vupm_1, \vupm_2}$ is determined by the elliptic equation
\ben\label{eq:limit-pressure}
\left\{
\begin{array}{l}
\Delta p_{\vupm_1, \vupm_2}= -\mathrm{tr}(\nabla\vu_1^\pm\nabla\vu_2^\pm)
\quad \text{in}\quad\Omega^\pm_f,\\
p_{\vupm_1, \vupm_2}=0\quad\text{on}\quad\Gamma_f,
\quad\ve_3\cdot\nabla p_{\vupm_1, \vupm_2}=0\quad\text{on}\quad\Gamma^\pm.
\end{array}\right.
\een

\section{From the limit system to the current-vortex sheet system}

It is highly nontrivial whether the limit system (\ref{eq:limit-theta})-(\ref{eq:limit-h}) is equivalent to the current-vortex sheet system (\ref{cvs})-(\ref{cvs0}).
The proof is split into several steps.

\medskip

{\bf Step 1.} $\curl\vu^\pm=\vom^\pm$ and $\curl\vh^\pm=\vj^\pm$\medskip

From the fact that $\div\vu^\pm=\div\vh^\pm=0$, it is easy to get that
\begin{align*}
&\partial_t\div\vom^\pm+\vu^\pm\cdot\nabla\div\vom^\pm=\vh^\pm\cdot\nabla\div\vj^\pm,\\
&\partial_t\div\vj^\pm+\vu^\pm\cdot\nabla\div\vj^\pm=\vh^\pm\cdot\nabla\div\vom^\pm,
\end{align*}
which imply that $\div\vom^\pm=\div\vj^\pm=0$,  and thus $\curl\vu^\pm=\vom^\pm, \curl\vh^\pm=\vj^\pm$.
\medskip

{\bf Step 2.} Determination of the pressure\medskip

We define the projection operator $\mathcal{P}:L^2(\BT)\to L^2(\BT)$
\begin{align*}
\mathcal{P}g = g-\langle g\rangle.
\end{align*}

We introduce the pressure $p^\pm$ of the fluid by
\begin{align}
p^\pm=\mathcal{H}^\pm\underline{p}^\pm+p_{\vupm, \vupm}-p_{\vhpm, \vhpm},
\end{align}
where
\begin{align*}
\underline{p}^+=\underline{p}^-=\widetilde{\mathcal{N}}^{-1}_f\mathcal{P}(g^--g^+)
\end{align*}
with
\begin{align*}
g^\pm=&~2(\underline{u}^\pm_1\partial_1\theta+\underline{u}^\pm_2\partial_2\theta)+\vN\cdot\underline{\nabla(p_{\vupm, \vupm}+p_{\vhpm, \vhpm})}\\
&+\sum_{i,j=1,2}\big(\underline{u}_i^\pm \underline{u}^\pm_j-\underline{h}^\pm_i\underline{h}^\pm_j\big)\partial_i\partial_jf.
\end{align*}
Here we should be careful that $g^\pm$ may not have zero mean.

\medskip

{\bf Step 3.} The velocity equation
\medskip

We denote
\begin{align*}
\vw^\pm\triangleq\partial_t\vu^\pm+\vu^\pm\cdot\nabla\vu^\pm-\vh^\pm\cdot\nabla\vh^\pm+\nabla p^\pm.
\end{align*}
We will show that
\ben\label{eq:limit-u}
\left\{
\begin{array}{l}
\div\vw^\pm=0,\quad \curl\vw^\pm=0\quad\text{in}\quad\Omega^\pm_f,\\
\vw^\pm\cdot\vN_f=0\quad\text{on}\quad\Gamma_f,\\
w_3^\pm=0\quad\text{on}\quad \Gamma^\pm,\quad\int_{\Gamma^\pm}w_i^\pm dx'=0(i=1,2).
\end{array}\right.
\een
This in particular implies that
\begin{align*}
\partial_t\vu^\pm+\vu^\pm\cdot\nabla\vu^\pm-\vh^\pm\cdot\nabla\vh^\pm+\nabla p^\pm=0\quad \text{in}\quad\Omega^\pm_f.
\end{align*}

First, by the definition of $p^+$, we have
\begin{align}\label{consis:div}
&\div \partial_t\vu^+= 0 =\div (-\vu^+\cdot\nabla\vu^++\vh^+\cdot\nabla\vh^++\nabla p^+).
\end{align}
A direct computation yields that
\begin{align*}\nonumber
\curl \partial_t\vu^+&=\partial_t\curl\vu^+=\partial_t\vom^+\\\nonumber
&=-\vu^+\cdot\nabla\vom^++\vh^+\cdot\nabla\vj^++\vom^+\cdot\nabla\vu^+-\vj^+\cdot\nabla\vh^+\\
&=\curl (-\vu^+\cdot\nabla\vu^++\vh^+\cdot\nabla\vh^++\nabla p^+).
\end{align*}
Thus, we obtain
\beno
\div\vw^+=0,\quad \curl\vw^+=0\quad\text{in}\quad\Omega^+_f.
\eeno

As $u_3=0, h_3=0$ on $\Gamma^+$, we have
\begin{align}\label{eq:limit-wb}
w_3^\pm=\partial_tu_3^\pm+u_i^\pm\partial_iu_3^\pm-h_i\partial_ih_3^\pm-\partial_3p^\pm=0\quad\text{on}\quad\Gamma^+.
\end{align}
Moreover, it holds that for $i=1,2$,
\begin{align*}
\int_{\Gamma^+}w_i^\pm dx'=\int_{\Gamma^+}\big(\partial_tu_i^\pm+u_j^\pm\partial_ju_i^\pm-h_j^\pm\partial_jh_i^\pm\big)dx'=0.
\end{align*}

By the converse computations in Section 5.1, we get
\begin{align}\nonumber
&\mathcal{P}\Big\{-2(u^+_1\partial_1\theta+u^+_2\partial_2\theta)-\vN\cdot\nabla p^+
-\sum_{i,j=1,2}u_i^+u^+_j\partial_i'\partial_j'f+\sum_{i,j=1,2}h^+_ih^+_j\partial_i\partial_jf\Big\}\\\nonumber
&=-2\mathcal{P}(u^+_1\partial_1\theta+u^+_2\partial_2\theta)+\mathcal{N}^+_f\Big(\widetilde{\mathcal{N}}^{-1}_f(g^--g^+)\Big)\\\nonumber
&\quad -\mathcal{P}\sum_{i,j=1,2}u_i^+u^+_j\partial_i\partial_jf+
\mathcal{P}\sum_{i,j=1,2}h^+_ih^+_j\partial_i\partial_jf\\\nonumber
&=-2\mathcal{P}(u^+_1\partial_1\theta+u^+_2\partial_2\theta)-\mathcal{N}^+_f\widetilde{\mathcal{N}}^{-1}_f
\Big(2(u^-_1\partial_1\theta+u^-_2\partial_2\theta)\\\nonumber
&\quad-\frac12\sum_{i,j=1,2}(u_i^+u^+_j-h^+_ih^+_j+u^-_iu^-_j-h^-_ih^-_j)
\partial_i\partial_jf\\\nonumber
&\quad+\frac12(\mathcal{N}^+_f-\mathcal{N}^-_f)\widetilde{\mathcal{N}}^{-1}_f
\sum_{i,j=1,2}(u_i^+u^+_j-h^+_ih^+_j-u^-_iu^-_j+h^-_ih^-_j)\partial_i\partial_jf
\\\nonumber
&\quad+\vN_f\cdot\nabla(p_{\vum, \vum}+p_{\vhm, \vhm})-2(u^+_1\partial_1\theta+u^+_2\partial_2\theta)
-\vN_f\cdot\nabla(p_{\vup, \vup}+p_{\vhp, \vhp})\Big)\\\nonumber
&=-\mathcal{P}\big((u^+_1+u^-_1)
\partial_1\theta+(u^+_2+u^-_2)\partial_2\theta\big)\\\nonumber
&\quad+\frac12\mathcal{P}\sum_{i,j=1,2}(u_i^+u^+_j-h^+_ih^+_j+u^-_iu^-_j-h^-_ih^-_j)\partial_i\partial_jf\\\nonumber
&\quad+\frac12(\mathcal{N}^+_f-\mathcal{N}^-_f)\widetilde{\mathcal{N}}^{-1}_f
\sum_{i,j=1,2}(u_i^+u^+_j-h^+_ih^+_j-u^-_iu^-_j+h^-_ih^-_j)\partial_i\partial_jf
\\\nonumber
&\quad+(\mathcal{N}^+_f-\mathcal{N}^-_f)\widetilde{\mathcal{N}}^{-1}_f
((u^+_1-u^-_1)\partial_1\theta+(u^+_2-u^-_2)\partial_2\theta)\\\nonumber
&\quad-\mathcal{N}^+_f\widetilde{\mathcal{N}}^{-1}_f(\vN_f\cdot\nabla(p_{\vum,\vum}-p_{\vhm,\vhm}))
-\mathcal{N}^-_f\widetilde{\mathcal{N}}^{-1}_f(\vN_f\cdot\nabla(p_{\vup, \vup}-p_{\vhp, \vhp}))\\
&=\mathcal{P}\partial_t\theta.\nonumber
\end{align}
Recalling that $\mathcal{P}\partial_t\theta=\mathcal{P}\partial_t^2f=\mathcal{P}\partial_t(\vu^+\cdot\vN_f)$, we obtain
\begin{align*}\nonumber
\mathcal{P}\Big\{\partial_t(&\vu^+\cdot\vN_f)+2\big(u^+_1\partial_1'
(\vu^+\cdot\vN_f)+u^+_2\partial_2'(\vu^+\cdot\vN_f)\big)+\vN_f\cdot\nabla p^+\\
&+\sum_{i,j=1,2}(h^+_ih^+_j-u_i^+u^+_j)\partial_i\partial_jf\Big\}=0,
\end{align*}
from which and the fact
\[\partial_t\vN_f=(-\partial_1\partial_tf,-\partial_2\partial_tf, 0)
=\big(-\partial_1(\vu^+\cdot\vN_f),-\partial_2(\vu^+\cdot\vN_f), 0\big),\]
we deduce that
\begin{align*}\nonumber
\mathcal{P}\Big\{(\partial_t\vu^++\partial_3\vu^+\partial_tf)\cdot\vN_f+
\big(u^+_1\partial_1'(\vu^+\cdot\vN_f)+u^+_2\partial_2'(\vu^+\cdot\vN_f)\big)\\
+\sum_{i,j=1,2}(h^+_ih^+_j-u_i^+u^+_j)\partial_i\partial_jf+\vN_f\cdot\nabla p^+\Big\}=0.
\end{align*}
Then we infer from Lemma \ref{rel-uh} that
\begin{align*}
\mathcal{P}\big\{(\partial_t\vu^+\cdot\vN_f+\vu^+\cdot\nabla\vu^+-\vh^+\cdot\nabla\vh^++\nabla p^+)|_{\Gamma_f}\cdot\vN_f\big\}=0.
\end{align*}
On the other hand, (\ref{consis:div}) and (\ref{eq:limit-wb}) imply that
\[
\int_{\BT}(\partial_t\vu^++\vu^+\cdot\nabla\vu^+-\vh^+\cdot\nabla\vh^++\nabla p^+)|_{\Gamma_f}\cdot\vN_fdx'=0,\]
which gives rise to
\begin{align*}
\vw^+\cdot\vN_f=(\partial_t\vu^++\vu^+\cdot\nabla\vu^+-\vh^+\cdot\nabla\vh^++\nabla p^+)\cdot\vN_f=0\quad\text{on}\quad \Gamma_f.
\end{align*}
This shows that $\vw^+$ satisfies the system (\ref{eq:limit-u}). The proof for $\vw^-$ is similar.

\medskip
{\bf Step 4.} The magnetic field equation
\medskip

It suffices to show that
\ben\label{eq:limit-H}
\left\{
\begin{array}{l}
\div \textbf{H}^\pm=0,\quad \curl\textbf{H}^\pm=0\quad\text{in}\quad\Omega^\pm_f,\\
\textbf{H}^\pm\cdot\vN_f=0\quad\text{on}\quad\Gamma_f,\\
H_3^\pm=0\quad\text{on}\quad \Gamma^\pm,\quad\int_{\Gamma^\pm}H_i^\pm dx'=0(i=1,2),
\end{array}\right.
\een
where $\textbf{H}^\pm=\partial_t\vh^\pm-\vh^\pm\cdot\nabla\vu^\pm+\vu^\pm\cdot\nabla\vh^\pm$. This implies that
\beno
\partial_t\vh^\pm-\vh^\pm\cdot\nabla\vu^\pm+\vu^\pm\cdot\nabla\vh^\pm=0\quad\text{in}\quad\Omega_f^\pm.
\eeno

From the fact that $\vh^+\cdot\vN_f=0$ on $\Gamma_f$, we deduce that
\begin{align}\nonumber
0&~=\partial_t(\vhp\cdot\vN_f)+u_1\partial_1'(\vhp\cdot\vN_f)+u_2\partial_2'(\vhp\cdot\vN_f)\\\nonumber
&~=(\partial_t\vhp+\partial_3\vhp\partial_tf)\cdot\vN_f+\vhp\cdot\partial_t\vN_f
+(u_1\partial_1\vhp+u_1\partial_3\vhp\partial_1f)\cdot\vN_f\\\nonumber
&\quad+u_1\vhp\cdot\partial_1\vN_f+ (u_2\partial_2\vhp+u_2\partial_3\vhp\partial_2f)\cdot\vN_f+u_2\vhp\cdot\partial_2\vN_f\\\nonumber
&~=(\partial_t\vhp+\vu^+\cdot\nabla\vh^+)\cdot\vN_f+\partial_tf\partial_3\vhp\cdot\vN_f+\vhp\cdot\partial_t\vN_f\\\nonumber
&\quad+u_1\vhp\cdot\partial_1\vN_f +u_2\vhp\cdot\partial_2\vN_f+(u_1\partial_3
\vhp\partial_1f+u_2\partial_3\vhp\partial_2f-u_3\partial_3\vhp)\cdot\vN_f\\
&~=(\partial_t\vhp+\vu^+\cdot\nabla\vh^+)\cdot\vN_f+\vhp\cdot\partial_t\vN_f
+u_1\vhp\cdot\partial_1\vN_f +u_2\vhp\cdot\partial_2\vN_f.\nonumber
\end{align}
On the other hand, we have
\begin{align*}\nonumber
&\vhp\cdot\partial_t\vN_f+u_1\vhp\cdot\partial_1\vN_f +u_2\vhp\cdot\partial_2\vN_f\\
&=-h^+_1\partial_1(\vup\cdot\vN_f)-h^+_2\partial_2(\vup\cdot\vN_f)+u_1\vhp\cdot\partial_1\vN_f +u_2\vhp\cdot\partial_2\vN_f\\
&=-h^+_1(\partial_1\vup+\partial_3\vup\partial_1f)\cdot\vN_f-
h^+_2(\partial_2\vup+\partial_3\vup\partial_2f)\cdot\vN_f\\
&\quad -h^+_1\vup\cdot\partial_1\vN_f-h^+_2\vup\cdot\partial_2\vN_f-\sum_{i,j=1,2}u_ih_j\partial_i\partial_jf\\
&=-h^+_1(\partial_1\vup+\partial_3\vup\partial_1f)\cdot\vN_f-
h^+_2(\partial_2\vup+\partial_3\vup\partial_2f)\cdot\vN_f\\
&=-(\vhp\cdot\nabla\vup)\cdot\vN_f-(\partial_3\vup\cdot\vN_f)(\vhp\cdot\vN_f)\\
&=-(\vhp\cdot\nabla\vup)\cdot\vN_f.
\end{align*}
Thus, we deduce that
\begin{align}
\big(\partial_t\vhp-\vhp\cdot\nabla\vup+\vu^+\cdot\nabla\vh^+\big)\cdot\vN_f=0\quad \text{on}\quad \Gamma_f.\nonumber
\end{align}

Moreover, we have
\begin{align}
\div(\partial_t\vhp)=0=\div(\vhp\cdot\nabla\vup-\vu^+\cdot\nabla\vh^+),\nonumber
\end{align}
and by (\ref{eq:limit-h}),
\begin{align}\nonumber
\curl(\partial_t\vhp)&=\partial_t\vj^+\\\nonumber
&=\curl(-\vu^+\cdot\nabla\vj^++\vh^+\cdot\nabla\vom^++\vj^+\cdot\nabla\vu^+
-\vom^+\cdot\nabla\vh^+-2\nabla u^+_i\times\nabla h^+_i)\\
&=\curl(\vhp\cdot\nabla\vup-\vu^+\cdot\nabla\vh^+).\nonumber
\end{align}
As $u_3=0, h_3=0$ on $\Gamma^\pm$, we have $H_3^\pm=0$ on $\Gamma^\pm$.
Moreover, it holds that for $i=1,2$,
\begin{align*}
\int_{\Gamma^+}H_i^\pm dx'=\int_{\Gamma^+}\big(\partial_th_i^\pm+u_j^\pm\partial_jh_i^\pm-h_j\partial_ju_i^\pm\big)dx'=0.
\end{align*}
This shows that  $\textbf{H}^+$ satisfies the system (\ref{eq:limit-u}). The proof for $\textbf{H}^-$ is similar.\medskip

Step 1-Step 4 ensure that $(\vu^\pm,\vh^\pm, f, p^\pm)$ is a solution of the system (\ref{cvs})-(\ref{cvs0}).

\section{Appendix}

\subsection{Paradifferential operator}
Let us recall some basic facts on paradifferential operator from \cite{Met}.

We first introduce the definition of the symbol with limited spatial smoothness.
We denote by $W^{k,\infty}(\T^d)$ the usual Sobolev spaces for $k\in \mathbf{N}$, and
the H\"{o}lder space with exponent $k$ for $k\in (0,1)$.

\begin{definition}
Given $\mu\in [0,1]$ and $m\in \mathbf{R}$, we denote by $\Gamma^m_\mu(\mathbf{T}^d)$ the space of locally bounded functions $a(x,\xi)$ on
$\mathbf{T}^d\times \mathbf{R}^d \backslash\{0\}$, which are $C^\infty$ with respect to $\xi$ for $\xi\neq 0$ and such that, for all $\alpha\in \mathbf{N}^d$ and all
$\xi\neq 0$, the function $x\rightarrow \pa_\xi^\al a(x, \xi)$ belongs to $W^{\mu,\infty}$ and there exists a constant $C_\al$ such that
\beno
\|\pa_\xi^\al a(\cdot,\xi)\|_{W^{\mu,\infty}}\leq C_\al(1+|\xi|)^{m-|\al|}\quad \textrm{for any} \quad |\xi|\geq \f12.
\eeno
The semi-norm of the symbol is defined by
\beno
M^m_\mu(a)\eqdefa\sup_{|\alpha|\leq 3d/2+1+\mu}\sup_{|\xi|\geq1/2} \|(1+|\xi|)^{|\alpha|-m}\pa^\alpha_\xi a(\cdot,\xi)\|_{W^{\mu,\infty}}.
\eeno
Especially, if $a$ is a function independent of $\xi$, then
\beno
M^m_\mu(a)=\|a\|_{W^{\mu,\infty}}.
\eeno
\end{definition}

Given a symbol $a$, the paradifferential operator $T_a$ is defined by
\ben\label{paradiff}
\widehat{T_au}(\xi)\eqdefa (2\pi)^{-d}\int \chi(\xi-\eta,\eta)\widehat{a}(\xi-\eta,\eta)\psi(\eta)\widehat{u}(\eta) d \eta,
\een
where $\widehat a(\theta,\xi)$ is the Fourier transform of $a$ with respect to the first variable;
the $\chi(\theta,\xi)\in C^\infty(\R^d\times \R^d)$ is an admissible cut-off function: there exists $\e_1,\e_2$ such that $0<\e_1<\e_2$ and
\beno
\chi(\theta,\eta)=1 \quad\textrm{for}\quad |\theta|\leq \e_1 |\eta|, \quad \chi(\theta,\eta)=0 \quad\textrm{for} \quad|\theta|\geq \e_2 |\eta|,
\eeno
and such that for any $(\theta,\eta)\in \R^d\times \R^d$,
\beno
|\pa_\theta^\al \pa_\eta^\beta\chi(\theta,\eta)|\leq C_{\al,\beta}(1+|\eta|)^{-|\al|-|\beta|}.
\eeno
The cut-off function $\psi(\eta)\in C^\infty(\R^d)$ satisfies
\beno
\psi(\eta)=0 \quad\textrm{for}\quad |\eta|\leq 1, \quad \psi(\eta)=1 \quad\textrm{for}\quad |\eta|\geq 2.
\eeno
Here we will take the admissible cut-off function $\chi(\theta,\eta)$ as follows
\beno
\chi(\theta,\eta)=\sum_{k=0}^\infty\zeta_{k-3}(\theta)\varphi_k(\eta),
\eeno
where $\zeta(\theta)=1$ for $|\theta|\le 1.1$ and $\zeta(\theta)=0$ for $|\theta|\ge 1.9$; and
\beno
&&\zeta_k(\theta)=\zeta(2^{-k}\theta)\quad \textrm{for}\quad k\in\Z,\\
&&\varphi_0=\zeta,\quad \varphi_k=\zeta_k-\zeta_{k-1}\quad \textrm{for}\quad k\ge 1.
\eeno

We also introduce the Littlewood-Paley operators $\Delta_k, S_k$ defined by
\beno
&&\Delta_k u={\mathcal F}^{-1}\big(\varphi_k(\xi)\widehat u(\xi)\big)\quad\textrm{ for }k\ge 0, \quad \Delta_ku=0\quad\textrm{ for }k<0,\\
&&S_ku=\sum_{\ell\le k}\Delta_\ell u\quad\textrm{ for }k\in\Z.
\eeno
In the case when the function $a$ depends only on the first variable $x$ in $T_au$, we  take $\psi=1$. Then $T_au$ is just the usual
Bony's paraproduct  defined by
\ben\label{paraproduct}
T_au=\sum_{k}S_{k-3}a\Delta_ku.
\een

We have the following well-known Bony's decomposition(see \cite{BCD}):
\ben\label{Bony}
au=T_au+T_ua+R(u,a),
\een
where the remainder term  $R(u,a)$ is defined by
\ben
&&R(u,a)=\sum_{|k-\ell|\le 2}\Delta_{k}a\Delta_\ell u.
\een

We have the following classical estimate for $R(u,a)$.

\begin{lemma}
\label{lem:bony}
It holds that
\begin{itemize}

\item[1.] If $s\in \R$ and $\sigma<\f d2$, then we have
\beno
\|T_au\|_{H^s}\le C\min\Big(\|a\|_{L^\infty}\|u\|_{H^s}, \|a\|_{H^{\sigma}}\|u\|_{H^{s+\f d2-\sigma}}, \|a\|_{H^\f d2}\|u\|_{H^{s+\e}}\Big)
\eeno
for any $\e>0$.

\item[2.] If $s>0$ and $s_1,s_2\in \R$ with $s_1+s_2=s+\f d2$, then we have
\beno
\|R(u,a)\|_{H^s}\le C\|a\|_{H^{s_1}}\|u\|_{H^{s_2}}.
\eeno
\end{itemize}
\end{lemma}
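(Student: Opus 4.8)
The plan is to prove both bounds by a standard Littlewood--Paley decomposition, relying throughout on three ingredients that I would recall first: the characterization $\|f\|_{H^s}^2\sim\sum_{k\ge0}2^{2ks}\|\Delta_kf\|_{L^2}^2$, Bernstein's inequality $\|\Delta_kg\|_{L^\infty}\lesssim 2^{kd/2}\|\Delta_kg\|_{L^2}$, and two summation principles: \textbf{(a)} if $f=\sum_kf_k$ with each $\widehat{f_k}$ supported in a dyadic annulus $\{|\xi|\sim 2^k\}$, then $\|f\|_{H^s}\lesssim\big\|(2^{ks}\|f_k\|_{L^2})_k\big\|_{\ell^2}$ for every real $s$, by almost orthogonality; \textbf{(b)} the same conclusion holds when the $\widehat{f_k}$ are supported only in balls $\{|\xi|\lesssim 2^k\}$, provided $s>0$, by writing $\Delta_jf=\sum_{k\gtrsim j}\Delta_jf_k$ and summing the factor $2^{(j-k)s}$, which is summable precisely because $s>0$.

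For part (1), since $\widehat{S_{k-3}a}$ is supported in $\{|\xi|\le 2^{k-2}\}$ and $\widehat{\Delta_ku}$ in $\{|\xi|\sim 2^k\}$, each block $S_{k-3}a\,\Delta_ku$ is spectrally localized in an annulus of size $2^k$, so principle (a) reduces the matter to bounding $2^{ks}\|S_{k-3}a\|_{L^\infty}\|\Delta_ku\|_{L^2}$ in $\ell^2_k$, and the three estimates correspond to three ways of bounding $\|S_{k-3}a\|_{L^\infty}$. The first is trivial: $\|S_{k-3}a\|_{L^\infty}\le\|a\|_{L^\infty}$. For the second I would use Bernstein, $\|S_{k-3}a\|_{L^\infty}\lesssim\sum_{0\le\ell\le k-3}2^{\ell d/2}\|\Delta_\ell a\|_{L^2}=\sum_{\ell\le k-3}2^{(\ell-k)(d/2-\sigma)}\,c_\ell$ with $c_\ell:=2^{\ell\sigma}\|\Delta_\ell a\|_{L^2}\in\ell^2_\ell$ of norm $\lesssim\|a\|_{H^\sigma}$; since $\sigma<d/2$ the weights $2^{(\ell-k)(d/2-\sigma)}$ form an $\ell^1$ convolution kernel, so this is bounded in $\ell^\infty_k$ by $C\|a\|_{H^\sigma}$, and pairing with $2^{ks}\|\Delta_ku\|_{L^2}=2^{-k(d/2-\sigma)}b_k$, $b_k:=2^{k(s+d/2-\sigma)}\|\Delta_ku\|_{L^2}\in\ell^2_k$, produces the factor $\|u\|_{H^{s+d/2-\sigma}}$. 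For the endpoint $\sigma=d/2$, the same computation gives only $\|S_{k-3}a\|_{L^\infty}\lesssim\sqrt k\,\|a\|_{H^{d/2}}$ by Cauchy--Schwarz, and the loss $\sqrt k$ is absorbed by writing $2^{ks}\|\Delta_ku\|_{L^2}=2^{-k\e}\big(2^{k(s+\e)}\|\Delta_ku\|_{L^2}\big)$ and using $\sup_k\sqrt k\,2^{-k\e}<\infty$.

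For part (2) I would write $R(u,a)=\sum_k\Delta_ka\,\widetilde\Delta_ku$ with $\widetilde\Delta_k:=\sum_{|j-k|\le2}\Delta_j$; each such piece is localized only in a ball $\{|\xi|\lesssim 2^k\}$, which is where principle (b), and hence the hypothesis $s>0$, comes in. By H\"older and Bernstein, $\|\Delta_ka\,\widetilde\Delta_ku\|_{L^2}\le\|\Delta_ka\|_{L^2}\|\widetilde\Delta_ku\|_{L^\infty}\lesssim 2^{kd/2}\|\Delta_ka\|_{L^2}\|\widetilde\Delta_ku\|_{L^2}$, and splitting $2^{ks+kd/2}=2^{ks_1}2^{ks_2}$ via $s_1+s_2=s+d/2$ gives
\[
2^{ks}\|\Delta_ka\,\widetilde\Delta_ku\|_{L^2}\lesssim\big(2^{ks_1}\|\Delta_ka\|_{L^2}\big)\big(2^{ks_2}\|\widetilde\Delta_ku\|_{L^2}\big),
\]
a product of two $\ell^2_k$ sequences with norms $\lesssim\|a\|_{H^{s_1}}$ and $\lesssim\|u\|_{H^{s_2}}$, hence lying in $\ell^1_k\subset\ell^2_k$, which closes the estimate.

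I do not expect a serious obstacle; the only delicate points are the sequence bookkeeping in principles (a) and (b) — keeping straight which exponent lives in $\ell^2$ and which in $\ell^\infty$, and applying Young's inequality for the $\ell^1\ast\ell^2$ convolutions — together with the endpoint $\sigma=d/2$ of part (1), which is exactly where the loss of $\e$ derivatives is forced and where one must not attempt $\e=0$. One should also recall that on the torus all low frequencies are carried by the single block $\Delta_0$, so the sums effectively run over indices $\ge0$ and Bernstein applied to $\Delta_0$ causes no difficulty.
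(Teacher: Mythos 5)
The paper does not prove this lemma: it is stated as a classical fact and the reader is referred to \cite{BCD}. Your proof is the standard textbook Littlewood--Paley argument (annulus and ball almost-orthogonality, Bernstein, Young/Cauchy--Schwarz on the dyadic sums) and it is correct.

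One small slip worth fixing: in the chain for the second bound of part (1), the displayed equality
$\sum_{0\le\ell\le k-3}2^{\ell d/2}\|\Delta_\ell a\|_{L^2}=\sum_{\ell\le k-3}2^{(\ell-k)(d/2-\sigma)}c_\ell$
is off by a factor $2^{k(d/2-\sigma)}$; the correct identity is
$\sum_{\ell\le k-3}2^{\ell d/2}\|\Delta_\ell a\|_{L^2}=2^{k(d/2-\sigma)}\sum_{\ell\le k-3}2^{(\ell-k)(d/2-\sigma)}c_\ell$,
so the conclusion should read $\|S_{k-3}a\|_{L^\infty}\lesssim 2^{k(d/2-\sigma)}\|a\|_{H^\sigma}$ rather than a $k$-uniform bound (the latter would contradict the failure of $H^\sigma\hookrightarrow L^\infty$ for $\sigma<d/2$). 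Your subsequent pairing with $2^{ks}\|\Delta_ku\|_{L^2}=2^{-k(d/2-\sigma)}b_k$ shows you were tracking this factor anyway, so the argument goes through once the display is corrected.
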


Finally, let us recall the symbolic calculus of paradifferential operator in Sobolev space.

\begin{proposition}\label{prop:symcal}
Let $m, m'\in \R$.

\begin{itemize}

\item[1.]If $a\in\Gamma^m_0(\mathbf{T}^d)$, then for any $s\in\R$,
\beno
\|T_a\|_{H^s\rightarrow H^{s-m}}\leq CM_0^m(a).
\eeno

\item[2.] If $a\in\Gamma^{m}_{\rho}(\mathbf{T}^d), b \in\Gamma^{m'}_{\rho}(\mathbf{T}^d)$ for $\rho>0$, then for any $s\in\R$,
\beno
\|T_aT_b-T_{a\#b}\|_{H^s\rightarrow H^{s-m-m'+\rho}}\leq CM_\rho^{m_1}(a)M_0^{m'}(b)+CM_0^{m_1}(a)M_\rho^{m'}(b),
\eeno
where $a\#b=\sum_{|\al|<\rho}\pa_\xi^\al a(x,\xi)D_x^\al b(x,\xi), D_x=\f {\pa_x} i$.

\item[3.] If $a\in\Gamma^m_\rho(\mathbf{T}^d)$ for $\rho\in (0,1]$, then for any $s\in\R$,
\beno
\|T_{a^*}-(T_a)^*\|_{H^s\rightarrow H^{s-m+\rho}}\leq CM_\rho^m(a).
\eeno
\end{itemize}
Here $(T_a)^*$ is the adjoint operator of $T_a$, and $C$ is a constant independent of $a,b$.
\end{proposition}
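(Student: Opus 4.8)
The plan is to recall the proof of the paradifferential symbolic calculus as in \cite{Met} (see also \cite{BCD}); we only outline the main steps, since all three statements are classical. The common mechanism is to reduce everything to dyadic Littlewood--Paley pieces and to exploit the spectral localization built into the admissible cut-off $\chi=\sum_k\zeta_{k-3}\varphi_k$: in the paraproduct $T_a u=\sum_k S_{k-3}a\,\Delta_k u$ (and in its symbol analogue \eqref{paradiff}), each summand has spatial frequency $\sim 2^k$ because the $x$-frequency of the coefficient is $\lesssim 2^{k-3}$ while $\Delta_k u$ has frequency $\sim 2^k$; in particular there are no ``high--high to low'' interactions. For Part 1, decompose $T_a u=\sum_k u_k$ with $\widehat{u_k}$ supported in an annulus $|\xi|\sim 2^k$; after a further dyadic decomposition of $a$ in the $\xi$-variable, on that annulus $u_k$ is (a low-pass of) $a(\cdot,\xi)\Delta_k u$, and the symbol bound gives $L^\infty$-control $\lesssim M^m_0(a)\,2^{km}$. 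Hence $\|\Delta_j T_a u\|_{L^2}\lesssim M^m_0(a)\sum_{|j-k|\le 2}2^{km}\|\Delta_k u\|_{L^2}$, and summing against $2^{2j(s-m)}$ together with $\|v\|_{H^s}^2\simeq\sum_j 2^{2js}\|\Delta_j v\|_{L^2}^2$ and almost-orthogonality yields $\|T_a u\|_{H^{s-m}}\lesssim M^m_0(a)\|u\|_{H^s}$; the sum over the auxiliary dyadic-in-$\xi$ decomposition converges because $M^m_0(a)$ controls $\xi$-derivatives of $a$ up to order larger than $d/2$.

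Part 2 is the heart of the matter. Writing $T_aT_b u$ via \eqref{paradiff} one obtains a double oscillatory integral, and one Taylor-expands $a(x,\zeta)$ in $\zeta$ about the frequency $\eta$ carried by the output of $T_b$. The terms with $|\alpha|<\rho$ reassemble, up to controllable errors, into $T_{a\#b}$ with $a\#b=\sum_{|\alpha|<\rho}\partial_\xi^\alpha a\,D_x^\alpha b$; the integral remainder of order $\rho$ is estimated by the $W^{\rho,\infty}$-norm of a frequency-localized piece of $a$ times $M^{m'}_0(b)$, which produces the gain of $\rho$ derivatives and hence the bound $CM^m_\rho(a)M^{m'}_0(b)$. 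Expanding $b$ instead of $a$ (or, equivalently, by duality using Part 3) produces the symmetric bound $CM^m_0(a)M^{m'}_\rho(b)$. The spectral gap in $\chi$ is used twice: it forces the frequency of the $T_b$-output and the frequency at which $a$ is evaluated to be comparable, so the sum over dyadic scales localizes to $|j-k|=O(1)$ and converges geometrically, and it kills the off-diagonal interactions. One then checks, by the same Littlewood--Paley counting as in Part 1, that every leftover --- the commutator of the two cut-offs, the Taylor remainder, and the residual low-frequency contribution --- defines an operator of order $m+m'-\rho$. For Part 3, since $\rho\le 1$ the adjoint symbol $a^*$ reduces to $\bar a$, so the claim is that $(T_a)^*$ coincides with $T_{\bar a}$ up to an operator of order $m-\rho$; comparing the kernel of $(T_a)^*$ read off from \eqref{paradiff} with that of $T_{a^*}$, their difference is a first-order Taylor remainder in the $x$-variable of the symbol, controlled by $M^m_\rho(a)$, and the mapping estimate follows exactly as in Part 1.

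The step I expect to be the main obstacle is Part 2: organizing the Taylor expansion so that precisely the terms $|\alpha|<\rho$ split off, and then proving that each of the remaining pieces --- the cut-off commutator, the integral remainder, and the non-localized low-frequency part --- is bounded from $H^s$ to $H^{s-m-m'+\rho}$ with the stated dependence on the semi-norms, keeping careful track of which semi-norm ($M_\rho$ or $M_0$) falls on $a$ and which on $b$ in each error term, and of the uniformity of all constants in $m,m',\rho,s,d$ but not in $a,b$. This bookkeeping is routine but lengthy, which is why the argument is customarily imported verbatim from \cite{Met}.
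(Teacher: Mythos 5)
The paper does not prove this proposition; it is stated without proof as a recollection of classical facts from \cite{Met} (Section 11 opens with ``Let us recall some basic facts on paradifferential operator from \cite{Met}''), and your proposal likewise defers the full argument to \cite{Met} while giving an accurate high-level outline of that standard proof (dyadic localization and spectral support of $S_{k-3}a\,\Delta_k u$ for Part 1, Taylor expansion about the output frequency plus the spectral gap of $\chi$ for Part 2, and the one-step Taylor remainder for Part 3). Since both you and the paper treat this as an imported reference result rather than something to be reproved, the approaches coincide and there is no gap to report.
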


A direct application of Proposition \ref{prop:symcal}  gives the following commutator estimate.

\begin{lemma}\label{lem:commutator}
If $s>1+\f d2$, then we have
\beno
\big\|[a, \langle\na\rangle^s]u\big\|_{L^2}\le C\|a\|_{H^{s}}\|u\|_{H^{s-1}}.
\eeno
\end{lemma}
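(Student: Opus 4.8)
The plan is to expand the commutator by applying Bony's decomposition (\ref{Bony}) to both products $a\langle\na\rangle^s u$ and $\langle\na\rangle^s(au)$, so that $[a,\langle\na\rangle^s]u$ splits into a principal term, which is a commutator of two paraproducts, plus several paraproduct/remainder terms that are individually smoothing. A preliminary remark is that, since the symbol $\langle\xi\rangle^s$ is independent of $x$, the operator $\langle\na\rangle^s$ differs from the paradifferential operator $T_{\langle\xi\rangle^s}$ by $E\eqdefa\langle\na\rangle^s-T_{\langle\xi\rangle^s}$, whose symbol $\langle\xi\rangle^s(1-\psi(\xi))$ is smooth and compactly supported in $\xi$; hence $E$ is bounded from any Sobolev space to any other and contributes a harmless term. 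Writing $au=T_au+T_ua+R(a,u)$ and $\langle\na\rangle^s u$ likewise, one obtains
\[
[a,\langle\na\rangle^s]u=\big(T_aT_{\langle\xi\rangle^s}-T_{\langle\xi\rangle^s}T_a\big)u+T_{\langle\na\rangle^s u}a+R(a,\langle\na\rangle^s u)-\langle\na\rangle^s T_u a-\langle\na\rangle^s R(a,u)
\]
modulo terms produced by $E$.

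For the principal term I would use that $s>1+\f d2$ gives $a\in W^{1,\infty}\sset\Gamma^0_1(\T^d)$, while $\langle\xi\rangle^s\in\Gamma^s_1(\T^d)$, so Proposition \ref{prop:symcal}(2) with $\rho=1$ shows that both $T_aT_{\langle\xi\rangle^s}$ and $T_{\langle\xi\rangle^s}T_a$ coincide with $T_{a\langle\xi\rangle^s}$ up to operators bounded from $H^{s-1}$ to $L^2$ with norm $\le CM_1^0(a)\le C\|a\|_{W^{1,\infty}}\le C\|a\|_{H^s}$; note that the first-order correction terms in the symbolic products $a\#\langle\xi\rangle^s$ and $\langle\xi\rangle^s\#a$ do not appear because $|\al|<\rho=1$ forces $\al=0$. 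Subtracting, $\big\|\big(T_aT_{\langle\xi\rangle^s}-T_{\langle\xi\rangle^s}T_a\big)u\big\|_{L^2}\le C\|a\|_{H^s}\|u\|_{H^{s-1}}$.

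The remaining four terms are estimated directly from Lemma \ref{lem:bony}. For $T_{\langle\na\rangle^s u}a$, the second bound in Lemma \ref{lem:bony}(1) gives $\|T_{\langle\na\rangle^s u}a\|_{L^2}\le C\|\langle\na\rangle^s u\|_{H^{-1}}\|a\|_{H^{\f d2+1}}=C\|u\|_{H^{s-1}}\|a\|_{H^{\f d2+1}}$, controlled by $\|u\|_{H^{s-1}}\|a\|_{H^s}$ since $\f d2+1<s$. For $\langle\na\rangle^s T_u a$ one uses $\|\langle\na\rangle^s T_u a\|_{L^2}=\|T_u a\|_{H^s}\le C\|u\|_{L^\infty}\|a\|_{H^s}\le C\|u\|_{H^{s-1}}\|a\|_{H^s}$, since $s-1>\f d2$. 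The two remainder terms are handled by Lemma \ref{lem:bony}(2): for $\langle\na\rangle^s R(a,u)$ one takes $s_1=\f d2+1+\e$, $s_2=s-1-\e$ with $\e>0$ small (so $s_1+s_2=\f d2+s$, $s_2>0$, $s_1\le s$), giving $\|R(a,u)\|_{H^s}\le C\|a\|_{H^s}\|u\|_{H^{s-1}}$; for $R(a,\langle\na\rangle^s u)$ one estimates it in $H^\e$ with $s_1=\f d2+1+\e$ and $s_2=-1$, so that $\|R(a,\langle\na\rangle^s u)\|_{L^2}\le\|R(a,\langle\na\rangle^s u)\|_{H^\e}\le C\|a\|_{H^{\f d2+1+\e}}\|u\|_{H^{s-1}}\le C\|a\|_{H^s}\|u\|_{H^{s-1}}$. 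Adding the five contributions (and the $E$-terms) yields the claim.

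The only delicate point is the bookkeeping of Sobolev indices in the two remainder estimates, which are borderline at $L^2$ and require precisely the strict inequality $s>1+\f d2$ to leave room for a small $\e$; the principal-term estimate is routine once Proposition \ref{prop:symcal} is available. No new idea beyond the paralinearization machinery of the appendix is needed.
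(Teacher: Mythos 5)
Your proof is correct and is exactly the natural way to flesh out the paper's terse assertion that the lemma is ``a direct application of Proposition~\ref{prop:symcal}'': reduce to the paradifferential commutator $T_aT_{\langle\xi\rangle^s}-T_{\langle\xi\rangle^s}T_a$ via Bony's decomposition, handle that principal term with the symbolic calculus (where $s>1+\tfrac d2$ gives $a\in W^{1,\infty}\subset\Gamma^0_1$), and control the paraproduct and remainder pieces with Lemma~\ref{lem:bony}. The index bookkeeping in the two remainder estimates is carried out correctly and uses the strict inequality $s>1+\tfrac d2$ in exactly the right places.
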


\subsection{Elliptic estimates in a strip}

Let $\cS_f\triangleq\big\{(x,y): x\in \T^2,-1<y<f(x)\big\}$ be a strip,
where $f(x)$ satisfies
\ben\label{ass:f-app}
1+f(x)\ge c_0>0\quad \textrm{for }x\in \T^2.
\een

We consider the elliptic boundary value problem in $\cS_f$:
\ben\label{eq:elliptic}
\left\{\begin{array}{l}
\Delta_{x,y}\Phi=0\quad \text{in}\quad\cS_f,\\
\Phi(x,f(x))=\psi(x) \quad \text{for}\quad x\in\mathbb{T}^2,\\
\partial_y\Phi(x,-1)=0\quad \text{for}\quad x\in\mathbb{T}^2.
\end{array}\right.
\een
The Lax-Milgram theorem ensures that for $\phi(x)\in H^\f12(\T^2)$, there exits a unique weak solution $\Phi(x,z)\in H^1(\cS_f)$ satisfying
\ben
\|\Phi\|_{H^1(\cS_f)}\le C\|\phi\|_{H^\f12},
\een
where the constant $C$ depends on $c_0$ and $\|f\|_{W^{1,\infty}}$.

The following elliptic estimate is classical(see also the proof of Proposition \ref{prop:elliptic}).

\begin{proposition}\label{prop:elliptic-c}
Let $\Phi\in H^1(\cS_f)$ be a weak solution of (\ref{eq:elliptic}).
Assume that $f\in H^{s+\f12}(\T^d)$ for $s>\f d 2+\f12$.
Then for any integer $\sigma\in [0, s]$, it holds that
\begin{align}\label{eq:ellip-app}
&\|\Phi\|_{H^{\sigma+1}(\cS_f)}\leq C\big(c_0, \|f\|_{H^{s+\f12}}\big)\|\psi\|_{H^{\sigma+\f12}}.
\end{align}
\end{proposition}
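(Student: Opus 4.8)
The plan is to flatten $\cS_f$ onto the fixed strip $\cS=\T^d\times(-1,0)$, turn \eqref{eq:elliptic} into a uniformly elliptic divergence‑form problem with variable coefficients, and then run an induction on $\sigma$.

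\textbf{Step 1 (flattening).} I would use the regularized diffeomorphism $\rho_\delta$ already introduced in the paper (the $z$‑dependent mollification of $f$, following \cite{AM}) to build $\Theta:\cS\to\cS_f$, $\Theta(x,z)=(x,\rho_\delta(x,z))$, with $\rho_\delta(x,0)=f(x)$, $\rho_\delta(x,-1)=-1$, $\pa_z\rho_\delta\ge c(c_0,\|f\|_{W^{1,\infty}})>0$ for $\delta\le\delta_0$, and $\na_{x,z}\rho_\delta\in C([-1,0];H^{s-\f12}(\T^d))$ with the rough part of $\na_x\rho_\delta$ concentrated near $z=0$ and $\na_{x,z}\rho_\delta$ of class $C^\infty$ on each slab $\{z\le-\epsilon\}$, controlled by low norms of $f$. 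Setting $v=\Phi\circ\Theta$, the equation becomes
\[
\div_{x,z}\big(A\,\na_{x,z}v\big)=0\ \ \text{in }\cS,\qquad v|_{z=0}=\psi,\qquad (A\,\na_{x,z}v)\cdot\ve_{d+1}\big|_{z=-1}=0,
\]
where $A=A(x,z)$ is symmetric, uniformly positive definite with constants depending on $c_0,\|f\|_{W^{1,\infty}}$, and $A-I\in C([-1,0];H^{s-\f12})$ with norm $\le C(c_0,\|f\|_{H^{s+\f12}})$; since $\na_x\rho_\delta=0$ at $z=-1$, the conormal condition there reduces to $\pa_z v|_{z=-1}=0$. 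By change‑of‑variables estimates in the spirit of Lemma~\ref{lem:basic}, \eqref{eq:ellip-app} is equivalent to $\|v\|_{H^{\sigma+1}(\cS)}\le C(c_0,\|f\|_{H^{s+\f12}})\|\psi\|_{H^{\sigma+\f12}}$.

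\textbf{Step 2 (induction on $\sigma$).} For $\sigma=0$ this is the Lax--Milgram bound recorded above. Assume it at level $\sigma-1\ge0$ and take $1\le\sigma\le s$. First remove the Dirichlet datum: pick $\Psi_0(x,z)=\chi(z)e^{z\langle D_x\rangle}\psi(x)$ with $\chi$ supported near $z=0$, so $\Psi_0\in H^{\sigma+1}(\cS)$, $\Psi_0|_{z=0}=\psi$, $\Psi_0\equiv0$ near $z=-1$; replacing $v$ by $v-\Psi_0$, the datum becomes $0$ and the equation acquires a source $F=-\div(A\na\Psi_0)\in H^{\sigma-1}(\cS)$ bounded by $C\|\psi\|_{H^{\sigma+\f12}}$. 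For a tangential multi‑index $\al$ with $|\al|\le\sigma$, $w=\pa_x^\al v$ solves $\div(A\na w)=\pa_x^\al F-\div\big([\pa_x^\al,A]\na v\big)$ with zero Dirichlet datum and $\pa_z w|_{z=-1}=0$; pairing with $w$, integrating over $\cS$ (boundary terms vanish by the two conditions) and using uniform ellipticity gives $\|\na_{x,z}\pa_x^\al v\|_{L^2(\cS)}\le C(c_0,\|f\|_{H^{s+\f12}})\|\psi\|_{H^{\sigma+\f12}}$, once the commutator $[\pa_x^\al,A]\na v$ is estimated in $H^{-1}(\cS)$ using that $H^{s-\f12}$ is an algebra ($s-\f12>d/2$) and the inductive bound $\|v\|_{H^\sigma}\lesssim\|\psi\|_{H^{\sigma+\f12}}$. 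This controls all derivatives of $v$ of order $\le\sigma+1$ containing at most one $\pa_z$. To finish, solve the equation for the purely normal part, $A_{d+1,d+1}\pa_z^2v=-\sum_{(j,k)\neq(d+1,d+1)}A_{jk}\pa_j\pa_kv-(\na\cdot A)\cdot\na v$, and peel off the $\pa_z$‑derivatives one at a time (using $A_{d+1,d+1}\ge c>0$), reducing at each step to $\le1$ normal derivative, to reach $\|\pa_z^{\sigma+1}v\|_{L^2}$ and hence $v\in H^{\sigma+1}(\cS)$ with the claimed bound. Rigor of the energy identities is secured by first running the argument on difference quotients or on a smoothed $f$ and passing to the limit.

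\textbf{Main obstacle.} The delicate point is the half‑derivative mismatch: the naive stretch $\rho(x,z)=(1+z)(1+f(x))-1$ yields coefficients only in $H^{s-\f12}$, which makes the commutator and product estimates borderline precisely at $\sigma=s$. This is exactly why $\rho_\delta$ is used instead — away from $z=0$ its gradient is arbitrarily smooth (so the top‑order commutators are harmless there), while near $z=0$ the loss is matched by the regularity $\psi\in H^{\sigma+\f12}$ of the boundary datum; this trade‑off is the content of the proof of Proposition~\ref{prop:elliptic}, which one invokes for the endpoint case. Handling the conormal condition at $z=-1$ in the $\pa_z$‑recursion and the passage through difference quotients are routine.
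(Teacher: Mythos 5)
Your route — flatten with $\rho_\delta$, reduce to a divergence-form elliptic problem on the fixed strip, and close an induction on integer $\sigma$ via tangential commutator estimates plus normal-derivative peeling — is a correct and self-contained way to obtain the estimate, and it is consistent with what the paper intends: the paper offers no proof at all (it just labels the estimate ``classical'' and points to the proof of Proposition~\ref{prop:elliptic}). You correctly isolate the only genuinely delicate point, namely that the flattened coefficients live in $L^\infty_zH^{s-\f12}$, so the top-order commutator at $\sigma=s$ runs a half-derivative short, and you defer that endpoint to Proposition~\ref{prop:elliptic}. Two remarks. First, your sub-endpoint commutator step, phrased as ``estimated in $H^{-1}$ using that $H^{s-\f12}$ is an algebra,'' is stated too casually: the worst term $\pa_x^\alpha A\cdot\nabla v$ with $|\alpha|=\sigma$ is a product of a factor in $H^{s-\f12-\sigma}$ (negative order for $\sigma$ near $s$) with one in $H^{\sigma-1}$, and closing this in $H^{-1}$ in the critical window $\f d2+\f12<s\le \f d2+1$ requires the full Sobolev product rule with the slack $s-\f32-\f d2>-1$, not merely the algebra property. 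In the paper's actual application ($d=2$, $s\ge3$) this is comfortably non-borderline, so nothing breaks, but the phrasing hides a nontrivial check. Second, the shortest route — and plausibly the one the authors had in mind with their pointer — is to invoke Proposition~\ref{prop:elliptic} with $\sigma'=\sigma-\f12\in[-\f12,s-\f12]$ from the outset: this already controls $\nabla_{x,z}\Psi$ in $L^2_zH^{\sigma}$ with the claimed bound for \emph{every} integer $\sigma\le s$, after which the equation gives the higher $\pa_z$-derivatives exactly by your peeling step, and Lemma~\ref{lem:basic} transfers back to $\cS_f$. Both routes reach the same conclusion; yours is the more classical and self-contained one below the endpoint, the other uses the parabolic-decoupling machinery uniformly and is shorter overall.
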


In order to estimate the Dirichlet-Neumann(DN)  operator, we need to establish a different form of elliptic estimate.  We will follow the method introduced by Alazard, Burq and Zuily \cite{ABZ}. The idea is to transform the elliptic estimate into the parabolic estimate by decoupling the elliptic equation  into a forward and a backward parabolic evolution equation.

We first flatten the strip $\cS_f$ by a regularized mapping
\beno
(x,z)\in S\triangleq\T^2\times I\longmapsto (x,\rho_\delta(x,z))\in\cS_f,
\eeno
where $I=[-1,0]$ and $\rho_\delta$ with $\delta>0$ is given by
\begin{eqnarray}\label{mapping}
\rho_\delta(x,z)=z+(1+z)e^{\delta z|D|}f(x).
\end{eqnarray}
It is easy to verify that there exists $\delta>0$ depending on $c_0$ and $\|f\|_{W^{1,\infty}}$ so that
\ben
\pa_z\rho_\delta(x,z)\ge \f {c_0} 2\quad \text{for}\quad (x,z)\in S.
\een
We denote
\beno
\Psi(x,z)\triangleq\Phi(x,\rho_\delta(x,z)),\quad \Delta=\Delta_x,\quad \na=\na_x.
\eeno
Then $\Psi(x,z)$ satisfies
\begin{equation}\label{eq:elliptic-flat}
\left\{\begin{array}{l}
\pa^2_z\Psi+\al \Delta\Psi+\beta\cdot\nabla \pa_z\Psi-\gamma \pa_z \Psi=0,\\
\Psi(x,0)=\psi,\quad \pa_z\Psi(x,-1)=0.
\end{array}\right.
\end{equation}
where the coefficients $\al,\beta,\gamma$ are given by
\beno
\alpha=\f{(\pa_z \rho_\delta)^2}{1+|\nabla \rho_\delta|^2},\quad \beta=-2\f{\pa_z\rho_\delta \nabla \rho_\delta}{1+|\nabla \rho_\delta|^2},
\quad\gamma=\f{1}{\pa_z\rho_\delta}(\pa_z^2\rho_\delta+\alpha\Delta\rho_\delta+\beta\cdot\nabla\pa_z\rho_\delta).
\eeno

We introduce the following functional spaces:
\beno
&&X^\sigma(I)\eqdefa  {L}^\infty_{z}(I;H^\sigma(\mathbf{T}^d))\cap L^2_{z}(I;H^{\sigma+\f12}(\mathbf{T}^d)),\\
&&Y^\sigma(I)\eqdefa  {L}^1_{z}(I;H^\sigma(\mathbf{T}^d))+L^2_{z}(I;H^{\sigma-\f12}(\mathbf{T}^d)).
\eeno

\begin{proposition}\label{prop:elliptic}
Let $\Phi\in H^1(S_f)$ be a weak solution of (\ref{eq:elliptic}).
Assume that $f\in H^{s+\f12}(\T^d)$ for $s>\f d 2+\f12$.
Then for any $\sigma\in [-\f12, s-\f12]$, it holds that
\begin{align}\label{eq:ellip-app}
&\|\nabla_{x,z}\Psi\|_{X^{\sigma}(I)}\leq C\big(c_0, \|f\|_{H^{s+\f12}}\big)\|\psi\|_{H^{\sigma+1}}.
\end{align}
\end{proposition}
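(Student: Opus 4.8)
The plan is to follow the method of Alazard--Burq--Zuily, decoupling the second order equation \eqref{eq:elliptic-flat} into two first order paradifferential evolutions of parabolic type and propagating regularity by induction on $\sigma$, starting from the energy bound $\|\Psi\|_{H^1(\cS_f)}\le C\|\psi\|_{H^{1/2}}$ recalled before the statement. First I would record the regularity of the coefficients $\al,\be,\ga$ of \eqref{eq:elliptic-flat}: since $\rho_\delta(x,z)=z+(1+z)e^{\delta z|D|}f(x)$ and $e^{\delta z|D|}$ is smoothing for $z<0$, one has $\pa_z\rho_\delta\ge c_0/2$ and $\na_{x,z}(\rho_\delta-z)\in X^{s-\f12}(I)$ with norm $\le C(c_0,\|f\|_{H^{s+\f12}})$, whence $\al-\underline{\al},\,\be\in X^{s-\f12}(I)$ while $\ga\in X^{s-\f32}(I)$ (one derivative worse, coming from the second derivatives of $\rho_\delta$), using the product rules in the $X^\sigma$ spaces.

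Next I would factor the principal symbol $X^2+i(\be\cdot\xi)X-\al|\xi|^2$ of \eqref{eq:elliptic-flat} as $(X-a)(X-A)$ with
\[
A=\tfrac12\big(-i\be\cdot\xi+\sqrt{4\al|\xi|^2-(\be\cdot\xi)^2}\,\big),\qquad a=\tfrac12\big(-i\be\cdot\xi-\sqrt{4\al|\xi|^2-(\be\cdot\xi)^2}\,\big).
\]
The crucial algebraic identity is $4\al|\xi|^2-(\be\cdot\xi)^2=\f{4(\pa_z\rho_\delta)^2}{(1+|\na\rho_\delta|^2)^2}\big[(1+|\na\rho_\delta|^2)|\xi|^2-(\na\rho_\delta\cdot\xi)^2\big]\ge c\langle\xi\rangle^2$, so that $\mathrm{Re}\,A\ge c\langle\xi\rangle$, $\mathrm{Re}\,a\le-c\langle\xi\rangle$, and $A,a\in\Gamma^1_\e$ uniformly in $z$. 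Using the symbolic calculus of Proposition~\ref{prop:symcal} together with the coefficient bounds of the previous step, I would rewrite \eqref{eq:elliptic-flat} as $(\pa_z-T_a)(\pa_z-T_A)\Psi=F$, where $F$ collects the paralinearization errors $(\al-T_\al)\Delta\Psi$, $(\be-T_\be)\cdot\na\pa_z\Psi$, $-\ga\pa_z\Psi$ and the operator-composition errors, and satisfies $\|F\|_{Y^\sigma(I)}\le C(c_0,\|f\|_{H^{s+\f12}})\|\na_{x,z}\Psi\|_{X^{\sigma-1}(I)}$ by Lemma~\ref{lem:bony} and Proposition~\ref{prop:symcal}.

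I would then run the two parabolic estimates. Setting $v=(\pa_z-T_A)\Psi$, the equation $(\pa_z-T_a)v=F$ with $\mathrm{Re}\,a\le-c\langle\xi\rangle$ is stable in the direction of increasing $z$; solving it from $z=-1$, where the Neumann condition gives $v|_{z=-1}=-T_A\Psi|_{z=-1}$ (bounded by $\|\psi\|_{H^{1/2}}$ via interior elliptic regularity, the coefficients being smooth near $z=-1$), and using the maximal $L^2_z$-regularity of such problems, yields $v\in X^\sigma(I)$ controlled by $\|F\|_{Y^\sigma(I)}+\|\psi\|_{H^{1/2}}$. Next $\pa_z\Psi=T_A\Psi+v$ with $\mathrm{Re}\,A\ge c\langle\xi\rangle$ is stable in the direction of decreasing $z$; solving it from $\Psi|_{z=0}=\psi$ gives $\na_{x,z}\Psi\in X^\sigma(I)$ with $\|\na_{x,z}\Psi\|_{X^\sigma(I)}\le C(c_0,\|f\|_{H^{s+\f12}})\big(\|\psi\|_{H^{\sigma+1}}+\|F\|_{Y^\sigma(I)}\big)$. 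For $\sigma=-\f12$ the term $\|F\|_{Y^{-1/2}(I)}$ only needs $\|\na_{x,z}\Psi\|_{X^{-3/2}(I)}\le C\|\Psi\|_{H^1(\cS_f)}\le C\|\psi\|_{H^{1/2}}$, giving the base case; assuming the estimate at level $\sigma-1\ge-\f12$ turns the remainder bound into $\|F\|_{Y^\sigma(I)}\le C\|\psi\|_{H^\sigma}\le C\|\psi\|_{H^{\sigma+1}}$, and the induction closes, reaching $\sigma=s-\f12$ in finitely many steps.

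The main obstacle is the degeneracy in regularity of $\al,\be,\ga$ as $z\uparrow 0$: they are only of spatial regularity $H^{s-\f12}$ (resp.\ $H^{s-\f32}$) at the top, which rules out classical elliptic regularity for \eqref{eq:elliptic}; the whole point of the $X^\sigma$/$Y^\sigma$ framework and of the parabolic decoupling is to absorb this loss through the half-derivative gain in $L^2_z$ enjoyed by parabolic equations. The real work is thus the bookkeeping of $F$ in $Y^\sigma(I)=L^1_zH^\sigma(\T^2)+L^2_zH^{\sigma-1/2}(\T^2)$ --- matching, at each regularity level, this structure against products of the limited-regularity coefficients with $\na_{x,z}\Psi\in X^{\sigma-1}(I)$ --- which is precisely what forces the step-by-step induction on $\sigma$ rather than a one-shot argument.
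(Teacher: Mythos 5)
Your proposal follows essentially the same route as the paper's proof: paralinearize and factor the flattened equation as $(\pa_z-T_a)(\pa_z-T_A)\Psi=F$ via Proposition~\ref{prop:symcal}, run the forward parabolic estimate for $W=(\pa_z-T_A)\Psi$ from $z=-1$ (using the Neumann condition and interior regularity to control the initial value $W(-1)=-T_A\Psi|_{z=-1}$), then the backward parabolic estimate for $\Psi$ from $z=0$, and induct on $\sigma$ up to $s-\f12$. The decoupling, the identification of the elliptic symbols $a,A$, the directions of integration, and the overall induction architecture all match the paper.

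The one real inaccuracy is the claimed remainder bound $\|F\|_{Y^\sigma(I)}\le C\|\na_{x,z}\Psi\|_{X^{\sigma-1}(I)}$ and the accompanying induction in steps of size one. Under the assumption $f\in H^{s+\f12}$, the symbols satisfy only $a,A\in\Gamma^1_\e$ with $\e$ bounded by the Sobolev--H\"older gap $s-\f12-\f d2$, which can be arbitrarily small as $s\downarrow\f d2+\f12$. The symbolic calculus error $T_aT_A-T_{a\#A}$ therefore gains only $\e$ derivatives, not one, and likewise the paraproduct remainders $(\al-T_\al)\Delta\Psi$, $(\be-T_\be)\cdot\na\pa_z\Psi$ with $\al,\be$ of spatial regularity $H^{s-\f12}$ gain only $\e$ over $\na_{x,z}\Psi$. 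The correct statement is the paper's Lemma~\ref{lem:F-est}: $\|F\|_{Y^{\sigma+\e}(I)}\le C\|\na_{x,z}\Psi\|_{X^{\sigma}(I)}$ for $\sigma\in[-\f12,\,s-\f12-\e]$, so the induction must advance in steps of size $\e$, not $1$. This does not change the architecture --- the induction still closes in finitely many (more) steps --- but as written your remainder estimate is false near the top of the range, precisely where the conclusion is sharpest, so the fix (replacing the full-derivative gain by an $\e$-gain and shrinking the induction step accordingly) is essential, not cosmetic.
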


To prove the proposition, we first paralinearize the elliptic equation (\ref{eq:elliptic-flat}) as
\ben\label{eq:elliptic-pl}
\pa^2_z\Psi+T_\al\Delta\Psi+T_\beta\cdot\nabla \pa_z\Psi=F_1+F_2,
\een
where $F_1, F_2$ are given by
\beno
F_1=\gamma \pa_z\Psi,\quad F_2=(T_\al-\al)\Delta\Psi+(T_\beta-\beta)\cdot\nabla \pa_z \Psi.
\eeno
Then we decouple the equation (\ref{eq:elliptic-pl}) into a forward and a backward parabolic
evolution equations:
\begin{eqnarray}\label{eq:elliptic-decouple}
(\pa_z-T_a)(\pa_z-T_A)\Psi=F_1+F_2+F_3\triangleq F,
\end{eqnarray}
where
\beno
&&a=\f12\big(-i\beta\cdot\xi-\sqrt{4\al|\xi|^2-(\beta\cdot\xi)^2}\big),\\
&&A=\f12\big(-i\beta\cdot\xi+\sqrt{4\al|\xi|^2-(\beta\cdot\xi)^2}\big),\\
&&F_3=(T_aT_A-T_\al\Delta)\Psi-(T_a+T_A+T_\beta\cdot\nabla )\pa_z\Psi-T_{\pa_z A}\Psi.
\eeno

We denote by $\Gamma_r^m(I\times \T^d)$ the space of symbols $a(z;x,\xi)$ satisfying
\beno
{\cM}^m_r(a)\eqdefa\sup_{z\in I}\sup_{|\alpha|\leq \f d 2+1+r}\sup_{|\xi|\geq1/2} \|(1+|\xi|)^{|\alpha|-m}\pa^\alpha_\xi a(z;\cdot,\xi)\|_{W^{r,\infty}}
<+\infty.
\eeno
It is easy to verify that if $f\in H^{s+\f12}(\T^d)$ for $s>\f d2+\f12$, then $a, A\in {\cM}^1_{\e}$ for some $\e>0$ with the bound
\beno
{\cM}^1_{\e}(a)+{\cM}^1_{\e}(A)\le C\big(c_0, \|f\|_{H^{s+\f12}}\big).
\eeno

The following lemma was basically proved in \cite{ABZ}.

\begin{lemma}\label{lem:parabolic}
Let $a\in \Gamma^1_\e(I\times \T^d)$ for some $\e>0$ be elliptic
in the sense that there exists $c_1>0$ such that for any $z\in I, (x,\xi)\in \T^2\times \R^2$,
\beno
\textrm{Re}\,a(z;x,\xi)\geq c_1|\xi|.
\eeno
Consider the parabolic evolution equation
\ben\label{eq:parabolic evolution}
\pa_z w+T_a w=g,\quad w|_{z=z_0}=w_0.
\een
If $w_0\in H^\sigma$ and $g\in Y^\sigma(I)$ for $\sigma\in \R$,
then there exists a unique solution $w\in X^\sigma(I)$ of (\ref{eq:parabolic evolution}) satisfying
\beno
\|w\|_{X^\sigma(I)} \leq C\big(c_1, {\cM}^1_{\e}(a)\big)\big( \|w_0\|_{H^\sigma}+\|g\|_{Y^\sigma(I)}\big).
\eeno
\end{lemma}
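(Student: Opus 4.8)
The plan is to establish the a priori bound by a paradifferential parabolic energy argument, and then deduce existence and uniqueness from it in the standard way. I would work directly with $w_\sigma\triangleq\langle D\rangle^\sigma w$. Applying $\langle D\rangle^\sigma$ to \eqref{eq:parabolic evolution} and using the equation gives
\begin{align*}
\pa_z w_\sigma+T_a w_\sigma=\langle D\rangle^\sigma g+\big[T_a,\langle D\rangle^\sigma\big]w\triangleq g_\sigma,\qquad w_\sigma|_{z=z_0}=\langle D\rangle^\sigma w_0 .
\end{align*}
Since $a\in\Gamma^1_\e(I\times\T^d)$ with $\e>0$, Proposition \ref{prop:symcal} shows that $[T_a,\langle D\rangle^\sigma]$ has order $\sigma+1-\e$ uniformly in $z$, so that
\begin{align*}
\big\|[T_a,\langle D\rangle^\sigma]w\big\|_{H^{-\f12}}\le C\,{\cM}^1_\e(a)\,\|w\|_{H^{\sigma+\f12-\e}}\le \eta\|w_\sigma\|_{H^{\f12}}+C_\eta\|w_\sigma\|_{L^2}
\end{align*}
for every $\eta>0$, by interpolation between $H^\sigma$ and $H^{\sigma+\f12}$. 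The main analytic ingredient is a G\aa rding inequality for $T_a$: from $\mathrm{Re}\,a\ge c_1|\xi|$ one factors $2\,\mathrm{Re}\,a=\big(\sqrt{2\,\mathrm{Re}\,a}\,\big)^2$ with $\sqrt{2\,\mathrm{Re}\,a}\in\Gamma^{\f12}_\e$ elliptic, and combines $T_a+T_a^*=T_{2\,\mathrm{Re}\,a}+(\text{order }1-\e)$, $T_{\sqrt{2\,\mathrm{Re}\,a}}^{*}T_{\sqrt{2\,\mathrm{Re}\,a}}=T_{2\,\mathrm{Re}\,a}+(\text{order }1-\e)$ and an elliptic parametrix for $T_{\sqrt{2\,\mathrm{Re}\,a}}$ — all consequences of Proposition \ref{prop:symcal} — to obtain, uniformly in $z$,
\begin{align*}
\mathrm{Re}\,(T_a\phi,\phi)_{L^2}\ge\tfrac{c}{4}\|\phi\|_{H^{\f12}}^2-C\|\phi\|_{L^2}^2,\qquad c=c(c_1),\ C=C(c_1,{\cM}^1_\e(a)).
\end{align*}

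Next I would run the energy estimate. Pairing the equation for $w_\sigma$ with $w_\sigma$ in $L^2(\T^d)$, using the G\aa rding inequality, and splitting $g=g_1+g_2$ with $g_1\in L^1_z(I;H^\sigma)$ and $g_2\in L^2_z(I;H^{\sigma-\f12})$ so that $|(\langle D\rangle^\sigma g_1,w_\sigma)|\le\|g_1\|_{H^\sigma}\|w_\sigma\|_{L^2}$ and $|(\langle D\rangle^\sigma g_2,w_\sigma)|\le\|g_2\|_{H^{\sigma-\f12}}\|w_\sigma\|_{H^{\f12}}$, and treating the commutator contribution to $g_\sigma$ by the interpolation bound above, one chooses $\eta$ small and applies Young's inequality to absorb all $\|w_\sigma\|_{H^{\f12}}^2$–terms into the G\aa rding term, arriving at
\begin{align*}
\frac{d}{dz}\|w_\sigma\|_{L^2}^2+c\,\|w_\sigma\|_{H^{\f12}}^2\le C\|w_\sigma\|_{L^2}^2+C\|g_1\|_{H^\sigma}\|w_\sigma\|_{L^2}+C\|g_2\|_{H^{\sigma-\f12}}^2 .
\end{align*}
Integrating from $z_0$, setting $N\triangleq\sup_{I}\|w_\sigma(z)\|_{L^2}$, using $C\|g_1\|_{L^1_zH^\sigma}N\le\tfrac12 N^2+C\|g_1\|_{L^1_zH^\sigma}^2$, and then Gronwall's inequality on the remaining $\int\|w_\sigma\|_{L^2}^2$, yields
\begin{align*}
\sup_I\|w_\sigma\|_{L^2}^2+\int_I\|w_\sigma\|_{H^{\f12}}^2\le C\big(c_1,{\cM}^1_\e(a)\big)\big(\|w_0\|_{H^\sigma}+\|g\|_{Y^\sigma(I)}\big)^2 ,
\end{align*}
which is exactly $\|w\|_{X^\sigma(I)}\le C(\|w_0\|_{H^\sigma}+\|g\|_{Y^\sigma(I)})$.

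For existence I would regularize, using for instance the spectrally truncated operator $S_NT_aS_N$, so that \eqref{eq:parabolic evolution} becomes a linear ODE on the finite-dimensional space $S_NL^2$ — or else a viscous approximation — solve it, note that the estimate above is uniform in the regularization parameter, and pass to the limit by weak-$\ast$ compactness in $X^\sigma(I)$, upgrading the convergence through the equation; the limit solves \eqref{eq:parabolic evolution} with the claimed bound. Uniqueness is immediate from the a priori estimate applied to the difference of two solutions, which satisfies the equation with $w_0=0$ and $g=0$.

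I expect the G\aa rding inequality to be the delicate point: since the symbol $a$ is only $W^{\e,\infty}$ in $x$, one has to verify that the loss in the symbolic calculus of Proposition \ref{prop:symcal} is exactly $\e$ and that every remainder operator lands in $H^{(1-\e)/2}$ or $H^{\f12-\e}$ — spaces interpolating strictly between $H^{\f12}$ and $L^2$ — so that all errors can be absorbed; this is where the hypothesis $\e>0$ is used essentially. Everything else is the routine parabolic energy method.
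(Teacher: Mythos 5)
Your proof is correct and follows the approach of Alazard--Burq--Zuily \cite{ABZ}, which is exactly what the paper invokes: the paper does not prove the lemma itself but simply notes it ``was basically proved in \cite{ABZ}.'' Your argument — conjugating by $\langle D\rangle^\sigma$, controlling the commutator via the paradifferential symbolic calculus of Proposition~\ref{prop:symcal} with a gain of $\e$, establishing the microlocal G\aa rding inequality from $\mathrm{Re}\,a\geq c_1|\xi|$ by factoring through $T_{\sqrt{2\,\mathrm{Re}\,a}}$ together with an elliptic parametrix, and then running the parabolic energy/Gronwall estimate with $g$ split according to $Y^\sigma(I)=L^1_zH^\sigma+L^2_zH^{\sigma-\f12}$ — supplies the full details of that cited argument, and the existence/uniqueness bootstrap by Galerkin truncation and weak compactness is standard and sound.
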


Fix $\e>0$ so that $\e<\max\big(\f12, s-\f d2-\f12\big)$. We have the following regularity estimates for $F_i, i=1,2,3$, which can be proved by using Lemma \ref{lem:bony}(see \cite{ABZ}).

\begin{lemma}\label{lem:F-est}
For any $\sigma\in [-\f12,s-\f12-\e]$, it holds that
\beno
&&\|F_1\|_{Y^{\sigma+\e}(I)}\le C\big(c_0, \|f\|_{H^{s+\f12}}\big)\|\pa_z\Psi\|_{X^\sigma(I)},\\
&&\|F_2\|_{Y^{\sigma+\e}(I)}\le C\big(c_0, \|f\|_{H^{s+\f12}}\big)\|\na_{x,z}\Psi\|_{X^\sigma(I)},\\
&&\|F_3\|_{L^2_z(I;H^{\sigma-\f12+\e})}\le C\big(c_0, \|f\|_{H^{s+\f12}}\big)\|\na_{x,z}\Psi\|_{L^2_z(I;H^{\sigma+\f12})}.
\eeno
\end{lemma}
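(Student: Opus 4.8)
The three functions $F_1,F_2,F_3$ are the lower-order remainders left over when the flattened elliptic equation (\ref{eq:elliptic-flat}) is paralinearized and then decoupled into the two parabolic half-equations (\ref{eq:elliptic-decouple}). The plan is to estimate them termwise, following Alazard--Burq--Zuily \cite{ABZ}, the only tools being Bony's decomposition (\ref{Bony}), the product estimates of Lemma \ref{lem:bony}, and --- for $F_3$ --- the symbolic calculus of Proposition \ref{prop:symcal}. Everything hinges on a preliminary package of quantitative bounds for the coefficients $\al,\beta,\gamma$ and the symbols $a,A,\pa_zA$ in anisotropic spaces $C^0_z(I;H^\mu(\T^d))$ and $L^2_z(I;H^\mu(\T^d))$.

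First I would record those coefficient bounds. From the explicit regularized diffeomorphism $\rho_\delta(x,z)=z+(1+z)e^{\delta z|D|}f(x)$ in (\ref{mapping}), the lower bound $\pa_z\rho_\delta\ge c_0/2$, the elementary trace/smoothing gain $\int_{-1}^{0}|\xi|^{2k}e^{2\delta z|\xi|}\,dz\le C_k\langle\xi\rangle^{2k-1}$ for the Poisson-type semigroup $e^{\delta z|D|}$, and Lemma \ref{lem:bony} for the products, one obtains: the parts of $\na_{x,z}\rho_\delta$ that do not cost a $z$-derivative lie in $C^0_z H^{s-1/2}$, while the top-order derivatives $\pa_z^2\rho_\delta$, $\na\pa_z\rho_\delta$, $\Delta\rho_\delta$ gain half a derivative in exchange for square-integrability in $z$ and lie in $L^2_z H^{s-1}$, all with norms bounded by $C(c_0,\|f\|_{H^{s+1/2}})$. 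Hence (schematically, modulo bounded smooth functions) $\al,\beta\in C^0_z H^{s-1/2}\cap L^2_z H^{s}$ and $\gamma\in C^0_z H^{s-3/2}+L^2_z H^{s-1}$, the symbols $a,A$ belong to $L^\infty_z\Gamma^1_\e(I\times\T^d)$ with $\cM^1_\e(a)+\cM^1_\e(A)\le C(c_0,\|f\|_{H^{s+1/2}})$, and $\pa_zA$ is an order-one symbol lying in $L^2_z\Gamma^1_0$, the extra $z$-derivative being paid for by the $L^2_z$-integrability above.

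With this in hand the three estimates are routine. For $F_1=\gamma\pa_z\Psi$ one Bony-decomposes $\gamma\pa_z\Psi=T_\gamma\pa_z\Psi+T_{\pa_z\Psi}\gamma+R(\gamma,\pa_z\Psi)$ and uses Lemma \ref{lem:bony}, placing the $C^0_z$-part of $\gamma$ against $\pa_z\Psi\in L^2_zH^{\sigma+1/2}$ and the $L^2_z$-part of $\gamma$ against $\pa_z\Psi$ again ($L^2_z\cdot L^2_z\subset L^1_z$); since $Y^{\sigma+\e}(I)$ contains both $L^2_zH^{\sigma+\e-1/2}$ and $L^1_zH^{\sigma+\e}$, each piece falls into the right slot. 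For $F_2=(T_\al-\al)\Delta\Psi+(T_\beta-\beta)\cdot\na\pa_z\Psi$ one exploits the standard fact that the operator $b-T_b$ gains precisely the $\e$-H\"older (hence Sobolev) regularity of its coefficient $b$ --- this is again (\ref{Bony}) plus Lemma \ref{lem:bony} --- applied with $b=\al$ or $\beta$ and with $\Delta\Psi,\ \na\pa_z\Psi\in L^2_zH^{\sigma-1/2}$, so that the bracket gains $\e$ over the elliptic scaling and lands in $Y^{\sigma+\e}(I)$. For $F_3=(T_aT_A-T_\al\Delta)\Psi-(T_a+T_A+T_\beta\cdot\na)\pa_z\Psi-T_{\pa_zA}\Psi$ one invokes Proposition \ref{prop:symcal} together with the algebraic identities built into the decoupling, $a+A=-i\beta\cdot\xi$ and (since $\e<1$) $a\#A=aA=-\al|\xi|^2$: these show that the first bracket is an operator of order $2-\e$, the second an operator of order $1-\e$ acting on $\pa_z\Psi$, and the last the order-one operator $T_{\pa_zA}$ with symbol in $L^2_z\Gamma^1_0$; each, applied to $\na_{x,z}\Psi\in L^2_zH^{\sigma+1/2}$, lands in $L^2_zH^{\sigma-1/2+\e}$ as claimed.

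I expect the genuine work --- and the only subtle point --- to be the first step: proving the anisotropic coefficient bounds with the correct trade-off (half an $x$-derivative less than $f$ near $z=0$, repaid by $L^2$-integrability in $z$ with a gain coming from $e^{\delta z|D|}$), and then organizing the paraproducts so that this $L^2_z$-gain is deposited in the appropriate component of $Y^{\sigma+\e}(I)$, keeping careful track of which factor should carry the $L^\infty_z$ versus the $L^2_z$ norm and the high versus the low $x$-Sobolev index. Once those bounds --- and, for $F_3$, the symbolic identities above --- are secured, the remainder is bookkeeping with Bony's calculus exactly as in \cite{ABZ}.
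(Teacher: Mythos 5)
Your sketch follows exactly the route the paper itself indicates: the paper gives no proof of Lemma \ref{lem:F-est}, stating only that it ``can be proved by using Lemma \ref{lem:bony} (see \cite{ABZ})'', and your outline --- anisotropic $C^0_zH^\mu$ / $L^2_zH^\mu$ bounds for $\alpha,\beta,\gamma$ and for $a,A,\partial_zA$ coming from the smoothing of $e^{\delta z|D|}$ in (\ref{mapping}), Bony's decomposition plus Lemma \ref{lem:bony} for $F_1,F_2$, and Proposition \ref{prop:symcal} together with the identities $a+A=-i\beta\cdot\xi$, $aA=-\alpha|\xi|^2$ and the low-frequency cut-off comparison between $T_{-i\beta\cdot\xi}$ and $T_\beta\cdot\nabla$ for $F_3$ --- is precisely the Alazard--Burq--Zuily argument being invoked. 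The structure and algebraic identities are correct, and you rightly flag that the substantive work is in establishing and then deploying the coefficient bounds; a fully rigorous write-up would need to pin those down carefully, but the approach is the intended one.
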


Now we are in position to prove Proposition \ref{prop:elliptic}.
\begin{proof}
The proof uses the induction argument. First of all, it is easy to verify that the inequality is true for $\sigma=-\f12$.
Let us assume that it is also true for $\sigma\in \big(-\f12,s-\f12-\e\big]$. Thus, it suffices to show that
\begin{align}\label{eq:ellip-app-1}
&\|\nabla_{x,z}\Psi\|_{X^{\sigma+\e}(I)}\leq C\big(c_0, \|f\|_{H^{s+\f12}}\big)\|\psi\|_{H^{\sigma+1+\e}}\quad\text{for}\quad \sigma\in \big(-\f12,s-\f12-\e\big].\end{align}
Using a localization argument as in Lemma 2.8  in \cite{ABZ-D}, it can be proved that
\ben\label{eq:Psi-bot}
\|\Psi(x,-1)\|_{H^{s+\f12}}\le C(c_0)\|\psi\|_{H^\f12}.
\een
Set $W=(\pa_z-T_A)\Psi$, which satisfies
\beno
\pa_zW-T_aW=F,\quad W(-1)=-T_A\Psi|_{z=-1}.
\eeno
Using the induction assumption and (\ref{eq:Psi-bot}), it follows from Lemma \ref{lem:parabolic} and Lemma \ref{lem:F-est} that
\begin{align}\label{eq:W-est}
\|W\|_{X^{\sigma+\e}(I)}\leq& C\big(c_0, \|f\|_{H^{s+\f12}}\big)\Big(\|W(-1)\|_{H^{\sigma+\e}}+\|F\|_{Y^{\sigma+\e}(I)}\Big)\nonumber\\
\le& C\big(c_0, \|f\|_{H^{s+\f12}}\big)\Big(\|T_A\Psi|_{z=-1}\|_{H^{\sigma+\e}}+\|\nabla_{x,z}\Psi\|_{X^{\sigma}(I)}\Big)\nonumber\\
\le& C\big(c_0, \|f\|_{H^{s+\f12}}\big)\|\psi\|_{H^{\sigma+1}}.
\end{align}

We next consider the backward parabolic equation
\beno
\pa_z\Psi-T_A\Psi=W,\quad \Psi|_{z=0}=\psi.
\eeno
It follows from Lemma \ref{lem:parabolic} and (\ref{eq:W-est}) that
\begin{align}
\|\Psi\|_{X^{1+\sigma+\e}(I)}\leq& C\big(c_0, \|f\|_{H^{s+\f12}}\big)\big(\|\Psi\|_{H^{\sigma+1+\e}}+\|W\|_{L^2_z(I;H^{\sigma+\f12+\e})}\big)\nonumber\\
\le& C\big(c_0, \|f\|_{H^{s+\f12}}\big)\|\psi\|_{H^{\sigma+1+\e}}.\nonumber
\end{align}
Using $\pa_z\Psi=T_A\Psi+W$, we get by Proposition \ref{prop:symcal} that
\beno
\|\pa_z\Psi\|_{X^{\sigma+\e}(I)}\le C\big(c_0, \|f\|_{H^{s+\f12}}\big)\|\psi\|_{H^{\sigma+1+\e}}.
\eeno
This proves (\ref{eq:ellip-app-1}).
\end{proof}

\section*{Acknowledgment}
Wei Wang is partially supported by NSF of China under Grant 11501502.
Zhifei Zhang is partially supported by NSF of China under Grant
11371037 and 11425103.


\begin{thebibliography}{99}

\bibitem{ABZ-D} T. Alazard, N. Burq and C. Zuily, {\it On the water-wave equations with surface tension}, Duke Math. J. ,158 (2011), 413-499.

 \bibitem{ABZ} T. Alazard, N. Burq and C. Zuily, {\it On the Cauchy problem for gravity water waves},
 Invent. Math.,  198(2014), 71-163.

\bibitem{AM} T. Alazard and G. M\'{e}tivier, {\it Paralinearization of the Dirichlet to Neumann operator, and regularity of three-dimensional water waves},
 Comm. Partial Differential Equations, 34(2009), 1632-1704.

\bibitem{Amb} D.~ M.~ Ambrose and N.  Masmoudi, {\it The zero surface tension limit of two-dimensional water waves}, Comm. Pure Appl. Math., 58(2005),  1287--1315.

\bibitem{Ax} W. I. Axford, {\it Note on a problem of magnetohydrodynamic stability}, Canad. J. Phys., 40(1962), 654-655.

\bibitem{BCD} H. Bahouri, J.~Y. Chemin and R. Danchin, {\it Fourier analysis and nonlinear partial differential equations}, Grundlehren der
mathematischen Wissenschaften 343, Springer-Verlag Berlin Heidelberg, 2011.


\bibitem{Chen} G.-Q. Chen and Y.-G Wang, {\it Existence and stability of compressible current-vortex sheets in three-dimensional magnetohydrodynamics},
 Arch. Ration. Mech. Anal., 187(2008), 369-408.

\bibitem{CS1} C. A. Cheng and S. Shkoller,  {\it Solvability and regularity for an elliptic system prescribing the curl, divergence, and partial trace of a
                vector field on sobolev-class domains}, arxiv:1408.2469v1.

\bibitem{Cou1}  J. -F. Coulombel and P. Secchi, {\it The stability of compressible vortex sheets in two space dimensions},
 Indiana Univ. Math. J., 53(2004), 941-1012.

\bibitem{Cou2} J. -F. Coulombel and P. Secchi, {\it Nonlinear compressible vortex sheets in two space dimensions},
 Ann. Sci. \'{E}c. Norm. Sup\'{e}r.,  41(2008), 85-139.

\bibitem{Cou3} J. -F. Coulombel and P. Secchi, {\it Uniqueness of 2-D compressible vortex sheets},
 Comm. Pure Appl. Anal., 8(2009), 1439-1450.

\bibitem{CSH} D. Coutand and S. Shkoller, {\it Well-posedness of the free-surface incompressible Euler equations with or without
surface tension}, J. Amer. Math. Soc., 20(2007), 829--930.

\bibitem{CMST}J.-F. Coulombel, A. Morando, P. Secchi and P. Trebeschi, {\it A priori estimates for 3D incompressible current-vortex sheets},
              Comm. Math. Phys.,  311(2012), 247-275.

\bibitem{HL} C. Hao and T. Luo, {\it A priori estimates for free boundary problem of incompressible inviscid magnetohydrodynamic flows},
             Arch. Ration. Mech. Anal.,  212(2014), 805-847.

\bibitem{Lan} D. Lannes, {\it Well-posedness of the water-waves equations}, J. Amer. Math. Soc. 18 (2005), 605--654.

\bibitem{Lin} H. Lindblad, {\it Well-posedness for the motion of an incompressible liquid with free surface boundary},  Ann. Math., 162(2005), 109--194.

\bibitem{Maj} A. J. Majda and A. L. Bertozzi, {\it Vorticity and incompressible flow}, Cambridge Texts in Applied Mathematics, 27,
 Cambridge University Press, Cambridge, 2002.

\bibitem{Met} G. M$\acute{e}$tivier, {\it Para-differential calculus and applications to the Cauchy problem for nonlinear systems},
Centro di Ricerca Matematica Ennio De Giorgi (CRM) Series, 5. Edizioni della Normale, Pisa, 2008.

\bibitem{Mo1} A. Morando, Y. Trakhinin and P. Trebeschi, {\it Stability of incompressible current-vortex sheets},
 J. Math. Anal. Appl., 347(2008), 502-520.

\bibitem{Mo2}  A. Morando, Y. Trakhinin and P. Trebeschi, {\it Well-posedness of the linearized plasma-vacuum interface problem in ideal incompressible MHD},
 Quart. Appl. Math., 72(2014), 549-587.


\bibitem{Sec} P. Secchi and Y. Trakhinin, {\it Well-posedness of the plasma-vacuum interface problem},
             Nonlinearity, 27(2014), 105-169.

\bibitem{SZ1} J. ~Shatah and C. Zeng, {\it Geometry and  a priori estimates for
free boundary problems of the Euler's equation}, Comm. Pure Appl. Math., 61(2008), 698-744.

\bibitem{SZ2} J. ~Shatah and C. Zeng, {\it A priori estimates for fluid interface problems}, Comm. Pure Appl. Math., 61(2008), 848-876.

\bibitem{SZ3} J. ~Shatah and C. Zeng, {\it Local well-posedness for fluid interface problems}, Arch. Ration. Mech. Anal., 199(2011), 653-705.

\bibitem{Sy} S. I. Syrovatskij, {\it The stability of tangential discontinuities in a magnetohydrodynamic medium},
 Z. Eksperim. Teoret. Fiz., 24(1953), 622-629.

\bibitem{Tra-in1} Y. Trakhinin, {\it On the existence of incompressible current-vortex sheets: study of a linearized free boundary value problem},
           Math. Methods Appl. Sci.,  28(2005), 917-945.

\bibitem{Tra1} Y. Trakhinin, {\it Existence of compressible current-vortex sheets: variable coefficients linear analysis},
            Arch. Ration. Mech. Anal., 177(2005), 331-366.

\bibitem{Tra-r} Y. Trakhinin, {\it Existence and stability of compressible and incompressible current-vortex sheets},
 Analysis and simulation of fluid dynamics, 229-246, Adv. Math. Fluid Mech., Birkh\"{a}user, Basel, 2007.

\bibitem{Tra2} Y. Trakhinin, {\it The existence of current-vortex sheets in ideal compressible magnetohydrodynamics},
           Arch. Ration. Mech. Anal., 191(2009), 245-310.

\bibitem{WY}  Y.-G. Wang and F. Yu,  {\it Stabilization effect of magnetic fields on two-dimensional compressible current-vortex sheets}, Arch. Ration. Mech. Anal.,  208(2013), 341-389.

\bibitem{Wu1} S. Wu, {\it Well-posedness in Sobolev spaces of the full water wave problem in $2$-D},  Invent. Math., 130(1997), 39--72.

\bibitem{Wu2} S. Wu, {\it Well-posedness in Sobolev spaces of the full water wave problem in 3-D}, J. ~Amer. ~Math. ~Soc., 12(1999), 445-495.

\bibitem{ZZ} P. Zhang and Z. Zhang, {\it On the free boundary problem of  three-dimensional incompressible Euler equations},
Comm. Pure Appl. Math., 61(2008), 877--940.

\end{thebibliography}
\end{document}